\newtheorem{assumption}{Assumption}%
\def\cl@chapter{\@elt {chapter}}
\crefname{assumption}{Assumption}{Assumptions}
\newcommand{\R}{\mathbb{R}}
\newcommand{\C}{\mathbb{C}}
\newcommand{\F}{\mathcal{F}}
\newcommand{\Hc}{\mathcal{H}}
\newcommand{\sgn}{\mathrm{sgn}}
\newcommand\restr[2]{\ensuremath{\left.#1\right\vert_{#2}}}
\newcommand\abs[1]{\ensuremath{\left.\vert#1\right\vert}}
\newcommand\tensor{\,\tilde{\otimes}\,}
\journalname{}
\begin{document}

\title{Time-dependent Steklov--Poincar\'e operators and space-time Robin--Robin decomposition for the heat equation
\thanks{This work was supported by the Swedish Research Council under the grant 2019--05396.}
}
\thanks{This work was supported by the Swedish Research Council under the grant 2019--05396.}
\titlerunning{Space-time Robin--Robin method}

\author{Emil Engström        \and
             Eskil Hansen%
}

\institute{
  Emil Engström \at
  Centre for Mathematical Sciences, Lund University, P.O.\ Box 118, 221 00 Lund, Sweden \\
  \email{emil.engstrom@math.lth.se}
  \and
  Eskil Hansen \at
  Centre for Mathematical Sciences, Lund University, P.O.\ Box 118, 221 00 Lund, Sweden \\
  \email{eskil.hansen@math.lth.se}
}
\date{\today}
\maketitle

\begin{abstract}
Domain decomposition methods are a set of widely used tools for parallelization of partial differential equation solvers. Convergence is well studied for elliptic equations, but in the case of parabolic equations there are hardly any results for general Lipschitz domains in two or more dimensions. The aim of this work is therefore to construct a new framework for analyzing nonoverlapping domain decomposition methods for the heat equation in a space-time Lipschitz cylinder. The framework is based on a variational formulation, inspired by recent studies of space-time finite elements using Sobolev spaces with fractional time regularity. In this framework, the time-dependent Steklov--Poincaré operators are introduced and their essential properties are proven. We then derive the interface interpretations of the Dirichlet--Neumann, Neumann--Neumann and Robin--Robin methods and show that these methods are well defined. Finally, we prove convergence of the Robin--Robin method and introduce a modified method with stronger convergence properties.
\keywords{Steklov--Poincar\'e operator\and Robin--Robin method \and Nonoverlapping domain decomposition \and Space-time \and Convergence }
\subclass{ 65M55\and 65J08\and 35K20}
\end{abstract}

\newpage

\section{Introduction}\label{sec:intro}
Domain decomposition methods enable the usage of parallel and distributed hardware and are commonly employed when approximating the solutions to elliptic equations. The basic idea is to first decompose the equation's spatial domain into subdomains. The numerical method then consists of iteratively solving the elliptic equation on each subdomain and thereafter communicating the results via the boundaries to the neighboring subdomains. An in-depth survey of the topic can be found in the monographs~\cite{quarteroni,widlund}. 

A recent development in the field is to apply this approach to parabolic equations. The decomposition into spatial subdomains is then replaced by the decomposition into space-time cylinders. In general, space-time decomposition schemes enable additional parallelization and less storage requirements when combined with a standard numerical method for parabolic problems. The methods have especially gained attention in the contexts of parallel time integrators; surveyed in~\cite{gander15}, space-time finite elements; surveyed in~\cite{steinbach19}, and parabolic problems with a spatial domain given by a union of domains with very different material properties~\cite{japhet20,japhet16}.

There have been several studies concerning the convergence and other theoretical aspects of space-time decomposition schemes applied to parabolic equations on one-dimensional or rectangular spatial domains; see~\cite{gander07,kwok16,kwok21,keller02,lemarie13a,lemarie13b}. However, there are hardly any convergence results for more general settings, e.g., with spatial Lipschitz domains in higher dimensions. The one exception is the convergence analysis considered in~\cite{japhet13} for an optimized Schwarz waveform relaxation, where a method-specific variational approach is proposed. The latter is similar to the analysis used by Lions for the Robin--Robin method applied to elliptic equations~\cite{lions3}. 

This lack of convergence results is in stark contrast to the elliptic setting, where the analysis of nonoverlapping domain decomposition schemes for Lipschitz domains in higher dimensions is standard. The basic elliptic framework is to reformulate the method as an iteration scheme posed on the intersections of the subdomains in terms of Steklov--Poincar\'e operators. This is the basis for the analysis of the Dirichlet--Neumann and Neumann--Neumann methods~\cite{quarteroni}. The Steklov--Poincar\'e framework even yields the convergence of the Robin--Robin method applied to nonlinear elliptic equations~\cite{EHEE22}. Note that the time-dependent Steklov--Poincar\'e operators were introduced without any analysis in~\cite{japhet13}.  

Hence, the aim of this study is twofold. First, we strive to introduce a new variational framework for time-dependent Steklov--Poincar\'e operators and to derive the essential properties of these operators, e.g., coercivity and bijectivity. The latter property implies that the space-time generalizations of the Dirichlet--Neumann, Neumann--Neumann, and Robin--Robin methods are all well defined in the parabolic setting. Second, we will employ the new framework to give a first rigorous proof of the Robin--Robin method's convergence when applied to parabolic equations. 

For sake of simplicity, we will restrict our attention to the heat equation with homogeneous Dirichlet conditions. However, our analysis does not depend on any symmetry of the underlying bilinear form, and a similar framework can be used for more general parabolic problems and boundary conditions. To limit the technicalities further, we consider the equation for all times $t\in\R$, which allows a variational formulation with the same test and trial spaces. Our model problem is therefore the heat equation on the space-time cylinder $\Omega\times\R$, i.e.,
\begin{equation}\label{eq:strong}
\left\{
     \begin{aligned}
            (\partial_t-\Delta) u&=f  & &\text{in }\Omega\times\R,\\
             u&=0 & &\text{on }\partial\Omega\times\R,
        \end{aligned}
\right.
\end{equation}
where the spatial domain $\Omega\subset\R^d$, $d=2,3$, is bounded with boundary $\partial\Omega$. Note that a homogeneous initial condition at time $t_0$ can be incorporated by prescribing a source term $f$ that is zero for times $t<t_0$.

Next, we decompose the spatial domain $\Omega$ into nonoverlapping subdomains $\Omega_i$, $i=1,2$, with boundaries $\partial\Omega_i$, and denote the interface separating the subdomains $\Omega_i$ by $\Gamma$. That is, 
\begin{equation}\label{eq:domain}
\overline{\Omega}=\overline{\Omega}_1\cup\overline{\Omega}_2,\quad \Omega_1\cap\Omega_2=\emptyset,\quad\text{and}\quad\Gamma=(\partial\Omega_1\cap\partial\Omega_2)\setminus\partial\Omega. 
\end{equation}
The space-time cylinder $\Omega\times\R$ is thereby decomposed into $\Omega_i\times\R$, $i=1,2$, as illustrated in~\cref{fig:spacetimecylinder}. The current setting is also valid for spatial subdomains $\Omega_i$ given as unions of nonadjacent subdomains, i.e., 
\begin{displaymath}
\Omega_i=\cup_{\ell=1}^s\Omega_{i\ell}\quad\text{and}\quad
\overline{\Omega}_{i\ell}\cap\overline{\Omega}_{ij}=\emptyset\quad\text{for }\ell\neq j. 
\end{displaymath}

\begin{figure}
\centering
\includegraphics[width=.5\linewidth]{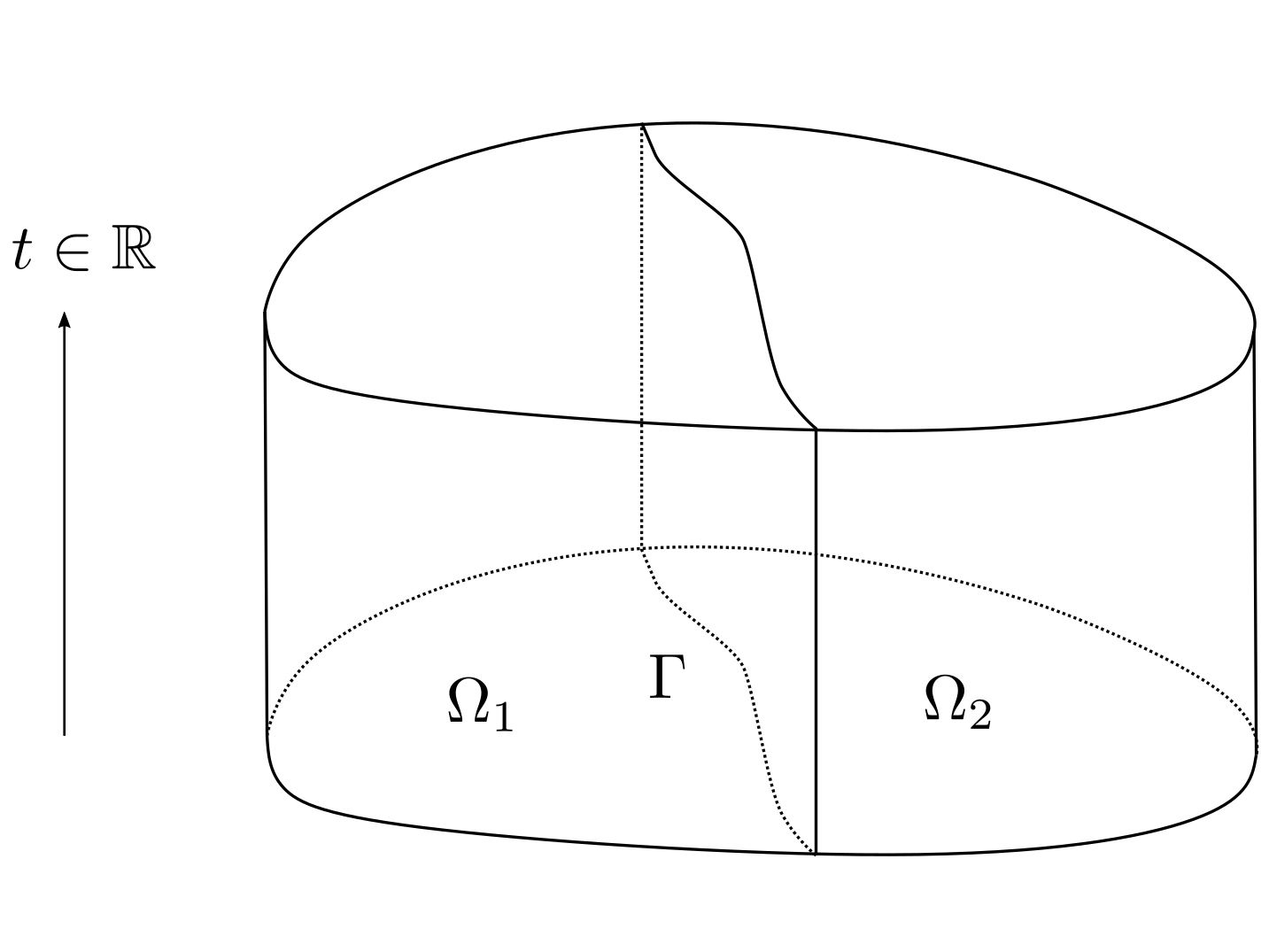}
\caption{The nonoverlapping decomposition of the space-time cylinder.}
\label{fig:spacetimecylinder}
\end{figure}%
With a fixed domain decomposition we can reformulate the heat equation as equations on $\Omega_i\times\R$, $i=1,2$, connected via transmission conditions on $\Gamma\times\R$. More precisely, we have the parabolic transmission problem
\begin{equation}\label{eq:TPStrong}
\left\{
     \begin{aligned}
            (\partial_t-\Delta) u_i&=f  & &\text{in }\Omega_i\times\R & &\text{ for }i=1,2,\\
             u_i&=0 & &\text{on }(\partial\Omega_i\setminus\Gamma)\times\R & &\text{ for }i=1,2,\\[5pt]
             u_1&=u_2  & &\text{on }\Gamma\times\R, & &\\
             \nabla u_1\cdot\nu_1 &= -\nabla u_2\cdot\nu_2  & &\text{on }\Gamma\times\R, & &
        \end{aligned}
\right.
\end{equation}
where $\nu_i$ denotes the unit outward normal vector of $\partial\Omega_i$. Alternating between the decomposed space-time cylinders and the transmission conditions generates the space-time generalizations of the Dirichlet--Neumann and Neumann--Neumann methods. As $\nu_1=-\nu_2$ on $\Gamma$, the transmission conditions are also equivalent to the Robin conditions 
\begin{displaymath}
\nabla u_1\cdot\nu_i+su_1=\nabla u_2\cdot\nu_i+su_2\quad\text{on }\Gamma\times\R\text{ for }i=1,2,
\end{displaymath}
where $s$ is a non-zero parameter. The latter reformulation gives rise to the space-time Robin--Robin method, for which we will prove convergence. The method is given by finding $(u^n_1,u^n_2)$ for $n=1,2,\ldots$ such that
\begin{equation}\label{eq:RobinStrong}
\left\{
     \begin{aligned}
            (\partial_t-\Delta) u^{n+1}_1&=f  & &\text{in }\Omega_1\times\R,\\
             u^{n+1}_1&=0 & &\text{on }(\partial\Omega_1\setminus\Gamma)\times\R,\\
             \nabla u^{n+1}_1\cdot\nu_1 + s u^{n+1}_1 &=  \nabla u^n_2\cdot\nu_1 + s u^n_2 & &\text{on }\Gamma\times\R,\\[5pt]
 	     (\partial_t-\Delta)u^{n+1}_2&=f  & &\text{in }\Omega_2\times\R,\\
             u^{n+1}_2&=0 & &\text{on }(\partial\Omega_2\setminus\Gamma)\times\R,\\
             \nabla u^{n+1}_2\cdot\nu_2 + s u^{n+1}_2 &=\nabla u^{n+1}_1\cdot\nu_2 + s u^{n+1}_1 & &\text{on }\Gamma\times\R,
        \end{aligned}
\right.
\end{equation}
where $u_2^0$ is a given initial guess, $s>0$ is a fixed method parameter, and $u^n_i$ approximates $u|_{\Omega_i\times\R}$. Note that the method per se is sequential, but the computation of each term $u^n_i$ can be implemented in parallel if $\Omega_i$ is a union of nonadjacent subdomains.

The standard variational framework for parabolic problems, based on solutions in the space 
\begin{displaymath}
H^1\bigl(\R,H^{-1}(\Omega_i)\bigr)\cap L^2\bigl(\R,H^{1}(\Omega_i)\bigr),
\end{displaymath}
is unfortunately not well suited for our domain decompositions. The issue is that two functions $u_i$, $i=1,2$, in the above solution space, which coincide on $\Gamma\times\R$ in the sense of trace, 
can not be ``pasted'' together into a new function in $H^1\bigl(\R,H^{-1}(\Omega)\bigr)$; compare with~\cite[Example~2.14]{costabel90}. Hence, the transmission problem~\cref{eq:TPStrong} does not necessarily yield a solution to the heat equation~\cref{eq:strong} in this context. 

In order to remedy this, we consider a framework with solutions in 
\begin{displaymath}
H^{1/2}\bigl(\R,L^2(\Omega_i)\bigr)\cap L^2\bigl(\R,H^{1}(\Omega_i)\bigr),
\end{displaymath}
which originates from~\cite{lionsmagenes2} and resolves the above issue. This $H^{1/2}$-setting also enables a trace theory valid for spatial Lipschitz domains~\cite{costabel90} and the related bilinear form becomes coercive-equivalent~\cite{fontesthesis}. This will be the starting point for our analysis. Note that the $H^{1/2}$-setting has been used by~\cite{costabel90} in the context of boundary element methods and was later employed for space-time finite elements~\cite{fontesthesis,larsson15,steinbach20}. 

The analysis is organized as follows: In~\cref{sec:prel,sec:tensor} we derive the properties of the required function spaces and operators. We introduce the variational formulations and prove equivalence between the heat equation and the transmission problem in~\cref{sec:weak}. The time-dependent Steklov--Poincar\'e operators are analysed in~\cref{sec:SP} and three standard space-time domain decompositions are proven to be well defined. In~\cref{sec:conv,sec:convmod} we prove convergence of the space-time Robin--Robin method in $L^2\bigl(\R,H^{1}(\Omega_1)\bigr)\times L^2\bigl(\R,H^{1}(\Omega_2)\bigr)$ and the convergence of a modified version of the method in
\begin{displaymath}
H^{1/2}\bigl(\R,L^2(\Omega_1)\bigr)\cap L^2\bigl(\R,H^{1}(\Omega_1)\bigr)\,\times\,
H^{1/2}\bigl(\R,L^2(\Omega_2)\bigr)\cap L^2\bigl(\R,H^{1}(\Omega_2)\bigr),
\end{displaymath}
respectively.

\section{Preliminaries}\label{sec:prel}
The spatial domain $\Omega\subset \R^{d}$ and its decomposition~\cref{eq:domain} into $\Omega_{i}$, $i=1,2$, is assumed to satisfy the properties below.
\begin{assumption}\label{ass:domains}
The sets $\Omega,\,\Omega_i,\,i=1,2$, are bounded and Lipschitz. The spatial interface~$\Gamma$ and the sets $\partial\Omega\setminus\partial\Omega_{i}$, $i=1,2$, are all $(d-1)$-dimensional Lipschitz manifolds.
\end{assumption}
For a description of Lipschitz domains, see~\cite[Chapter 6.2]{kufner}. The assumptions are made in order ensure the existence of the spatial trace operator, as well as, to allow the usage of Poincar\'e's inequality. As a notational convention, operators only depending on space or time are ``hatted'' and their extensions to space-time are denoted without hats, e.g., 
\begin{displaymath}
\hat{\Delta}:H_0^1(\Omega)\rightarrow H^{-1}(\Omega)\quad\text{and}\quad \Delta:L^{2}\bigl(\R,H_0^1(\Omega)\bigr)\rightarrow L^{2}\bigl(\R,H^{-1}(\Omega)\bigr).
\end{displaymath}
Furthermore, $c$ and $C$ denote generic positive constants.

Consider the spatial function spaces
\begin{displaymath}
V=H_0^1(\Omega),\quad  V_i^0=H_0^1(\Omega_i),\quad\text{and}\quad
V_i=\{v\in H^1(\Omega_i): \restr{(\hat{T}_{\partial\Omega_i}v)}{\partial\Omega_i\setminus\Gamma}=0\}.
\end{displaymath}
Here, $\hat{T}_{\partial\Omega_i}: H^1(\Omega_i)\rightarrow H^{1/2}(\partial\Omega_i)$ denotes the spatial trace operator, see~\cite[Theorem 6.8.13]{kufner}. The norm on $V_i$ and $V_i^0$ is given by 
\begin{displaymath}
	    \|v\|_{V_i}=\bigl(\|\nabla v\|^2_{L^2(\Omega_i)^{d}}+\|v\|^2_{L^2(\Omega_i)}\bigr)^{1/2}.
\end{displaymath}
By Poincaré's inequality and \cref{ass:domains} we have that $v\mapsto\|\nabla v\|_{L^2(\Omega_i)^{d}}$ is an equivalent norm on $V_i$ and $V_i^0$. 
The Hilbert spaces $V$, $V_i$, and $V_i^0$ are all separable. The space $H^{1/2}(\partial\Omega_i)$ is defined as
\begin{equation}\label{eq:slobodetskii}
	\begin{aligned}
	    &H^{1/2}(\partial\Omega_i)=\{v\in L^2(\partial\Omega_i):  \|v\|_{H^{1/2}(\partial\Omega_i)}<\infty\}\quad\text{with}\\
	    &\|v\|_{H^{1/2}(\partial\Omega_i)} =
	                 \Big(\int_{\partial\Omega_i}\int_{\partial\Omega_i}\frac{\abs{v(x)-v(y)}^2}{\abs{x-y}^{d}}\,\mathrm{d}x\,\mathrm{d}y
	                        +\|v\|^2_{L^2(\partial\Omega_i)}\Big)^{1/2}.
	\end{aligned}
\end{equation}
Denoting the extension by zero from $\Gamma$ to $\partial\Omega_i$ by $\hat{E}_i$ we define the Lions--Magenes space as
\begin{displaymath}
	    \Lambda=\{\mu\in L^2(\Gamma): \hat{E}_i\mu\in H^{1/2}(\partial\Omega_i)\}\quad\text{with}\quad 
	    \|\mu\|_\Lambda=\|\hat{E}_i\mu\|_{H^{1/2}(\partial\Omega_i)}.
\end{displaymath}
Since $H^{1/2}(\partial\Omega_i)$ is a separable Hilbert space, so is  $\Lambda$. On $V_i$ the trace operator takes the form
\begin{displaymath}
	     \hat{T}_i: V_i\rightarrow \Lambda:v\mapsto\restr{(\hat{T}_{\partial\Omega_i}v)}{\Gamma},
\end{displaymath}	     
and is bounded; see~\cite[Lemma 4.4]{EHEE22}.
\begin{remark}
By~\cite[Lemma A.8]{widlund} we can identify $\Lambda=[H^{1/2}_0(\Gamma), L^2(\Gamma)]_{1/2}$, which shows that $\Lambda$ is independent of $i=1,2$. This reference is without proof, but a proof exists for the case of a smooth $\Omega$ in~\cite[Theorem 11.7]{lionsmagenes1}. The independence of $\Lambda$ on $i=1, 2$ is assumed to be true for the sake of presentation, but it is not a necessary assumption for our convergence analysis; see~\cite{EHEE22}.
\end{remark}

For the temporal function space $H^s(\R)$, $s\in [0,1]$, we use the Fourier definition
\begin{equation}\label{eq:sobolevfourier}
	\begin{aligned}
            &H^s(\R)=\{v\in L^2(\R): (1+(\cdot)^2)^{s/2}\hat{\F} v\in L_\C^2(\R)\}\quad\text{with}\\
             &\|v\|_{H^s(\R)}=\|(1+(\cdot)^2)^{s/2}\hat{\F} v\|_{L_\C^2(\R)}.
         \end{aligned}
\end{equation}
Here, $\hat{\F}$ is the Fourier transform and $L_\C^2(\R)$ is the complexification of the real Hilbert space $L^2(\R)$. Note that this is equivalent to the Sobolev--Slobodetskii definition \cref{eq:slobodetskii} for $s=1/2$ and $\partial\Omega_{i}=\R$; see~\cite[Lemma 16.3]{tartar}. We then introduce the temporal Hilbert transform by
\begin{displaymath}
        \hat{\Hc} v(t)=\lim_{\epsilon\rightarrow 0^{+}}\frac{1}{\pi}\int_{\abs{s}\geq\epsilon}\frac{1}{s}v(t-s)\,\mathrm{d}s.
\end{displaymath}
\noindent
From \cite[Chapters 4, 5]{king} we have that $\hat{\Hc}: L^2(\R)\rightarrow L^2(\R)$ is an isomorphism with inverse $\hat{\Hc}^{-1}=-\hat{\Hc}$ and
\begin{equation}\label{eq:hilbertfourier}
        \hat{\Hc}=\hat{\F}^{-1}\hat{M}_\sgn\hat{\F},
\end{equation}
where $\hat{M}_\sgn v(\xi)=-\mathrm{i}\,\sgn(\xi)v(\xi)$. The formula \cref{eq:hilbertfourier} combined with the definition \cref{eq:sobolevfourier} shows that $\hat{\Hc}: H^s(\R)\rightarrow H^s(\R)$ is an isomorphism. We also introduce the temporal half-derivatives as
\begin{displaymath}
\hat{\partial}^{1/2}_\pm=\hat{\F}^{-1}\hat{M}_{\pm}\hat{\F},
\end{displaymath}
where $\hat{M}_+v(\xi)=\sqrt{\mathrm{i}\xi} v(\xi)$ and $\hat{M}_-v(\xi)=\overline{\sqrt{\mathrm{i}\xi}}v(\xi)$. It is clear from the definition \cref{eq:sobolevfourier} that $\hat{\partial}^{1/2}_\pm: H^{1/2}(\R)\rightarrow L^2(\R)$ are bounded linear operators. The important relations between these operators are given in the lemma below.
\begin{lemma}\label{lemma:timeder}
For $v\in H^{1/2}(\R)$ one has the equalities 
\begin{displaymath}
\hat{\partial}^{1/2}_+ v = -\hat{\partial}^{1/2}_-\hat{\Hc} v\quad\text{and}\quad (\hat{\partial}^{1/2}_+ v\,,\, \hat{\partial}^{1/2}_- v)_{L^2(\R)}=0.
\end{displaymath}
Moreover, for $v\in H^1(\R)$ and $w\in H^{1/2}(\R)$ one has the fractional integration by parts formula
\begin{displaymath}
(\hat{\partial}_t v\,,\, w)_{L^2(\R)}=(\hat{\partial}^{1/2}_+ v\,,\, \hat{\partial}^{1/2}_- w)_{L^2(\R)}.
\end{displaymath}
\end{lemma}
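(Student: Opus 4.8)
The plan is to push everything through the Fourier transform, where each of the three claims collapses to an elementary identity for the corresponding Fourier multipliers. First I would fix the principal branch of the square root and record that, writing $m(\xi)=\sqrt{\mathrm{i}\xi}=|\xi|^{1/2}e^{\mathrm{i}\pi\sgn(\xi)/4}$, one has the pointwise relations $m(\xi)^2=\mathrm{i}\xi$, $\overline{m(\xi)}=m(-\xi)$, and — since $\mathrm{i}\,\sgn(\xi)=e^{\mathrm{i}\pi\sgn(\xi)/2}$ — also $m(\xi)=\mathrm{i}\,\sgn(\xi)\,\overline{m(\xi)}$. These three facts carry the entire analytic content of the lemma.

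Next I would set up the passage between the real and complex $L^2$ spaces. Since $\hat{\F}$ is unitary on $L_\C^2(\R)$ and any $v\in H^{1/2}(\R)$ is real-valued, $\hat{\F}v$ has the Hermitian symmetry $\hat{\F}v(-\xi)=\overline{\hat{\F}v(\xi)}$; combined with $\overline{m(\xi)}=m(-\xi)$ this shows that $\hat{\partial}^{1/2}_+v$ and $\hat{\partial}^{1/2}_-v$ are again real-valued, so the real inner products appearing in the statement are the restrictions of the complex one, and Plancherel gives $(f,g)_{L^2(\R)}=(\hat{\F}f,\hat{\F}g)_{L_\C^2(\R)}$ for real $f,g$.

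Then the three claims follow directly. For the first, $\hat{\partial}^{1/2}_+$ has multiplier $m(\xi)$ while $-\hat{\partial}^{1/2}_-\hat{\Hc}$ has multiplier $-\overline{m(\xi)}\cdot(-\mathrm{i}\,\sgn\xi)=\mathrm{i}\,\sgn(\xi)\overline{m(\xi)}$, and these coincide by the pointwise identity above. For the orthogonality, Plancherel turns $(\hat{\partial}^{1/2}_+v,\hat{\partial}^{1/2}_-v)_{L^2(\R)}$ into $\int_\R m(\xi)\hat{\F}v(\xi)\,\overline{\overline{m(\xi)}\hat{\F}v(\xi)}\,\mathrm{d}\xi=\int_\R m(\xi)^2|\hat{\F}v(\xi)|^2\,\mathrm{d}\xi=\mathrm{i}\int_\R\xi\,|\hat{\F}v(\xi)|^2\,\mathrm{d}\xi$; the integral converges absolutely because $v\in H^{1/2}(\R)$ (so $\xi\mapsto|\xi|\,|\hat{\F}v(\xi)|^2$ is integrable), and the integrand is odd, since $|\hat{\F}v|^2$ is even by the reality of $v$, hence it vanishes. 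For the integration-by-parts formula, $\hat{\partial}_t$ has multiplier $\mathrm{i}\xi$, so Plancherel together with $m(\xi)^2=\mathrm{i}\xi$ yields $(\hat{\partial}^{1/2}_+v,\hat{\partial}^{1/2}_-w)_{L^2(\R)}=\int_\R m(\xi)\hat{\F}v(\xi)\,\overline{\overline{m(\xi)}\hat{\F}w(\xi)}\,\mathrm{d}\xi=\int_\R \mathrm{i}\xi\,\hat{\F}v(\xi)\overline{\hat{\F}w(\xi)}\,\mathrm{d}\xi=(\hat{\partial}_tv,w)_{L^2(\R)}$, with all integrals finite since $v\in H^1(\R)$ and $w\in H^{1/2}(\R)$.

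I do not expect a genuine obstacle: the only delicate points are the bookkeeping with the branch of $\sqrt{\mathrm{i}\xi}$ and the real-versus-complex $L^2$ identification, and once the three pointwise multiplier identities are established the rest is Plancherel and a parity argument.
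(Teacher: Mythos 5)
Your proposal is correct and follows essentially the same route as the paper's proof: both reduce all three identities to the pointwise multiplier relation $\sqrt{\mathrm{i}\xi}=\mathrm{i}\,\sgn(\xi)\overline{\sqrt{\mathrm{i}\xi}}$, Plancherel, and the evenness of $|\hat{\F}v|^2$ for real $v$. Your version merely spells out the branch choice and the real-versus-complex bookkeeping that the paper leaves implicit.
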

\begin{proof}
Let $v\in H^{1/2}(\R)$ and observe that
\begin{displaymath}
\sqrt{\mathrm{i}\xi}=-\overline{\sqrt{\mathrm{i}\xi}}(-\mathrm{i}\,\sgn(\xi)). 
\end{displaymath}
The Fourier characterization of the operators $\hat{\Hc},\hat{\partial}^{1/2}_+, \hat{\partial}^{1/2}_-$ then implies that 
\begin{displaymath}
\hat{\partial}^{1/2}_+v =-\hat{\partial}^{1/2}_-\hat{\Hc}v. 
\end{displaymath}
A similar argument, together with the fact that $|\hat{\F}v|^2$ is an even function, shows that
\begin{displaymath}
(\hat{\partial}^{1/2}_+ v, \hat{\partial}^{1/2}_- v)_{L^2(\R)}=\int_\R i\xi|\hat{\F}v|^2\mathrm{d}\xi=0
\end{displaymath}
for $v\in H^{1/2}(\R)$. Finally, the fractional integration by parts formula follows by the Fourier characterization $\hat{\partial}_t=\hat{\F}^{-1}\hat{M}\hat{\F}$, where $\hat{M}v(\xi)=\mathrm{i} \xi v(\xi)$. 
\qed
\end{proof}
The Fourier characterization of the operator $\hat{\partial}^{1/2}_+$ and~\cref{eq:sobolevfourier} yield that
\begin{displaymath}
v\mapsto\bigl(\|\hat{\partial}^{1/2}_+v\|^2_{L^2(\R)}+\|v\|^2_{L^2(\R)}\bigr)^{1/2}
\end{displaymath}
is an equivalent norm on $H^{1/2}(\R)$.

\section{Tensor spaces}\label{sec:tensor}

Inspired by the finite element analysis in~\cite{steinbach20}, we identify our Bochner spaces in space-time as tensor spaces. A general introduction to tensor spaces can be found in ~\cite[Chapter 3.4]{weidmann}. 

We denote the algebraic tensor product of two (real) separable Hilbert spaces $X, Y$ by $X\otimes Y$. For elements of the form $x\otimes y$ the inner product is defined as
\begin{displaymath}
(x_1\otimes y_1, x_2\otimes y_2)_{X\tensor Y}=(x_1, x_2)_X(y_1, y_2)_Y
\end{displaymath}
and for arbitrary elements in $X\otimes Y$ the definition is extended by linearity. The closure of $X\otimes Y$ with respect to this norm is denoted by $X\tensor Y$. From these definitions it follows that  $X\tensor Y = Y\tensor X$. 

If $\{x_k\}_{k\geq 0}$ and $\{y_\ell\}_{\ell\geq 0}$ are orthonormal bases of $X$ and $Y$, respectively, then\linebreak $\{x_k\otimes y_\ell\}_{k,\ell\geq 0}$ is an orthonormal basis of $X\tensor Y$ and every $v\in X\tensor Y$ can be represented as 
\begin{equation}\label{eq:uiorth}
v=\sum_{k,\ell=0}^\infty c_{k,\ell}(x_k \otimes y_\ell)=\sum_{k=0}^\infty x_k \otimes z_k ,\quad z_k=\sum_{\ell=0}^\infty c_{k,\ell}y_\ell\in Y,
\end{equation} 
where $\{c_{k,\ell}\}_{k,\ell\geq 0}$ are real coefficients. This follows from~\cite[Theorem 3.12]{weidmann}. 

We recall the following result on the extensions of operators to tensor spaces. The proof of \cref{lemma:tensor} can be found in~\cite[Section 12.4.1]{aubin}. 
\begin{lemma}\label{lemma:tensor}
Let $X_{k}, Y_{k}$, $k=1,2$, be separable Hilbert spaces, and $A: X_1\rightarrow X_2$, $B: Y_1\rightarrow Y_2$ be bounded linear operators. Then there is a bounded linear operator
\begin{displaymath}
                A\tensor B: X_1\tensor Y_1\rightarrow X_2\tensor Y_2
\end{displaymath}
 such that $(A\tensor B)(x\otimes y)=Ax\otimes By$ for every $x\in X_1, y\in Y_1$.
\end{lemma}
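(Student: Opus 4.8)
\emph{Proof proposal.} The plan is to build $A\tensor B$ by first defining it on the dense subspace $X_1\otimes Y_1\subset X_1\tensor Y_1$, establishing the norm bound $\|A\tensor B\|\le\|A\|\,\|B\|$ there, and then extending it uniquely by continuity; the identity $(A\tensor B)(x\otimes y)=Ax\otimes By$ will be built into the construction, and density of $X_1\otimes Y_1$ in $X_1\tensor Y_1$ makes the extension unique. The definition on algebraic tensors is forced on us and requires no ad hoc choice: the map $X_1\times Y_1\ni(x,y)\mapsto Ax\otimes By\in X_2\tensor Y_2$ is bilinear, so by the universal property of the algebraic tensor product it factors through a unique linear map $T\colon X_1\otimes Y_1\to X_2\tensor Y_2$ with $T(x\otimes y)=Ax\otimes By$, and representation-independence comes for free.

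For the norm estimate it is convenient not to attack $T$ head-on but to factor it through the two ``one-sided'' maps: let $T_B\colon X_1\otimes Y_1\to X_1\otimes Y_2$ be the linear map induced by $(x,y)\mapsto x\otimes By$ and $T_A\colon X_1\otimes Y_2\to X_2\otimes Y_2$ the one induced by $(x,y)\mapsto Ax\otimes y$, so that $T=T_A\circ T_B$. By the symmetry $X\tensor Y=Y\tensor X$ recorded in \cref{sec:tensor}, bounding $T_A$ reduces to bounding a map of the same shape as $T_B$, so the whole estimate comes down to the claim that, for bounded $B\colon Y_1\to Y_2$, the map $x\otimes y\mapsto x\otimes By$ has operator norm at most $\|B\|$ with respect to the tensor norms.

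To prove this last claim, take $v\in X_1\otimes Y_1$ and pick a finite representation of it; let $W\subset X_1$ be the span of the first-factor vectors occurring there, choose an orthonormal basis $x_1,\dots,x_m$ of $W$, and collect terms to write $v=\sum_{k=1}^m x_k\otimes z_k$ with $z_k\in Y_1$. Orthonormality of $\{x_k\}$ yields at once $\|v\|_{X_1\tensor Y_1}^2=\sum_{k=1}^m\|z_k\|_{Y_1}^2$, and likewise $\|T_Bv\|_{X_1\tensor Y_2}^2=\sum_{k=1}^m\|Bz_k\|_{Y_2}^2\le\|B\|^2\sum_{k=1}^m\|z_k\|_{Y_1}^2$, which is the desired bound. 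Hence $T_B$ and, symmetrically, $T_A$ extend by continuity to bounded operators of norm at most $\|B\|$ and $\|A\|$ on the completed tensor spaces, and $A\tensor B$ may be taken to be the composition of these extensions; it then has norm at most $\|A\|\,\|B\|$ and satisfies $(A\tensor B)(x\otimes y)=Ax\otimes By$ by construction, since $T=T_A\circ T_B$ on elementary tensors. I expect the only genuine obstacle to be this norm estimate: a direct expansion $v=\sum_{k,\ell}c_{k\ell}\,x_k\otimes y_\ell$ in orthonormal bases of \emph{both} factors turns $\|Tv\|^2$ into a double sum weighted by the Gram matrices of $A^*A$ and $B^*B$, whose off-diagonal entries must then be controlled; factoring through the one-sided maps and orthonormalizing one factor at a time is precisely what sidesteps this difficulty, while well-definedness, linearity, and the continuous extension are all routine.
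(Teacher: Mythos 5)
Your proof is correct. Note that the paper does not prove this lemma at all --- it simply cites \cite[Section~12.4.1]{aubin} --- so there is no in-paper argument to compare against; your write-up is essentially the standard proof that one finds in such references. The two key moves are both sound: the universal property of the algebraic tensor product disposes of well-definedness, and the factorization $A\tensor B=(A\tensor I)\circ(I\tensor B)$ together with orthonormalizing only the untouched factor gives the clean identity $\|v\|^2=\sum_k\|z_k\|^2$, from which $\|I\tensor B\|\le\|B\|$ and hence $\|A\tensor B\|\le\|A\|\,\|B\|$ follow. The only points you leave implicit are routine and consistent with the paper's setup in Section~3: that the inner product extended by bilinearity from elementary tensors is genuinely positive definite on $X\otimes Y$ (so that the norm computation makes sense), and that the composition of the two continuous extensions coincides with the continuous extension of $T=T_A\circ T_B$ because they agree on the dense subspace $X_1\otimes Y_1$. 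Your closing remark is also apt: attacking $\|Tv\|$ directly with orthonormal bases of both factors forces one to control the off-diagonal Gram entries of $A^*A$ and $B^*B$, and the one-sided factorization is exactly the device that avoids this.
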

From \cref{lemma:tensor} it follows that the parabolic trace operators
\begin{align*}
            T_{\partial\Omega_i}&=I\tensor\hat{T}_{\partial\Omega_i}:L^2(\R)\tensor H^1(\Omega)\rightarrow L^2(\R)\tensor H^{1/2}(\partial\Omega_i), \\
            T_i&=I\tensor\hat{T}_i:L^2(\R)\tensor V_i\rightarrow L^2(\R)\tensor \Lambda
\end{align*}
are bounded. Furthermore, we have the identity
\begin{displaymath}
T_iv=\restr{(T_{\partial\Omega_i}v)}{\Gamma\times\R}\quad\text{for }v\in L^2(\R)\tensor V_i. 
\end{displaymath}
This is easily proven by validating the identity on the dense subset $L^2(\R)\otimes V_i$ and using the continuity of the operators. Note that the restrictions, as well as the extension by zero $E_i=I\tensor\hat{E}_i$, are all well defined operations due to~\cref{lemma:tensor}. 

For any separable Hilbert space $X$ one has the identification
\begin{displaymath}
H^s(\R)\tensor X\cong H^s(\R, X),
\end{displaymath}
where $H^s(\R, X)$, $s\in [0,1]$, is a Sobolev--Bochner space; see~\cite[Chapter 2.5.d]{hytonen}. This can be proven by noting that the norms coincide on $H^s(\R)\otimes X$, the identification $H^1(\R)\tensor X\cong H^1(\R, X)$; see~\cite[Theorem 12.7.1]{aubin}, and the fact that $H^1(\R)\otimes X$ and $H^1(\R, X)$ are dense in $H^s(\R)\tensor X$ and $H^s(\R, X)$, respectively; see~\cite[Theorem 3.12]{weidmann} and~\cite[Proposition 6.1]{lionsmagenes1}.

The Sobolev--Bochner spaces below will make up the core of the analysis:
\begin{align*}
            W&=H^{1/2}(\R)\tensor L^2(\Omega)\,\cap\, L^2(\R)\tensor V, \\
	    W_i^0&=H^{1/2}(\R)\tensor L^2(\Omega_i)\,\cap\, L^2(\R)\tensor V_i^0, \\
	    W_i&=H^{1/2}(\R)\tensor L^2(\Omega_i)\,\cap\, L^2(\R)\tensor V_i, \\
	     Z&=H^{1/4}(\R)\tensor L^2(\Gamma)\,\cap\, L^2(\R)\tensor \Lambda.
\end{align*}
\begin{lemma}\label{lemma:identities}
Suppose \cref{ass:domains} holds. Then we have the identities
\begin{align}
            L^2(\R)\tensor V_i^0&=\{v\in L^2(\R)\tensor H^1(\Omega_i): T_{\partial\Omega_i}v=0\},\label{eq:Wi0}\\
            L^2(\R)\tensor V_i&=\{v\in L^2(\R)\tensor H^1(\Omega_i): \restr{(T_{\partial\Omega_i}v)}{(\partial\Omega_i\setminus\Gamma)\times\R}=0\},\label{eq:Wi}\\
            L^2(\R)\tensor \Lambda&=\{\mu\in L^2(\R)\tensor L^2(\Gamma): E_i\mu\in L^2(\R)\tensor H^{1/2}(\partial\Omega_i)\}.\label{eq:Z}
\end{align}
\end{lemma}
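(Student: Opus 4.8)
The plan is to reduce all three identities to a single statement about kernels under tensoring. \emph{Claim:} if $A\colon X\to Y$ is a bounded linear map between separable Hilbert spaces with $N=\ker A$, then for every separable Hilbert space $U$ the operator $I_U\tensor A$ supplied by \cref{lemma:tensor} satisfies $\ker(I_U\tensor A)=U\tensor N$, where $U\tensor N$ is viewed as a closed subspace of $U\tensor X$ via the isometry $I_U\tensor\iota_N$. To prove this I would fix an orthonormal basis $\{e_k\}$ of $U$, write $v\in U\tensor X$ as $v=\sum_k e_k\otimes x_k$ with $x_k\in X$ and $\sum_k\|x_k\|_X^2=\|v\|^2<\infty$ in the sense of \cref{eq:uiorth}, and note that $(I_U\tensor A)v=\sum_k e_k\otimes Ax_k$ by continuity; pairing with $e_j\otimes\phi$ and using orthonormality gives $(Ax_j,\phi)_Y=0$ for all $\phi\in Y$, so $(I_U\tensor A)v=0$ forces $x_j\in N$ for every $j$, and then the partial sums $\sum_{k\le K}e_k\otimes x_k\in U\otimes N$ converge to $v$, whence $v\in U\tensor N$. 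The reverse inclusion is clear on elementary tensors and passes to the closure since kernels are closed, and since $U\tensor X=X\tensor U$ the statement holds with $U$ on either factor. Specializing to $N=\{0\}$ also shows that $I_U\tensor A$ is injective whenever $A$ is; I will use this by-product below.

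With the claim in hand I would realize each spatial space as a kernel. For \cref{eq:Wi0}, the standard identification of $H^1_0(\Omega_i)$ with the kernel of the trace operator on a Lipschitz domain, valid under \cref{ass:domains}, gives $V_i^0=\ker\hat T_{\partial\Omega_i}$ inside $H^1(\Omega_i)$, so the claim with $U=L^2(\R)$ yields $L^2(\R)\tensor V_i^0=\ker T_{\partial\Omega_i}$. For \cref{eq:Wi}, let $\hat P\colon H^{1/2}(\partial\Omega_i)\to L^2(\partial\Omega_i\setminus\Gamma)$ be restriction, which is bounded because $H^{1/2}(\partial\Omega_i)\hookrightarrow L^2(\partial\Omega_i)$; then $V_i=\ker(\hat P\hat T_{\partial\Omega_i})$ and the claim gives $L^2(\R)\tensor V_i=\ker\bigl((I\tensor\hat P)T_{\partial\Omega_i}\bigr)$, after which it remains to identify $(I\tensor\hat P)T_{\partial\Omega_i}v$ with $\restr{(T_{\partial\Omega_i}v)}{(\partial\Omega_i\setminus\Gamma)\times\R}$. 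This I would do exactly as the excerpt does for $T_iv=\restr{(T_{\partial\Omega_i}v)}{\Gamma\times\R}$, namely by verifying the equality on the dense subspace $L^2(\R)\otimes V_i$ and invoking continuity, together with the elementary rules $(A_2\tensor B_2)(A_1\tensor B_1)=(A_2A_1)\tensor(B_2B_1)$ and the compatibility of $I\tensor(\cdot)$ with the embeddings $L^2(\R)\tensor H^{1/2}\hookrightarrow L^2(\R)\tensor L^2$.

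The identity \cref{eq:Z} requires one extra turn. By the definition of $\|\cdot\|_\Lambda$, the extension-by-zero $\hat E_i$ is an isometry of $\Lambda$ onto the closed subspace $\ker\hat Q\subseteq H^{1/2}(\partial\Omega_i)$, where $\hat Q\colon H^{1/2}(\partial\Omega_i)\to L^2(\partial\Omega_i\setminus\Gamma)$ is restriction; hence $E_i=I\tensor\hat E_i$ maps $L^2(\R)\tensor\Lambda$ isometrically onto $L^2(\R)\tensor\ker\hat Q=\ker(I\tensor\hat Q)$, the last equality by the claim. Given $\mu\in L^2(\R)\tensor L^2(\Gamma)$: if $\mu\in L^2(\R)\tensor\Lambda$ then $E_i\mu\in L^2(\R)\tensor H^{1/2}(\partial\Omega_i)$ because $\hat E_i\colon\Lambda\to H^{1/2}(\partial\Omega_i)$ is bounded; conversely, if $E_i\mu\in L^2(\R)\tensor H^{1/2}(\partial\Omega_i)$ then, since the $L^2$-restriction of $E_i\mu$ to $(\partial\Omega_i\setminus\Gamma)\times\R$ vanishes (the $L^2$-restriction annihilates the range of $\hat E_i$), we get $E_i\mu\in\ker(I\tensor\hat Q)=E_i\bigl(L^2(\R)\tensor\Lambda\bigr)$, so $E_i\mu=E_i\nu$ with $\nu\in L^2(\R)\tensor\Lambda$, and injectivity of $E_i$ (from injectivity of $\hat E_i$ and the by-product above) gives $\mu=\nu\in L^2(\R)\tensor\Lambda$. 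I expect the main obstacle to be precisely this last step: one has to track carefully in which ambient space each object lives and repeatedly combine injectivity of tensor extensions with the compatibility of the operator extensions and the $H^{1/2}\hookrightarrow L^2$ embeddings, whereas \cref{eq:Wi0,eq:Wi} come out as near-immediate corollaries of the kernel claim.
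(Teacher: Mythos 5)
Your proposal is correct and is in substance the paper's own argument: the paper proves each identity by expanding $v=\sum_k x_k\otimes z_k$ in an orthonormal basis of $L^2(\R)$, using orthogonality to force each coefficient $z_k$ into the relevant spatial kernel, and summing back via a Cauchy-sequence argument, which is exactly the content of your abstract claim $\ker(I_U\tensor A)=U\tensor\ker A$; for \cref{eq:Z} the paper likewise relies on $\hat E_i$ being an isometry of $\Lambda$ onto the $H^{1/2}(\partial\Omega_i)$-functions vanishing on $\partial\Omega_i\setminus\Gamma$. You merely factor the repeated computation through a single reusable kernel lemma, which is a clean (and correct) way to organize the same proof.
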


\begin{proof}
Throughout the proof let $\{x_k\}_{k\geq 0}$ be an orthonormal basis of $L^2(\R)$. Let $\{y_\ell\}_{\ell\geq 0}$ be an orthonormal basis of a separable Hilbert space $Y$ given by the context, and the corresponding $\{z_k\}_{k\geq 0}$ are defined as in~\cref{eq:uiorth}.

To prove the identity~\cref{eq:Wi0} recall that $V_i^0=\{y\in H^1(\Omega_i): \hat{T}_{\partial\Omega_i}y=0\}$; see~\cite[Theorem~6.6.4]{kufner}. 
This observation together with the continuity of $T_{\partial\Omega_i}$ then yields that every $v\in L^2(\R)\tensor V_i^0$ satisfies 
\begin{displaymath}
T_{\partial\Omega_i}v=\sum_{k,\ell=0}^\infty c_{k,\ell}(x_k \otimes \hat{T}_{\partial\Omega_i}y_\ell)=0.
\end{displaymath}
Conversely, suppose that $v\in L^2(\R)\tensor H^1(\Omega_i)$ with $T_{\partial\Omega_i}v=0$. By the continuity of $T_{\partial\Omega_i}$ we have 
\begin{displaymath}
0=T_{\partial\Omega_i}v=\sum_{k=0}^\infty x_k \otimes \hat{T}_{\partial\Omega_i}z_k.
\end{displaymath}
It then follows by the orthonormality of $\{x_k\}_{k\geq 0}$ that 
\begin{equation}\label{eq:ort}
0 = (T_{\partial\Omega_i}v \, ,\, x_k\otimes \hat{T}_{\partial\Omega_i}z_k)_{L^2(\R)\tensor L^{2}(\partial\Omega_i)}=\|\hat{T}_{\partial\Omega_i}z_k\|^2_{L^2(\partial\Omega_i)}.
\end{equation}
Hence, $z_k\in V_i^0$, $k=0,1,\dots$, and from~\cref{eq:uiorth} we can conclude that $v\in L^2(\R)\tensor V_i^0$.

The proof of~\cref{eq:Wi} is similar after observing the definition $V_i=\{v\in H^1(\Omega_i): \restr{(\hat{T}_{\partial\Omega_i}v)}{\partial\Omega_i\setminus\Gamma}=0\}$ and employing the $L^2(\R)\tensor L^{2}(\partial\Omega_i\setminus\Gamma)$ inner product in~\cref{eq:ort} instead of $L^2(\R)\tensor L^{2}(\partial\Omega_i)$.

To prove~\cref{eq:Z} first observe that $E_i$ can be interpreted as a map from $L^2(\R)\tensor \Lambda$ into $L^2(\R)\tensor H^{1/2}(\partial\Omega_i)$, by~\cref{lemma:tensor}. This immediately gives the inclusion from left to right. Conversely, assume that $\mu\in L^2(\R)\tensor L^2(\Gamma)$ with $E_i\mu\in L^2(\R)\tensor H^{1/2}(\partial\Omega_i)$. By~\cref{eq:uiorth}, we have the representation
\begin{displaymath}
E_i\mu=\sum_{k=0}^\infty x_k \otimes z_k,\quad z_k\in H^{1/2}(\partial\Omega_i).
\end{displaymath}
As $\restr{(E_i\mu)}{(\partial\Omega_i\setminus\Gamma)\times\R}=0$, the orthonormality of $\{x_k\}_{k\geq 0}$ yields that
\begin{displaymath}
0 = (E_i\mu \, ,\, x_k\otimes z_k)_{L^2(\R)\tensor L^{2}(\partial\Omega_i\setminus\Gamma)}=\|z_k\|^2_{L^2(\partial\Omega_i\setminus\Gamma)},
\end{displaymath}
i.e., $\restr{z_k}{\partial\Omega_i\setminus\Gamma}=0$. As $\hat{E}_i$ is an isometry from $\Lambda$ onto $\{\mu\in H^{1/2}(\partial\Omega_i): \restr{\mu}{\partial\Omega_i\setminus\Gamma}=0\}$; compare with~\cite[Lemma~4.1]{EHEE22}, we obtain that $\restr{z_k}{\Gamma}\in\Lambda$ for all $k=0,1,\dots$ Since
\begin{displaymath}
\mu=\restr{(E_i\mu)}{\Gamma\times\R}=\sum_{k=0}^\infty x_k \otimes \restr{z_k}{\Gamma}\quad\text{in }L^2(\R)\tensor L^2(\Gamma)
\end{displaymath}
and $\{\sum_{k=0}^n x_k \otimes \restr{z_k}{\Gamma}\}_{n\geq0}$ is a Cauchy sequence in $L^2(\R)\tensor \Lambda$, we conclude that\linebreak $\mu\in L^2(\R)\tensor \Lambda$.
\qed
\end{proof}

\begin{lemma}\label{lemma:density}
If \cref{ass:domains} holds, then $Z$ is dense in $L^2(\Gamma\times\R)$.
\end{lemma}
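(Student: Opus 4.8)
The plan is to exhibit a subspace of $Z$ that is obviously dense in $L^2(\Gamma\times\R)$, and the natural candidate is the algebraic tensor product $H^{1/4}(\R)\otimes\Lambda$. First I would reduce to the tensor picture: by the Sobolev--Bochner identification with $s=0$ one has $L^2(\R)\tensor L^2(\Gamma)\cong L^2\bigl(\R,L^2(\Gamma)\bigr)\cong L^2(\Gamma\times\R)$, so it suffices to show that $Z$ is dense in $L^2(\R)\tensor L^2(\Gamma)$.

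Next I would verify the inclusion $H^{1/4}(\R)\otimes\Lambda\subseteq Z$. Since $\Lambda\subseteq L^2(\Gamma)$ and $H^{1/4}(\R)\subseteq L^2(\R)$ with continuous inclusions, \cref{lemma:tensor} shows that any elementary tensor $x\otimes\mu$ with $x\in H^{1/4}(\R)$ and $\mu\in\Lambda$ lies in both $H^{1/4}(\R)\tensor L^2(\Gamma)$ and $L^2(\R)\tensor\Lambda$; summing finitely many such tensors stays in the intersection $Z$.

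Then I would prove density of $H^{1/4}(\R)\otimes\Lambda$ in $L^2(\R)\tensor L^2(\Gamma)$, which follows from density of each factor: $H^{1/4}(\R)$ is dense in $L^2(\R)$ (immediate from the Fourier definition \cref{eq:sobolevfourier}), and $\Lambda$ is dense in $L^2(\Gamma)$. Given an elementary tensor $x\otimes\mu\in L^2(\R)\tensor L^2(\Gamma)$, choosing $x_n\in H^{1/4}(\R)$ with $x_n\to x$ and $\mu_n\in\Lambda$ with $\mu_n\to\mu$ gives $x_n\otimes\mu_n\to x\otimes\mu$ by bilinearity and the identity $\|a\otimes b\|=\|a\|\,\|b\|$; since finite sums of elementary tensors are by construction dense in the completed tensor product, $H^{1/4}(\R)\otimes\Lambda$ is dense, and the lemma follows.

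The only genuinely nontrivial ingredient — and the step I expect to require the most care — is the density of $\Lambda$ in $L^2(\Gamma)$. I would obtain it by showing $C_c^\infty(\mathrm{int}\,\Gamma)\subseteq\Lambda$: a smooth function with compact support in the relative interior of $\Gamma$ extends by zero to a compactly supported Lipschitz function on $\partial\Omega_i$, for which the Slobodetskii seminorm in \cref{eq:slobodetskii} is finite, so $\hat{E}_i\mu\in H^{1/2}(\partial\Omega_i)$; as $C_c^\infty(\mathrm{int}\,\Gamma)$ is already dense in $L^2(\Gamma)$, so is $\Lambda$. Alternatively, one can invoke the identification $\Lambda=[H^{1/2}_0(\Gamma),L^2(\Gamma)]_{1/2}$ recorded after the function-space definitions together with the density of $H^{1/2}_0(\Gamma)$ in $L^2(\Gamma)$.
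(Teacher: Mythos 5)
Your proposal is correct and follows essentially the same route as the paper: show $H^{1/4}(\R)\otimes\Lambda\subset Z$ and deduce density from the density of the factors $H^{1/4}(\R)\subset L^2(\R)$ and $\Lambda\subset L^2(\Gamma)$ in the completed tensor product $L^2(\R)\tensor L^2(\Gamma)\cong L^2(\Gamma\times\R)$. The only difference is cosmetic: the paper simply cites references for the density of the two factors (and for the tensor-product density principle), whereas you supply an elementary verification that $C_c^\infty(\mathrm{int}\,\Gamma)\subset\Lambda$, which is a sound way to obtain the density of $\Lambda$ in $L^2(\Gamma)$.
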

\begin{proof}
The spaces $H^{1/4}(\R)$ and $\Lambda$ are dense in $L^2(\R)$ and $L^2(\Gamma)$, respectively; see~\cite[Lemma 15.10]{tartar} and \cite[Lemma 4.2]{EHEE22}. Therefore, by \cite[Theorem 3.12]{weidmann}, the corresponding algebraic tensor space $H^{1/4}(\R)\otimes \Lambda$ is dense in $L^2(\R)\tensor L^2(\Gamma)\cong L^2(\Gamma\times\R)$. The density of $Z$ in $L^2(\Gamma\times\R)$ then follows as $H^{1/4}(\R)\otimes\Lambda\subset Z$.
\qed
\end{proof}
\begin{lemma}\label{lemma:tracei}
If \cref{ass:domains} holds, then  $T_i:W_i\rightarrow Z$ is bounded.
\end{lemma}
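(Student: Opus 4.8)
The plan is to bound $T_i=I\tensor\hat{T}_i$ on the two constituents of the intersection norm of $Z$ separately. The estimate $\|T_iv\|_{L^2(\R)\tensor\Lambda}\le C\|v\|_{W_i}$ is immediate: by \cref{lemma:tensor} the operator $T_i\colon L^2(\R)\tensor V_i\to L^2(\R)\tensor\Lambda$ is bounded, and $W_i$ embeds continuously into $L^2(\R)\tensor V_i$ since the intersection norm dominates each factor. Hence the whole content of the lemma is the gain of a quarter temporal derivative, i.e.\ the bound $\|T_iv\|_{H^{1/4}(\R)\tensor L^2(\Gamma)}\le C\|v\|_{W_i}$.

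For this I would pass to the temporal Fourier transform. Using $H^s(\R)\tensor X\cong H^s(\R,X)$ and the fact that $\hat{T}_i$ acts only in the spatial variable, one checks on the dense subset of algebraic tensors that $\hat{\F}(T_iv)(\xi)=\hat{T}_i\bigl(\hat{\F}v(\xi)\bigr)$ for a.e.\ $\xi\in\R$; moreover $\hat{\F}v(\xi)$ lies in the complexification of $V_i$ for a.e.\ $\xi$ because $v\in L^2(\R)\tensor V_i$, and $\xi\mapsto(1+\xi^2)^{1/4}\hat{\F}v(\xi)$ lies in $L^2(\R,L^2(\Omega_i))$ because $v\in H^{1/2}(\R)\tensor L^2(\Omega_i)$. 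By Plancherel and \cref{eq:sobolevfourier} it then suffices to prove, with $C$ independent of $\xi$, the pointwise bound
\begin{displaymath}
(1+\xi^2)^{1/4}\,\|\hat{T}_iw\|_{L^2(\Gamma)}^2\le C\bigl((1+\xi^2)^{1/2}\|w\|_{L^2(\Omega_i)}^2+\|w\|_{V_i}^2\bigr)\qquad\text{for all }w\in V_i;
\end{displaymath}
substituting $w=\hat{\F}v(\xi)$, splitting into real and imaginary parts, and integrating in $\xi$ then gives $\|T_iv\|_{H^{1/4}(\R)\tensor L^2(\Gamma)}^2\le C\bigl(\|v\|_{H^{1/2}(\R)\tensor L^2(\Omega_i)}^2+\|v\|_{L^2(\R)\tensor V_i}^2\bigr)\le C\|v\|_{W_i}^2$.

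The pointwise bound rests on two ingredients. First, the multiplicative trace inequality on bounded Lipschitz domains, $\|\hat{T}_{\partial\Omega_i}w\|_{L^2(\partial\Omega_i)}^2\le C\|w\|_{L^2(\Omega_i)}\|w\|_{H^1(\Omega_i)}$, obtained by integrating $\operatorname{div}(w^2\Phi)$ over $\Omega_i$ for a bounded Lipschitz vector field $\Phi$ with $\Phi\cdot\nu_i\ge c>0$ on $\partial\Omega_i$, whose existence is guaranteed by \cref{ass:domains}; together with $\|\hat{T}_iw\|_{L^2(\Gamma)}\le\|\hat{T}_{\partial\Omega_i}w\|_{L^2(\partial\Omega_i)}$ (as $\Gamma\subset\partial\Omega_i$) and $\|w\|_{H^1(\Omega_i)}=\|w\|_{V_i}$, this yields $\|\hat{T}_iw\|_{L^2(\Gamma)}^2\le C\|w\|_{L^2(\Omega_i)}\|w\|_{V_i}$. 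Second, Young's inequality in the weighted form $(1+\xi^2)^{1/4}ab\le\tfrac12(1+\xi^2)^{1/2}a^2+\tfrac12 b^2$ applied with $a=\|w\|_{L^2(\Omega_i)}$ and $b=\|w\|_{V_i}$. Chaining the two delivers the displayed pointwise estimate.

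I expect the main obstacle to be exactly this pointwise bound, and more precisely its borderline character: the standard interpolation inequality $\|\hat{T}_{\partial\Omega_i}w\|_{L^2(\partial\Omega_i)}^2\lesssim\|w\|_{L^2(\Omega_i)}^{2(1-\theta)}\|w\|_{H^1(\Omega_i)}^{2\theta}$ is only available for $\theta>1/2$, and then weighting by $(1+\xi^2)^{1/4}$ and applying Young's inequality produces a power of $(1+\xi^2)$ strictly above $1/2$, which is not controlled by $\|v\|_{W_i}$. The multiplicative trace inequality is precisely the endpoint $\theta=\tfrac12$ that pure interpolation misses, so it---or an equivalent space-time trace theorem for Lipschitz cylinders, cf.\ \cite{costabel90}---seems unavoidable. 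The remaining points are routine: that the temporal Fourier transform commutes with the spatial trace, and that $\xi\mapsto\hat{\F}v(\xi)$ is measurable into the complexified spaces, both of which follow from density of algebraic tensors and \cref{lemma:tensor}.
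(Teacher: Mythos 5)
Your argument is correct, but it takes a genuinely different route from the paper's. The paper proves the lemma by citation: after checking (via density of $C^\infty_0(\R)\otimes C^\infty(\overline{\Omega}_i)$ in $L^2(\R)\tensor H^1(\Omega_i)$) that $T_{\partial\Omega_i}$ coincides with the anisotropic trace operator of \cite[Lemma 2.4]{costabel90}, it invokes the boundedness of $T_{\partial\Omega_i}$ from $H^{1/2}(\R)\tensor L^2(\Omega_i)\cap L^2(\R)\tensor H^1(\Omega_i)$ into $H^{1/4}(\R)\tensor L^2(\partial\Omega_i)\cap L^2(\R)\tensor H^{1/2}(\partial\Omega_i)$, and then restricts to $\Gamma\times\R$, handling the $L^2(\R)\tensor\Lambda$ component exactly as you do. You instead reprove the only nontrivial part of that cited result --- the gain of a quarter of a temporal derivative --- directly: partial Fourier transform in time, the multiplicative (endpoint) trace inequality $\|\hat{T}_{\partial\Omega_i}w\|_{L^2(\partial\Omega_i)}^2\le C\|w\|_{L^2(\Omega_i)}\|w\|_{H^1(\Omega_i)}$ on a bounded Lipschitz domain, and a weighted Young inequality; the exponents check out, and this is in fact the mechanism underlying Costabel's own proof. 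Your version therefore buys self-containedness at the price of re-deriving a known trace theorem, and your observation that the endpoint $\theta=1/2$ is exactly what is needed --- no interpolation exponent $\theta>1/2$ would close the argument after weighting by $(1+\xi^2)^{1/4}$ --- is accurate. If you write this up, make explicit the two standard facts you lean on: the existence of a Lipschitz vector field transversal to $\partial\Omega_i$ (guaranteed by \cref{ass:domains}, but worth a reference), and that the multiplicative trace inequality and the Plancherel identity pass to the complexified, Hilbert-space-valued setting.
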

\begin{proof}
It follows from the density of $C^\infty_0(\R)\otimes C^\infty(\Bar{\Omega}_i)$ in $L^2(\R)\tensor H^1(\Omega_i)$; see \cite[Theorem 3.12]{weidmann}, that our definition of $T_{\partial\Omega_i}$ coincide with the definition given in~\cite[Lemma 2.4]{costabel90}. Hence, the restricted trace operator
\begin{displaymath}
 T_{\partial\Omega_i}:H^{1/2}(\R)\tensor L^2(\Omega_i)\cap L^2(\R)\tensor H^1(\Omega_i)\rightarrow
                                   H^{1/4}(\R)\tensor L^2(\Omega_i)\cap L^2(\R)\tensor H^{1/2}(\partial\Omega_i)
\end{displaymath}
is well defined and bounded. 

If $v\in W_i$ then $T_iv=\restr{(T_{\partial\Omega_i}v)}{\Gamma\times\R}\in H^{1/4}(\R)\tensor L^2(\Gamma)$. As $W_i\subset L^2(\R)\tensor V_i $, we have by definition that $T_iv\in L^2(\R)\tensor \Lambda$, i.e., $T_i:W_i\rightarrow Z$. The boundedness of $T_i:W_i\rightarrow Z$ then follows as the operators $T_{\partial\Omega_i}:H^{1/2}(\R)\tensor L^2(\Omega_i)\cap L^2(\R)\tensor H^1(\Omega_i)\rightarrow H^{1/4}(\R)\tensor L^2(\Omega_i)$ and $T_i:L^2(\R)\tensor V_i\rightarrow L^2(\R)\tensor \Lambda$ are bounded.
\qed
\end{proof}

\begin{lemma}\label{lemma:abstracthilbert}
Let $X$ be a separable Hilbert space and $s\in [0,1]$. The Hilbert transform 
\begin{displaymath}
\Hc=\hat{\Hc}\tensor I:H^s(\R)\tensor X\rightarrow H^s(\R)\tensor X
\end{displaymath} 
is then an isomorphism.
\end{lemma}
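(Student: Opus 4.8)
The plan is to exploit the definition of $\Hc=\hat{\Hc}\tensor I$ via \cref{lemma:tensor} and reduce the statement to known properties of $\hat{\Hc}$ on $H^s(\R)$. First I would recall that, by \cref{lemma:tensor} applied with $A=\hat{\Hc}:H^s(\R)\to H^s(\R)$ and $B=I:X\to X$, the operator $\Hc$ is a well-defined bounded linear operator on $H^s(\R)\tensor X$. Next I would construct the candidate inverse: since $\hat{\Hc}^{-1}=-\hat{\Hc}$ as an operator on $H^s(\R)$ (this follows from the stated fact that $\hat{\Hc}^{-1}=-\hat{\Hc}$ on $L^2(\R)$ together with $\hat{\Hc}:H^s(\R)\to H^s(\R)$ being an isomorphism, as noted after \cref{eq:hilbertfourier}), \cref{lemma:tensor} gives a bounded operator $-\hat{\Hc}\tensor I$ on $H^s(\R)\tensor X$.

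It then remains to check that $(-\hat{\Hc}\tensor I)$ and $(\hat{\Hc}\tensor I)$ are mutual inverses. I would verify this first on elementary tensors: for $x\in H^s(\R)$, $y\in X$,
\begin{displaymath}
(-\hat{\Hc}\tensor I)(\hat{\Hc}\tensor I)(x\otimes y)=(-\hat{\Hc})(\hat{\Hc}x)\otimes y=x\otimes y,
\end{displaymath}
and similarly in the other order. By linearity this shows the composition is the identity on the algebraic tensor product $H^s(\R)\otimes X$, which is dense in $H^s(\R)\tensor X$. Since the composition of bounded operators is bounded, and it agrees with the identity on a dense subspace, it equals the identity on all of $H^s(\R)\tensor X$. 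The same argument applied to the reversed composition finishes the proof that $\Hc$ is bijective with bounded inverse, hence an isomorphism.

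There is essentially no hard step here — the statement is a routine transfer of a one-dimensional fact to the tensor setting — but the one point requiring a little care is the justification that $\hat{\Hc}^{-1}=-\hat{\Hc}$ continues to hold at the level of $H^s(\R)$ rather than merely $L^2(\R)$. I would address this by invoking the Fourier characterization \cref{eq:hilbertfourier}: since $\hat{M}_\sgn$ commutes with the Fourier multiplier $(1+(\cdot)^2)^{s/2}$ defining the $H^s(\R)$-norm, and $\hat{M}_\sgn^2=-I$, the identity $\hat{\Hc}^2=-I$ holds verbatim on $H^s(\R)$. Alternatively one can simply note that $H^s(\R)\otimes X$ is dense in $H^s(\R)\tensor X$ by \cite[Theorem 3.12]{weidmann} and carry out the density argument using only $\hat{\Hc}^2=-I$ on $H^s(\R)$, which is all that is needed.
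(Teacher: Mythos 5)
Your proposal is correct and follows essentially the same route as the paper: boundedness of $\Hc=\hat{\Hc}\tensor I$ via \cref{lemma:tensor}, and invertibility from the Fourier characterization \cref{eq:hilbertfourier} of $\hat{\Hc}$ on $H^s(\R)$. The paper compresses the verification that $-\hat{\Hc}\tensor I$ is a two-sided inverse into a single sentence, whereas you spell out the density argument on elementary tensors; the substance is identical.
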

\begin{proof}
According to \cref{lemma:tensor} the operator $\hat{\Hc}\otimes I$ extends to a bounded linear operator $\Hc: H^s(\R)\tensor X\rightarrow H^s(\R)\tensor X$. From \cref{eq:hilbertfourier} it follows that $\Hc:H^s(\R)\tensor X\rightarrow H^s(\R)\tensor X$ is an isomorphism.
\qed
\end{proof}

\begin{lemma}\label{lemma:hilbert}
If \cref{ass:domains} holds then the restricted operators
\begin{displaymath} 
\Hc_i:W_i\rightarrow W_i\quad\text{and}\quad\Hc_\Gamma:Z\rightarrow Z
\end{displaymath} 
are isomorphisms and satisfy $T_i\Hc_iv=\Hc_\Gamma T_iv$ for all $v\in W_i$.
\end{lemma}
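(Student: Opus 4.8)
The plan is to deduce the statement from \cref{lemma:abstracthilbert} by a restriction argument, so the only real work is checking that $\Hc = \hat{\Hc}\tensor I$ maps the intersection spaces $W_i$ and $Z$ into themselves and is an isomorphism there. First I would treat $\Hc_i$. Recall that $W_i = H^{1/2}(\R)\tensor L^2(\Omega_i)\,\cap\, L^2(\R)\tensor V_i$. By \cref{lemma:abstracthilbert} with $X=L^2(\Omega_i)$, $s=1/2$, the operator $\hat{\Hc}\tensor I$ is an isomorphism on $H^{1/2}(\R)\tensor L^2(\Omega_i)$; by \cref{lemma:abstracthilbert} with $X=V_i$, $s=0$, the \emph{same} operator (it agrees with $\hat{\Hc}\tensor I$ on the dense subset $L^2(\R)\otimes V_i$, hence everywhere by continuity) is an isomorphism on $L^2(\R)\tensor V_i$. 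Consequently $\Hc$ maps the intersection $W_i$ bijectively onto itself, and since the norm on $W_i$ is the sum of the two constituent norms, boundedness of $\Hc_i$ and of its inverse $\Hc_i^{-1} = -\Hc_i$ on $W_i$ follows from the corresponding bounds on the two larger spaces. The same reasoning applied to $X=L^2(\Gamma)$, $s=1/4$ and $X=\Lambda$, $s=0$ gives that $\Hc_\Gamma$ is an isomorphism on $Z = H^{1/4}(\R)\tensor L^2(\Gamma)\,\cap\, L^2(\R)\tensor\Lambda$.

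The one point requiring care in the above is the assertion that the two extensions of $\hat{\Hc}\otimes I$ — the one landing in $H^s(\R)\tensor X_1$ and the one landing in $L^2(\R)\tensor X_2$ — actually coincide as maps on the intersection. This is because both are the unique continuous extension of the algebraic operator $\hat{\Hc}\otimes I$ defined on $H^s(\R)\otimes X$ (respectively $L^2(\R)\otimes X$), and on the common dense subset $C_0^\infty(\R)\otimes(\text{smooth functions})$ they give the same pointwise values; one then passes to limits in whichever of the two norms is relevant. I would state this once and reuse it for both $W_i$ and $Z$.

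It remains to prove the intertwining identity $T_i\Hc_i v = \Hc_\Gamma T_i v$ for $v\in W_i$. The clean way is to verify it first on the algebraic tensor product $L^2(\R)\otimes V_i$, which is dense in $W_i$: for $v = x\otimes y$ with $x\in L^2(\R)$, $y\in V_i$, we have $T_i\Hc_i(x\otimes y) = T_i(\hat{\Hc}x\otimes y) = \hat{\Hc}x\otimes \hat{T}_i y$, while $\Hc_\Gamma T_i(x\otimes y) = \Hc_\Gamma(x\otimes \hat{T}_i y) = \hat{\Hc}x\otimes \hat{T}_i y$; the two agree, and the identity extends to all of $W_i$ by linearity together with the continuity of $T_i: W_i\to Z$ (\cref{lemma:tracei}) and of $\Hc_i$, $\Hc_\Gamma$. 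Here one must be slightly careful that the dense subset on which $T_i$ and $T_{\partial\Omega_i}$ are computed via the elementary tensor formula is indeed available; this is exactly the density statement invoked in the proof of \cref{lemma:tracei}, so nothing new is needed.

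The main obstacle, such as it is, is bookkeeping rather than mathematics: one must keep straight which value of $s$ and which Hilbert space $X$ is being fed into \cref{lemma:abstracthilbert} for each of the four constituent spaces, and one must make the "same operator on the intersection" observation rigorous so that the isomorphism property genuinely transfers to $W_i$ and $Z$ with their intersection norms. Once that is in place, both the isomorphism claims and the intertwining relation are immediate, so I would keep the proof short, citing \cref{lemma:tensor,lemma:abstracthilbert,lemma:tracei} and the density of the algebraic tensor products.
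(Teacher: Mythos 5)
Your proposal is correct and follows essentially the same route as the paper: apply \cref{lemma:abstracthilbert} to each constituent space of the intersection (with the appropriate choices of $s$ and $X$), observe that the resulting extensions coincide on the intersection, and verify the intertwining identity on elementary tensors in $L^2(\R)\otimes V_i$ before extending by continuity. One tiny imprecision: $L^2(\R)\otimes V_i$ is dense in $L^2(\R)\tensor V_i$, not in $W_i$; the paper extends the identity by continuity to all of $L^2(\R)\tensor V_i$ and then simply restricts to the subset $W_i\subset L^2(\R)\tensor V_i$, which is the cleaner way to phrase your final step.
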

\begin{proof}
By different choices of $s\in [0,1]$ and separable Hilbert spaces $X$ in~\cref{lemma:abstracthilbert} one obtains the isomorphisms
\begin{align*}
        \Hc_i&:L^2(\R)\tensor L^2(\Omega_i)\rightarrow L^2(\R)\tensor L^2(\Omega_i)\\
        \Hc_i&:H^{1/2}(\R)\tensor L^2(\Omega_i)\rightarrow H^{1/2}(\R)\tensor L^2(\Omega_i)\\
        \Hc_i&:L^2(\R)\tensor V_i\rightarrow L^2(\R)\tensor V_i.
\end{align*}
Since the operators coincide on $W_i$ we have that $\Hc_i:W_i\rightarrow W_i$ is an isomorphism. A similar argument shows that $\Hc_\Gamma:Z\rightarrow Z$ is an isomorphism. For $v\in L^2(\R)\otimes V_i$ we have
\begin{displaymath} 
T_i\Hc_iv=(I\otimes \hat{T}_i)(\hat{\Hc}\otimes I_{L^2(\Omega_i)})v=
                \hat{\Hc}\otimes \hat{T}_iv=
                (\hat{\Hc}\otimes I_{L^2(\Gamma)})(I\otimes \hat{T}_i)v=
                \Hc_\Gamma T_iv,
\end{displaymath} 
and by density the identity holds for $v\in W_i\subset L^2(\R)\tensor V_i$.
\qed
\end{proof}
Finally, let $\varphi\in [0,\pi/2]$ be a parameter to be chosen later and define
\begin{displaymath} 
\Hc_i^\varphi=\cos{(\varphi)}I-\sin{(\varphi)}\Hc_i\quad\text{and}\quad\Hc_\Gamma^\varphi=\cos{(\varphi)}I-\sin{(\varphi)}\Hc_\Gamma.
\end{displaymath} 
It follows from \cref{lemma:hilbert} that $\Hc_i^\varphi:W_i\rightarrow W_i$ and $\Hc_\Gamma^\varphi:Z\rightarrow Z$ are isomorphisms and
\begin{equation}\label{eq:hilbertphi}
T_i\Hc_i^\varphi=\Hc_\Gamma^\varphi T_i.
\end{equation}

\section{Variational space-time formulations}\label{sec:weak}
By \cref{lemma:tensor} the extensions of the spatial gradient
\begin{displaymath} 
\nabla: L^2(\R)\tensor H^1(\Omega_i)\rightarrow L^2(\R)\tensor L^2(\Omega_i)^d
\end{displaymath} 
and the temporal half-derivatives
\begin{displaymath} 
\partial^{1/2}_\pm: H^{1/2}(\R)\tensor L^2(\Omega_i)\rightarrow L^2(\R)\tensor L^2(\Omega_i)
\end{displaymath} 
are all bounded linear operators. Note that we leave out the dependence on $i=1,2$ on the above operators for the sake of readability.

The bilinear forms $a:W\times W\rightarrow \R$ and $a_i: W_i\times W_i\rightarrow\R$, $i=1,2$, are defined by the formulas
\begin{align*}
        a(u, v)&=\int_\R\int_{\Omega}\partial^{1/2}_+u\partial^{1/2}_-v+\nabla u\cdot\nabla v\,\mathrm{d}x\,\mathrm{d}t\quad\text{and}\\
        a_i(u_i, v_i)&=\int_\R\int_{\Omega_i}\partial^{1/2}_+u_i\partial^{1/2}_-v_i+\nabla u_i\cdot\nabla v_i\,\mathrm{d}x\,\mathrm{d}t,
\end{align*}
respectively. For $f\in W^*$ the weak, or variational, formulation of the heat equation~\cref{eq:strong} is to find $u\in W$ such that
\begin{equation}\label{eq:weak}
	    a(u, v)=\langle f, v\rangle\quad \textrm{ for all }v\in W.
\end{equation}
Similarly, for $f\in (W_i^0)^*$ the weak problems on $\Omega_i\times\R$ is to find $u_i\in W_i^0$ such that
\begin{equation}\label{eq:weakai}
a_i(u_i, v_i)=\langle f_i, v_i\rangle\quad \textrm{ for all } v_i\in W_i^0.
\end{equation}
Here, $X^*$ denotes the dual of the Hilbert space $X$ and $\langle\cdot, \cdot\rangle$ denotes the dual pairing in $X^*\times X$. The weak problem \cref{eq:weak} can be derived by multiplying \cref{eq:strong} by $v\in W$, integrating and using the fractional integration by parts formula from \cref{lemma:timeder} extended to the tensor setting. The weak formulations \cref{eq:weakai} follow in the same way.

A bilinear form $a:X\times X\rightarrow \R$ is referred to as coercive in $Y$, where $X\subseteq Y$, if
\begin{displaymath} 
        a(v, v)\geq c\|v\|_Y^2\quad \textrm{for all } v\in X,
\end{displaymath} 
and coercive-equivalent in $X$ with the isomorphism $A:X\rightarrow X$ if
\begin{displaymath} 
        a(v, Av)\geq c\|v\|_X^2\quad \textrm{for all } v\in X.
\end{displaymath} 
The following result follows the argumentation in~\cite[Section 2.8]{larsson15}, which is a summary of the coercive-equivalency idea from~\cite{fontesthesis}. 
\begin{lemma}\label{lemma:ai}
Let~\cref{ass:domains} be valid. The bilinear forms $a_i$ are then bounded in $W_i$, coercive in $L^2(\R)\tensor H^1(\Omega_i)$, and coercive-equivalent in $W_i$ with the isomorphisms $\Hc_i^\varphi$ for a sufficiently small $\varphi>0$. An analogous result holds for the bilinear form~$a$. In particular there exists unique solutions to \cref{eq:weak,eq:weakai}, respectively.
\end{lemma}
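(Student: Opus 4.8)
The plan is to verify the three stated properties of $a_i$ on $W_i$ from the operator identities of \cref{lemma:timeder,lemma:hilbert} together with Poincar\'e's inequality, and then to deduce unique solvability from a generalized Lax--Milgram argument; the assertions for $a$ on $W$ follow verbatim with $\Omega_i,V_i,\Hc_i^\varphi$ replaced by $\Omega,V,\Hc^\varphi$. Boundedness is immediate: the Cauchy--Schwarz inequality gives
\[
|a_i(u,v)|\le\|\partial^{1/2}_+u\|_{L^2(\R)\tensor L^2(\Omega_i)}\|\partial^{1/2}_-v\|_{L^2(\R)\tensor L^2(\Omega_i)}+\|\nabla u\|_{L^2(\R)\tensor L^2(\Omega_i)^d}\|\nabla v\|_{L^2(\R)\tensor L^2(\Omega_i)^d},
\]
and each factor is bounded by $C\|\cdot\|_{W_i}$ because $\partial^{1/2}_\pm\colon H^{1/2}(\R)\tensor L^2(\Omega_i)\to L^2(\R)\tensor L^2(\Omega_i)$ are bounded and $\|\cdot\|_{V_i}$ is equivalent to $v\mapsto\|\nabla v\|_{L^2(\Omega_i)^d}$.

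The common ingredient for the remaining two properties is that the temporal Hilbert transform $\hat{\Hc}$ is skew-adjoint on $L^2(\R)$ — immediate from \cref{eq:hilbertfourier}, since $\xi\mapsto-\mathrm{i}\,\sgn(\xi)$ is purely imaginary — and hence so is its tensor extension $\Hc_i=\hat{\Hc}\tensor I$ on $L^2(\R)\tensor L^2(\Omega_i)$ and on $L^2(\R)\tensor L^2(\Omega_i)^d$. Tensorizing the identities of \cref{lemma:timeder} via \cref{lemma:tensor} yields $\partial^{1/2}_+=-\Hc_i\partial^{1/2}_-$ and $\partial^{1/2}_-\Hc_i=-\partial^{1/2}_+$ on $W_i$, while $\nabla$ commutes with $\Hc_i$. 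Writing the inner products below in $L^2(\R)\tensor L^2(\Omega_i)$ and $L^2(\R)\tensor L^2(\Omega_i)^d$, we get
\[
a_i(v,v)=(\partial^{1/2}_+v,\partial^{1/2}_-v)+\|\nabla v\|^2=-(\Hc_i\partial^{1/2}_-v,\partial^{1/2}_-v)+\|\nabla v\|^2=\|\nabla v\|^2,
\]
so Poincar\'e's inequality (valid under \cref{ass:domains}) gives $a_i(v,v)\ge c\|v\|_{L^2(\R)\tensor H^1(\Omega_i)}^2$, i.e.\ coercivity in $L^2(\R)\tensor H^1(\Omega_i)$. For coercive-equivalence, expand $a_i(v,\Hc_i^\varphi v)=\cos(\varphi)\,a_i(v,v)-\sin(\varphi)\,a_i(v,\Hc_iv)$; skew-adjointness makes the spatial contribution $(\nabla v,\Hc_i\nabla v)$ to $a_i(v,\Hc_iv)$ vanish, and $\partial^{1/2}_-\Hc_i=-\partial^{1/2}_+$ turns its temporal contribution into $-\|\partial^{1/2}_+v\|^2$, so
\[
a_i(v,\Hc_i^\varphi v)=\cos(\varphi)\,\|\nabla v\|_{L^2(\R)\tensor L^2(\Omega_i)^d}^2+\sin(\varphi)\,\|\partial^{1/2}_+v\|_{L^2(\R)\tensor L^2(\Omega_i)}^2.
\]
Since the $W_i$-norm is equivalent to $v\mapsto\bigl(\|\partial^{1/2}_+v\|_{L^2(\R)\tensor L^2(\Omega_i)}^2+\|\nabla v\|_{L^2(\R)\tensor L^2(\Omega_i)^d}^2\bigr)^{1/2}$ — by the equivalent norm on $H^{1/2}(\R)$ recorded in \cref{sec:prel} together with Poincar\'e's inequality, which lets $\|\nabla v\|$ absorb $\|v\|_{L^2(\R)\tensor L^2(\Omega_i)}$ — the right-hand side is bounded below by $c\|v\|_{W_i}^2$ for every $\varphi\in(0,\pi/2)$; in particular coercive-equivalence holds for a sufficiently small $\varphi>0$.

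For existence and uniqueness, observe that, arguing as in \cref{lemma:hilbert}, $\Hc_i^\varphi$ also restricts to an isomorphism of the closed subspace $W_i^0\subset W_i$, so the computation above shows $a_i$ is bounded and coercive-equivalent on $W_i^0$ with the isomorphism $\Hc_i^\varphi$, and similarly $a$ on $W$ with $\Hc^\varphi$. Let $\mathcal{A}_i\colon W_i^0\to(W_i^0)^*$ be the bounded operator with $\langle\mathcal{A}_iu,v\rangle=a_i(u,v)$. Testing $\mathcal{A}_iu$ against $\Hc_i^\varphi u$ and using the boundedness of $\Hc_i^\varphi$ gives $\|\mathcal{A}_iu\|_{(W_i^0)^*}\ge c\|u\|_{W_i^0}$, so $\mathcal{A}_i$ is injective with closed range; moreover, if $a_i(u,w)=0$ for all $u\in W_i^0$, then taking $u=(\Hc_i^\varphi)^{-1}w$ and writing $w=\Hc_i^\varphi u$ yields $0=a_i(u,\Hc_i^\varphi u)\ge c\|u\|_{W_i^0}^2$, hence $w=0$. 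Therefore the range of $\mathcal{A}_i$ is also dense, so $\mathcal{A}_i$ is a bijection and \cref{eq:weakai} has a unique solution; the same argument on $W$ yields unique solvability of \cref{eq:weak}.

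The only non-routine point is the coercive-equivalence step — choosing $\Hc_i^\varphi$ as the relevant isomorphism and verifying that the half-derivative and Hilbert-transform identities of \cref{lemma:timeder,lemma:hilbert} persist under tensorization on all of $W_i$ and $W_i^0$, not merely on the algebraic tensor products. This is the coercive-equivalency mechanism introduced in \cite{fontesthesis} and summarized in \cite[Section~2.8]{larsson15}; once it is available, the remaining estimates are elementary.
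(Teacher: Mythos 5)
Your argument is correct and follows the same skeleton as the paper's proof: Cauchy--Schwarz for boundedness, testing against $\Hc_i^\varphi v$ for coercive-equivalence, the equivalent norm $v\mapsto\bigl(\|\partial^{1/2}_+v\|^2+\|\nabla v\|^2\bigr)^{1/2}$ on $W_i$, and a density argument to transport the identities of \cref{lemma:timeder} from the algebraic tensor product to all of $W_i$. It differs in one substantive step. For the cross term $(\nabla v,\nabla\Hc_i v)$ you use that $\Hc_i$ commutes with $\nabla$ and is skew-adjoint on the real space $L^2(\R)\tensor L^2(\Omega_i)^d$ (both facts are legitimate, the latter reading off the purely imaginary multiplier in \cref{eq:hilbertfourier}), so the term vanishes exactly and you obtain the identity $a_i(v,\Hc_i^\varphi v)=\cos(\varphi)\|\nabla v\|^2+\sin(\varphi)\|\partial^{1/2}_+v\|^2$, hence coercive-equivalence for \emph{every} $\varphi\in(0,\pi/2)$. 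The paper instead only estimates $\abs{(\nabla v,\nabla\Hc_i v)}\leq c\|\nabla v\|^2$ via the boundedness of $\nabla\Hc_i$ together with Poincar\'e's inequality, which is why it must take $\varphi$ small. Your version is sharper for the pure Laplacian, but the paper's cruder bound is the one that survives when the spatial bilinear form is nonsymmetric or has variable coefficients --- a generality the authors explicitly claim in the introduction --- since the cancellation $(w,\Hc_i w)=0$ then no longer applies to the spatial part. Your other deviation, replacing the citation of \cite[Corollary~2.2]{larsson15} by an explicit injectivity-plus-dense-range argument for the operator induced by $a_i$ on $W_i^0$, is also sound; you correctly note the point the paper leaves implicit, namely that $\Hc_i^\varphi$ restricts to an isomorphism of $W_i^0$ (which follows from \cref{lemma:abstracthilbert} with $X=V_i^0$), without which the coercive-equivalence on $W_i$ would not directly yield solvability of \cref{eq:weakai}.
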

\begin{proof}
We will only consider the case with the bilinear form~$a_i$, as the same proof holds for~$a$. Here, the boundedness of $a_i$ on $W_i$ follows directly by the Cauchy--Schwarz inequality together with the boundedness of the operators $\nabla$ and $\partial^{1/2}_\pm$.

To prove the coercivity properties first observe that the identities in~\cref{lemma:timeder} can trivially be validated on $H^{1/2}(\R)\otimes L^2(\Omega_i)$ and extended to $H^{1/2}(\R)\tensor L^2(\Omega_i)$ by density and the boundedness of the related operators. That is, 
\begin{displaymath}
\partial^{1/2}_+ v = -\partial^{1/2}_-\Hc_i v\quad\text{and}\quad(\partial^{1/2}_+ v\, ,\, \partial^{1/2}_- v)_{L^2(\R)\tensor L^2(\Omega_i)}=0
\end{displaymath} 
for every $v\in H^{1/2}(\R)\tensor L^2(\Omega_i)$. Second, as~\cref{ass:domains} is valid, a similar density argument together with the observations regarding the equivalent norms in~\cref{sec:prel} yields the ``extended'' Poincaré's inequality
\begin{displaymath}
\|\nabla v\|_{L^2(\R)\tensor L^2(\Omega_i)^d}\geq c\|v\|_{L^2(\R)\tensor L^2(\Omega_i)}\quad\text{for all }v\in L^2(\R)\tensor H^1(\Omega_i),
\end{displaymath}
and the fact that 
\begin{displaymath}
v\mapsto\bigl(\|\partial^{1/2}_+v\|^2_{H^{1/2}(\R)\tensor L^2(\Omega_i)}+\|\nabla v\|^2_{L^2(\R)\tensor L^2(\Omega_i)^d}\bigr)^{1/2}
\end{displaymath}
is an equivalent norm on $W_i$. Third, by~\cref{lemma:abstracthilbert}, $\nabla\Hc_i$ is a bounded operator from $L^2(\R)\tensor H^1(\Omega_i)$ into $L^2(\R)\tensor L^2(\Omega_i)^d$. With these observations we have the bound
\begin{align*}
a_i(v, \Hc_i^\varphi v)&=\cos(\varphi)(\partial^{1/2}_+ v\, ,\, \partial^{1/2}_- v)_{L^2(\R)\tensor L^2(\Omega_i)}\\
                                  &\quad+\sin(\varphi)(\partial^{1/2}_+ v\, ,\, -\partial^{1/2}_-\Hc_i v)_{L^2(\R)\tensor L^2(\Omega_i)}\\
                                  &\quad+\cos(\varphi)\|\nabla v\|^2_{L^2(\R)\tensor L^2(\Omega_i)^d}
                                     -\sin(\varphi)(\nabla v\, ,\, \nabla \Hc_i v)_{L^2(\R)\tensor L^2(\Omega_i)^d}\\
                                  &\geq\sin(\varphi)\|\partial^{1/2}_+v\|^2_{L^2(\R)\tensor L^2(\Omega_i)}
                                     +\bigl(\cos(\varphi)-c\sin(\varphi)\bigr)\|\nabla v\|^2_{L^2(\R)\tensor L^2(\Omega_i)^d}
 \end{align*}
for every $v\in W_i$. Thus, by choosing a sufficiently small $\varphi>0$ we have the sought after coercive-equivalency
\begin{displaymath}
            a_i(v, \Hc_i^\varphi v)\geq c\|v\|_{W_i}^2\quad\text{for every }v\in W_i,
\end{displaymath}
and choosing $\varphi=0$ yields the coercivity bound 
\begin{displaymath}
a_i(v,v)\geq c\|v\|_{L^2(\R)\tensor H^1(\R)}^2\quad\text{for every }v\in W_i.
\end{displaymath}

The derived properties of $a_i$ yield the existence and uniqueness of a solution for~\cref{eq:weakai}, which follows by~\cite[Corollary~2.2]{larsson15}. \qed
\end{proof}

The next lemma is required in order to consider non-homogeneous boundary values on the interface. 
\begin{lemma}\label{lemma:Fi}
Suppose that \cref{ass:domains} holds. For any $\eta\in Z$ there exists $u_i\in W_i$ such that $T_iu_i=\eta$ and
\begin{displaymath}
a_i(u_i, v_i)=0\quad\text{for all }v_i\in W_i^0.
\end{displaymath}
Moreover, $\|u_i\|_{W_i}\leq C\|\eta\|_Z$.
\end{lemma}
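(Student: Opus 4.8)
The statement is a harmonic-extension (lifting) result: given interface data $\eta\in Z$, I need a function $u_i\in W_i$ with $T_iu_i=\eta$ that is $a_i$-orthogonal to $W_i^0$, with a norm bound. The plan is to split $u_i=w_i+z_i$, where $w_i\in W_i$ is \emph{any} bounded lifting of $\eta$ and $z_i\in W_i^0$ is a correction chosen by a variational problem. First I would invoke a right inverse of the trace operator $T_i:W_i\to Z$: since $T_i$ is bounded (\cref{lemma:tracei}) and surjective (this should be available from the spatial trace theory of \cite{costabel90,EHEE22} lifted to the tensor setting via \cref{lemma:tensor}), the open mapping theorem gives a bounded linear right inverse, so there is $w_i\in W_i$ with $T_iw_i=\eta$ and $\|w_i\|_{W_i}\le C\|\eta\|_Z$.

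Next I would define $z_i\in W_i^0$ as the solution of the variational problem
\begin{displaymath}
a_i(z_i, v_i) = -a_i(w_i, v_i)\quad\text{for all }v_i\in W_i^0.
\end{displaymath}
The right-hand side $v_i\mapsto -a_i(w_i,v_i)$ is a bounded linear functional on $W_i^0$ because $a_i$ is bounded on $W_i\supset W_i^0$ (\cref{lemma:ai}) and $\|w_i\|_{W_i}$ is controlled. By \cref{lemma:ai}, $a_i$ is coercive-equivalent on $W_i^0$ with the isomorphism $\Hc_i^\varphi$ (the same proof applies verbatim on the closed subspace $W_i^0$, since $\Hc_i^\varphi$ maps $W_i^0$ into itself — this follows because $\Hc_i$ restricts to an isomorphism of $L^2(\R)\tensor V_i^0$ by \cref{lemma:abstracthilbert}, and one checks $\Hc_i$ preserves $W_i^0$ just as it preserves $W_i$ in \cref{lemma:hilbert}). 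Hence by the Lax--Milgram-type result \cite[Corollary 2.2]{larsson15} there is a unique $z_i\in W_i^0$ with $\|z_i\|_{W_i}\le C\|w_i\|_{W_i}\le C\|\eta\|_Z$. Setting $u_i=w_i+z_i$, one has $T_iu_i=T_iw_i+T_iz_i=\eta+0=\eta$ (since $W_i^0\subset\ker T_i$, which is the $\Gamma$-analogue of \cref{eq:Wi0}), and for all $v_i\in W_i^0$,
\begin{displaymath}
a_i(u_i,v_i)=a_i(w_i,v_i)+a_i(z_i,v_i)=a_i(w_i,v_i)-a_i(w_i,v_i)=0,
\end{displaymath}
while $\|u_i\|_{W_i}\le\|w_i\|_{W_i}+\|z_i\|_{W_i}\le C\|\eta\|_Z$.

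\textbf{Main obstacle.} The genuinely delicate point is the surjectivity of $T_i:W_i\to Z$ together with a bounded right inverse; everything else is a standard coercivity-plus-Lax--Milgram argument. I would handle this by combining the surjectivity of the spatial trace $\hat T_i:V_i\to\Lambda$ with the surjectivity of the parabolic trace $T_{\partial\Omega_i}$ on the $H^{1/2}$-$L^2$ scale from \cite{costabel90}, reconciling the two via the tensor-product structure and the intersection-space definition of $W_i$ and $Z$ — this is essentially the content underlying \cref{lemma:tracei}, but now used in the surjective direction. A small additional check is that $\ker T_i=W_i^0$ (not merely $W_i^0\subseteq\ker T_i$), which is needed for uniqueness-type statements later but only the inclusion $W_i^0\subseteq\ker T_i$ is actually used here. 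If a clean bounded right inverse is not directly citable, an alternative is to construct $w_i$ explicitly as $w_i=(\text{spatial lifting applied componentwise})$ on a dense tensor set and pass to the closure, but invoking the open mapping theorem on the established bounded surjection is the cleanest route.
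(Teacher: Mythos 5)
Your construction is correct in outline, but it takes a genuinely different route from the paper. The paper does not split $u_i$ into a lifting plus a correction: it extends $\eta$ by zero to $E_i\eta$, checks via \cref{eq:Z} that $E_i\eta\in H^{1/4}(\R)\tensor L^2(\partial\Omega_i)\cap L^2(\R)\tensor H^{1/2}(\partial\Omega_i)$, and then cites Costabel's solvability of the Dirichlet problem for the heat equation (\cite[Theorem 2.9 \& Remark 2.10]{costabel90}) to obtain in one step a weak solution $u_i$ with $T_{\partial\Omega_i}u_i=E_i\eta$; membership in $W_i$ then follows from \cref{eq:Wi}, the $a_i$-orthogonality to $W_i^0$ is automatic because $u_i$ is a weak solution, and the bound comes from Costabel's a priori estimate. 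Your decomposition $u_i=w_i+z_i$ with $z_i\in W_i^0$ solving $a_i(z_i,v_i)=-a_i(w_i,v_i)$ is the standard way one would \emph{prove} such a solvability theorem, and the correction step is fully justified here: \cref{lemma:ai} gives well-posedness of \cref{eq:weakai} for any right-hand side in $(W_i^0)^*$, the functional $v_i\mapsto -a_i(w_i,v_i)$ is in $(W_i^0)^*$ by boundedness of $a_i$, and $W_i^0\subseteq\ker T_i$ follows from \cref{eq:Wi0}. What your approach buys is that it isolates exactly which external input is needed (a bounded right inverse of the trace, rather than solvability of a PDE); what the paper's approach buys is brevity, since the cited theorem delivers the caloric extension and the norm bound simultaneously.

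The one point you should repair is the provenance of the bounded lifting $w_i$. It does \emph{not} follow from ``the spatial trace theory lifted to the tensor setting via \cref{lemma:tensor},'' and your fallback of applying a spatial lifting componentwise on a dense tensor set and closing up would fail: if $\hat R_i:\Lambda\to V_i$ is a spatial right inverse, then $I\tensor\hat R_i$ maps $H^{1/4}(\R)\tensor\Lambda$-type data into something with only $H^{1/4}$ temporal regularity, whereas $W_i$ requires $H^{1/2}(\R)\tensor L^2(\Omega_i)$. The gain of a quarter derivative in time when passing from boundary data in $Z$ to an extension in $W_i$ is precisely the anisotropic phenomenon that a purely tensorized construction cannot produce; it requires the genuinely parabolic extension result (Costabel/Lions--Magenes), which is what the paper invokes directly. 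With that citation in place of the tensor argument, your proof goes through.
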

\begin{proof}
From \cref{eq:Z} we have that $E_i\eta\in H^{1/4}(\R)\tensor L^2(\partial\Omega_i)\cap L^2(\R)\tensor H^{1/2}(\partial\Omega_i)$.
By \cite[Theorem 2.9 \& Remark~2.10]{costabel90}, there exists a weak solution 
\begin{displaymath}
u_i\in H^{1/2}(\R)\tensor L^2(\Omega_i)\cap L^2(\R)\tensor H^1(\Omega_i)
\end{displaymath}
such that $T_{\partial\Omega_i}u_i=E_i\eta$. It follows from \cref{eq:Wi} that $u_i\in W_i$. The bound $\|u_i\|_{W_i}\leq C\|\eta\|_Z$ follows by~\cite[p.~515]{costabel90} together with the definitions of the related norms. \qed
\end{proof}
According to \cref{lemma:Fi} there exists a bounded linear operator $F_i:Z\rightarrow W_i:\eta\mapsto u_i$ such that
\begin{equation}\label{eq:Fi}
	    a(F_i\eta, v_i)=0\quad \textrm{ for all }v_i\in W_i^0,
\end{equation}
with left inverse $T_i$, i.e., $T_iF_i\eta=\eta$ for $\eta\in Z$. From \cref{lemma:ai} there also exists a bounded linear operator $G_i:(W_i^0)^*\rightarrow W_i^0:f_i\mapsto u_i$ such that
\begin{equation}\label{eq:Gi}
	    a_i(G_if_i, v_i)=\langle f_i, v_i\rangle\quad \textrm{for all }v_i\in W_i^0.
\end{equation}
These two operators are the main ingredients for stating the transmission problem and then reformulating the problem on the space-time interface $\Gamma\times\R$. Before moving on to the transmission problem it is necessary to prove that we can ``paste'' together functions in our $H^{1/2}$-framework. This is the purpose of the following lemma.

\begin{lemma}\label{lemma:cutpaste}
Suppose that \cref{ass:domains} holds. Let $(v_1,v_2)\in W_1\times W_2$ and define $v=\{v_i\textrm{ on }\Omega_i\times\R,\, i=1,2\}$. If $T_1v_1=T_2v_2$ then $v\in W$. Conversely, let $v\in W$ and define $v_i=\restr{v}{\Omega_i\times\R}$. Then $T_iv_i\in Z$ and $T_1v_1=T_2v_2$.
\end{lemma}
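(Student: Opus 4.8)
The plan is to prove the two directions separately, using the orthonormal-basis representation \cref{eq:uiorth} to reduce everything to the spatial situation, where the analogous ``cut-and-paste'' lemma for $H^1(\Omega)$ is classical (see~\cite{costabel90} or standard Sobolev theory). Throughout I fix an orthonormal basis $\{x_k\}_{k\geq 0}$ of $L^2(\R)$ and write the relevant functions in the form $\sum_k x_k\otimes z_k$ as in \cref{eq:uiorth}.

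For the forward direction, suppose $(v_1,v_2)\in W_1\times W_2$ with $T_1v_1=T_2v_2$ and set $v=\{v_i\text{ on }\Omega_i\times\R\}$. First I would show $v\in L^2(\R)\tensor V$. Writing $v_i=\sum_k x_k\otimes z_k^i$ with $z_k^i\in V_i$, the hypothesis $T_1v_1=T_2v_2$ together with the orthonormality of $\{x_k\}$ and the identity $T_iv_i=\sum_k x_k\otimes\hat T_iz_k^i$ (valid by continuity of $T_i$) gives $\hat T_1z_k^1=\hat T_2z_k^2$ in $\Lambda$ for every $k$. The classical spatial lemma then yields that $z_k:=\{z_k^i\text{ on }\Omega_i\}$ lies in $V=H_0^1(\Omega)$ with $\|z_k\|_V\leq C(\|z_k^1\|_{V_1}+\|z_k^2\|_{V_2})$; summing the squares over $k$ shows $\sum_k x_k\otimes z_k$ converges in $L^2(\R)\tensor V$ and equals $v$ a.e., so $v\in L^2(\R)\tensor V$. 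Next I must show $v\in H^{1/2}(\R)\tensor L^2(\Omega)$. Here it is cleaner to use a basis $\{x_k\}$ of $L^2(\R)$ adapted to $H^{1/2}(\R)$, or rather to argue via the identification $H^{1/2}(\R)\tensor L^2(\Omega_i)\cong H^{1/2}(\R,L^2(\Omega_i))$ and the fact that the $H^{1/2}(\R,L^2(\Omega))$-norm of $v$ is controlled by $\|v_1\|_{H^{1/2}(\R,L^2(\Omega_1))}+\|v_2\|_{H^{1/2}(\R,L^2(\Omega_2))}$ since $L^2(\Omega)=L^2(\Omega_1)\oplus L^2(\Omega_2)$ orthogonally and the Sobolev--Slobodetskii seminorm in time splits as a sum over the two spatial pieces. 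Combining, $v\in W$.

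For the converse, let $v\in W$ and put $v_i=\restr{v}{\Omega_i\times\R}$. The restriction is well defined and bounded from $L^2(\R)\tensor V$ into $L^2(\R)\tensor H^1(\Omega_i)$ (it is $I\tensor\hat R_i$ for the spatial restriction $\hat R_i$, via \cref{lemma:tensor}), and likewise from $H^{1/2}(\R)\tensor L^2(\Omega)$ into $H^{1/2}(\R)\tensor L^2(\Omega_i)$; moreover $T_{\partial\Omega_i}v_i$ vanishes on $(\partial\Omega_i\setminus\Gamma)\times\R$ because $v\in L^2(\R)\tensor V$ and the exterior boundary of $\Omega_i$ meeting $\partial\Omega$ carries the homogeneous condition, so by \cref{eq:Wi} we get $v_i\in W_i$. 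Then $T_iv_i\in Z$ by \cref{lemma:tracei}. To see $T_1v_1=T_2v_2$, approximate $v$ in $L^2(\R)\tensor V$ by elements of the algebraic tensor product $L^2(\R)\otimes V$, say $v^{(n)}=\sum_{\text{finite}}x_k\otimes y_k$ with $y_k\in V=H_0^1(\Omega)$; for such elements the spatial traces from the two sides agree because $\hat T_1(\restr{y_k}{\Omega_1})=\hat T_2(\restr{y_k}{\Omega_2})$ (both equal $\restr{(\hat T_{\partial\Omega}$-type trace$)}{\Gamma}$ of $y_k$), hence $T_1v_1^{(n)}=T_2v_2^{(n)}$, and passing to the limit using continuity of $T_i:W_i\to Z$ gives $T_1v_1=T_2v_2$.

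The main obstacle I anticipate is the $H^{1/2}$-in-time part of the forward direction: one must check that pasting two functions, each in $H^{1/2}(\R,L^2(\Omega_i))$ and agreeing only in spatial trace, produces a function in $H^{1/2}(\R,L^2(\Omega))$ \emph{without} incurring the pathology mentioned in the introduction (the $H^1(\R,H^{-1})$ case fails precisely here). The point that saves us is that the gluing is purely spatial and happens at the level of the fixed time slices: for fixed $t$, $v(t)\in V$ is obtained by the classical spatial pasting of $v_1(t),v_2(t)$, and the temporal Gagliardo seminorm $\iint |v(t)-v(s)|_{L^2(\Omega)}^2/|t-s|^2\,dt\,ds$ splits orthogonally as the sum of the two corresponding seminorms over $\Omega_1$ and $\Omega_2$ because $\|w\|_{L^2(\Omega)}^2=\|w\|_{L^2(\Omega_1)}^2+\|w\|_{L^2(\Omega_2)}^2$. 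So no regularity in time is lost, in contrast to the negative-order spatial setting. I would make this rigorous either by the orthogonal direct-sum decomposition $L^2(\R)\tensor L^2(\Omega)=\bigl(L^2(\R)\tensor L^2(\Omega_1)\bigr)\oplus\bigl(L^2(\R)\tensor L^2(\Omega_2)\bigr)$ restricted to the $H^{1/2}$-subspace, or by invoking \cite[Theorem 2.9]{costabel90} applied with boundary data $E_i(T_iv_i)\in H^{1/4}(\R)\tensor L^2(\partial\Omega_i)\cap L^2(\R)\tensor H^{1/2}(\partial\Omega_i)$ exactly as in the proof of \cref{lemma:Fi}, comparing the resulting lifting with $v$ on each side.
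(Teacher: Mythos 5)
Your proposal is correct and follows essentially the same route as the paper: reduce both directions to the classical spatial cut-and-paste lemma via the orthonormal-basis representation \cref{eq:uiorth} (or, equivalently, density of the algebraic tensor product), and handle the $H^{1/2}$-in-time regularity by the orthogonal splitting $\|w\|_{L^2(\Omega)}^2=\|w\|_{L^2(\Omega_1)}^2+\|w\|_{L^2(\Omega_2)}^2$, which is exactly the observation the paper uses (with an orthonormal basis of $H^{1/2}(\R)$ tensored with $L^2(\Omega_i)$) and which you correctly identify as the reason the $H^1(\R,H^{-1})$ pathology is avoided.
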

\begin{proof}
Assume that $v_i\in W_i$, $i=1,2$, and $T_1v_1=T_2v_2$, then 
\begin{displaymath}
v=\{v_i\textrm{ on }\Omega_i\times\R,\, i=1,2\}\in L^2(\R)\tensor L^2(\Omega).
\end{displaymath}
In order to prove the $L^2(\R)\tensor V$-regularity of $v$, let $\{x_k\}_{k\geq 0}$ be an orthonormal basis of $L^2(\R)$ and $\{(y_i)_\ell\}_{\ell\geq 0}$ be orthonormal bases of $V_i$, $i=1,2$. The corresponding elements $\{(z_i)_k\}_{k\geq 0}\subset V_i$ are defined as in~\cref{eq:uiorth}. This yields the representation
\begin{displaymath}
v_i=\sum_{k=0}^\infty x_k \otimes (z_i)_k
\end{displaymath}
and the equality 
\begin{displaymath}
 0=(T_1v_1-T_2v_2\, ,\,x_k\otimes (\hat{T}(z_1)_k-\hat{T}_2(z_2)_k)_{L^2(\R)\tensor L^2(\Gamma)}=\|\hat{T}_1(z_1)_k-\hat{T}_2(z_2)_k\|^2_{ L^2(\Gamma)}.
\end{displaymath}
That is,  $\hat{T}_1(z_1)_k=\hat{T}_2(z_2)_k$, for all $k=0,1,\ldots$, and from \cite[Lemma 4.6]{EHEE22} it follows that $z_k=\{(z_i)_k \textrm{ on }\Omega_i\times\R,\, i=1,2\}\in V$. We also have the identification
\begin{displaymath}
v=\sum_{k=0}^\infty x_k \otimes z_k\quad\text{in }L^2(\R)\tensor L^2(\Omega).
\end{displaymath}
As $\|z_k\|^2_{V}=\|(z_1)_k\|^2_{V_1}+\|(z_2)_k\|^2_{V_2}$, one has that $\{\sum_{k=0}^n x_k \otimes z_k\}_{n\geq0}$ is a Cauchy sequence in $L^2(\R)\tensor V$. Hence, $v$ is also an element in $L^2(\R)\tensor V$. 

The $H^{1/2}(\R)\tensor L^2(\Omega)$-regularity of $v$ follows in a similar fashion, by expanding the $v_i$ elements in terms of an orthonormal basis $\{x_k\}_{k\geq 0}$ of $H^{1/2}(\R)$ and  an orthonormal basis $\{(y_i)_k\}_{k\geq 0}$ of $L^2(\Omega_i)$ together with the observation that $\|z_k\|^2_{L^2(\Omega)}=\|(z_1)_k\|^2_{L^2(\Omega_1)}+\|(z_2)_k\|^2_{L^2(\Omega_2)}$. In conclusion, $v\in W$.

Conversely, let $v\in W$. From~\cref{eq:Wi} we get that $v_i=\restr{v}{\Omega_i\times\R}$ is an element in $W_i$ and, by~\cref{lemma:tracei}, $T_iv_i\in Z$. Let $\{x_k\}_{k\geq 0}$ be an orthonormal basis of $L^2(\R)$ and $\{y_\ell\}_{\ell\geq 0}$ an orthonormal basis of $V$. The related elements $\{z_k\}_{k\geq 0}\subset V$ are given by~\cref{eq:uiorth} and we obtain
\begin{displaymath}
v=\sum_{k=0}^\infty x_k \otimes z_k\quad\text{and}\quad v_i=\sum_{k=0}^\infty x_k \otimes (z_i)_k,
\end{displaymath}
where $(z_i)_k=\restr{z_k}{\Omega_i\times\R}\in V_i$. By~\cite[Lemma 4.5]{EHEE22} it follows that $\hat{T}_1(z_1)_k=\hat{T}_2(z_2)_k$ for every $k=0,1,\ldots$ Therefore
\begin{displaymath}
T_1v_1=\sum_{k,\ell=0}^\infty x_k \otimes \hat{T}_1(z_1)_k=\sum_{k,\ell=0}^\infty x_k \otimes \hat{T}_2(z_2)_k=T_2v_2,
\end{displaymath}
and the sought after equality is obtained. \qed
\end{proof}

The existence and uniqueness theory holds for any $f\in W^*$, but for the convergence analysis we need a stricter requirement. For simplicity we assume the following:
\begin{assumption}\label{ass:f}
The source term $f$ is an element in $L^2(\Omega\times\R)$.
\end{assumption}
With \cref{ass:f} we can define $f_i=\restr{f}{\Omega_i\times\R}$, which we will assume to hold for the remainder of the analysis. The weak transmission problem is then to find $(u_1, u_2)\in W_1\times W_2$ such that
\begin{equation}\label{eq:weaktran}
	\left\{\begin{aligned}
	     a_i(u_i, v_i)&=(f_i, v_i)_{L^2(\Omega_i)} & & \text{for all } v_i\in W_i^0,\, i=1,2,\\
	     T_1u_1&=T_2u_2, & &\\
	     \textstyle\sum_{i=1}^2 a_i(& u_i, F_i\mu)-(f_i, F_i\mu)_{L^2(\Omega_i)}=0 & &\text{for all }\mu\in Z. 
	\end{aligned}\right.
\end{equation}
\begin{lemma}\label{lemma:tranequiv}
Suppose that \cref{ass:domains,ass:f} hold. Then the weak heat equation is equivalent to the weak transmission problem in the following way: If $u$ solves \cref{eq:weak} then $(u_1, u_2)=(\restr{u}{ \Omega_1\times\R}, \restr{u}{ \Omega_2\times\R})$ solves \cref{eq:weaktran}. Conversely, if $(u_1, u_2)$ solves \cref{eq:weaktran} then $u=\{u_i \textrm{ on } \Omega_i\times\R\, , i=1,2\}$ solves \cref{eq:weak}. In particular, there exists a unique solution to \cref{eq:weaktran}.
\end{lemma}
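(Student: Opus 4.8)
The plan is to establish the two implications separately, using the ``pasting'' lemma (\cref{lemma:cutpaste}) as the structural bridge and the fractional integration by parts from \cref{lemma:timeder} (in its tensor-extended form) to move between the global bilinear form $a$ and the local forms $a_i$. The key observation, to be used throughout, is the additivity
\begin{displaymath}
a(u,v)=a_1(\restr{u}{\Omega_1\times\R},\restr{v}{\Omega_1\times\R})+a_2(\restr{u}{\Omega_2\times\R},\restr{v}{\Omega_2\times\R})\quad\text{for }u,v\in W,
\end{displaymath}
which follows directly from the definitions of $a$ and $a_i$ together with $\overline{\Omega}=\overline{\Omega}_1\cup\overline{\Omega}_2$ and $\Omega_1\cap\Omega_2=\emptyset$, so that the spatial integrals split.

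\emph{From \cref{eq:weak} to \cref{eq:weaktran}.} Given a solution $u\in W$, set $u_i=\restr{u}{\Omega_i\times\R}$. By \cref{eq:Wi} each $u_i\in W_i$, and by \cref{lemma:cutpaste} we get $T_1u_1=T_2u_2$, which is the second line of \cref{eq:weaktran}. For the first line, take any $v_i\in W_i^0$ and extend it by zero to $\tilde v_i\in W$; this extension lies in $W$ by \cref{lemma:cutpaste} since its trace on $\Gamma\times\R$ vanishes on both sides (using that $W_i^0\subset W_i$ has zero trace, which is \cref{eq:Wi0}). Testing \cref{eq:weak} with $\tilde v_i$ and using additivity of $a$ together with the fact that $\tilde v_i$ vanishes on the other subdomain yields $a_i(u_i,v_i)=\langle f,\tilde v_i\rangle=(f_i,v_i)_{L^2(\Omega_i)}$. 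For the third line, take any $\mu\in Z$ and test \cref{eq:weak} with $v=\{F_i\mu\text{ on }\Omega_i\times\R,\,i=1,2\}$; this $v$ lies in $W$ by \cref{lemma:cutpaste}, precisely because $T_1F_1\mu=\mu=T_2F_2\mu$ by the left-inverse property $T_iF_i=I$. Additivity of $a$ then gives $\sum_i a_i(u_i,F_i\mu)=\langle f,v\rangle=\sum_i(f_i,F_i\mu)_{L^2(\Omega_i)}$, which is the third line.

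\emph{From \cref{eq:weaktran} to \cref{eq:weak}.} Given $(u_1,u_2)$ solving \cref{eq:weaktran}, the condition $T_1u_1=T_2u_2$ lets us apply \cref{lemma:cutpaste} to conclude $u=\{u_i\text{ on }\Omega_i\times\R\}\in W$. To verify \cref{eq:weak}, fix an arbitrary $v\in W$ and decompose it. The natural splitting is $v = v^0 + F_1(T_1v) + F_2(T_2v)$ in spirit, but more carefully: set $\mu=T_1v=T_2v\in Z$ (equal by \cref{lemma:cutpaste}), let $w=\{F_i\mu\text{ on }\Omega_i\times\R\}\in W$, and put $v^0=v-w$. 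Then $T_i(v^0|_{\Omega_i\times\R})=\mu-\mu=0$, so by \cref{eq:Wi0} the restriction $v_i^0:=\restr{v^0}{\Omega_i\times\R}\in W_i^0$. Now expand $a(u,v)=a(u,v^0)+a(u,w)$ using additivity: the first term equals $\sum_i a_i(u_i,v_i^0)=\sum_i(f_i,v_i^0)_{L^2(\Omega_i)}$ by the first line of \cref{eq:weaktran}, and the second equals $\sum_i a_i(u_i,F_i\mu)=\sum_i(f_i,F_i\mu)_{L^2(\Omega_i)}$ by the third line. Summing, $a(u,v)=\sum_i(f_i,v_i^0+F_i\mu)_{L^2(\Omega_i)}=\sum_i(f_i,v_i)_{L^2(\Omega_i)}=(f,v)_{L^2(\Omega\times\R)}=\langle f,v\rangle$, as required. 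Finally, existence and uniqueness for \cref{eq:weaktran} follow from the established equivalence together with \cref{lemma:ai}, which gives a unique solution to \cref{eq:weak}.

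\emph{Main obstacle.} The delicate points are all bookkeeping about which functions legitimately lie in $W$: every time we test \cref{eq:weak} against a function built from pieces (the zero-extension of $v_i\in W_i^0$, the glued $F_i\mu$, the glued $v^0$), we must invoke \cref{lemma:cutpaste} and check the matching-trace hypothesis on $\Gamma\times\R$. This is exactly the subtlety the $H^{1/2}$-framework was introduced to handle, and the identities \cref{eq:Wi0}, \cref{eq:Wi}, \cref{eq:Z} are what make each verification routine. The only mild care beyond this is confirming $\langle f,v\rangle=(f,v)_{L^2(\Omega\times\R)}$ under \cref{ass:f} and that the $L^2$ pairing splits additively over the subdomains, which is immediate.
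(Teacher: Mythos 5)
Your proposal is correct and is precisely the argument the paper intends: the paper itself gives no written proof but remarks that the result "follows by the same argument as [Quarteroni--Valli, Lemma 1.2.1] and requires that \cref{lemma:cutpaste} holds," which is exactly the decomposition $v=v^0+\{F_i\mu\}$ and the trace-matching bookkeeping you carry out. Nothing to add.
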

\begin{remark}
The proof of \cref{lemma:tranequiv} follows by the same argument as~\cite[Lemma 1.2.1]{quarteroni} and requires that \cref{lemma:cutpaste} holds. This is one of the reason that the analysis is performed in the $H^{1/2}$-setting. As already stated in the introduction, analogous results to \cref{lemma:cutpaste} are not always true; see for instance~\cite[Example 2.14]{costabel90} for a counterexample in $H^1(\R)\tensor H^{-1}(\Omega)\cap L^2(\R)\tensor H^1(\Omega)$.
\end{remark}

The weak form of the space-time Robin--Robin method is the following: Let $u_2^0\in W_2$ be an initial guess and $s>0$ be the fixed method parameter. For each $n=1,2,\ldots$ find functions $(u_1^n, u_2^n)\in W_1\times W_2$ such that
\begin{equation}\label{eq:robin}
    \left\{\begin{aligned}
    	&a_1(u_1^{n+1}, v_1)=(f_1, v_1)_{L^2( \Omega_1\times\R)} & &\text{for all } v_1\in W_1^0,\\
         &a_1(u_1^{n+1}, F_1\mu)-(f_1, F_1\mu)_{L^2( \Omega_1\times\R)}+a_2(u_2^n, F_2\mu)  & & \\
    			&\quad-(f_2, F_2\mu)_{L^2( \Omega_2\times\R)}=s(T_{2}u_2^n-T_{1}u_1^{n+1}, \mu)_{L^2(\Gamma\times\R)} & &\text{for all }\mu\in Z,\\[5pt]
    	&a_2(u_2^{n+1}, v_2)=(f_2, v_2)_{L^2( \Omega_2\times\R)} & &\text{for all } v_2\in W_2^0,\\
    	&a_2(u_2^{n+1}, F_2\mu)-(f_2, F_2\mu)_{L^2( \Omega_2\times\R)}+a_1(u_1^{n+1}, F_1\mu) & &\\
    			&\quad-(f_1, F_1\mu)_{L^2( \Omega_1\times\R)}=s(T_{1}u_1^{n+1}-T_{2}u_2^{n+1}, \mu)_{L^2(\Gamma\times\R)} & &\text{for all }\mu\in Z.
    \end{aligned}\right.
\end{equation}
The weak forms of the Dirichlet--Neumann and Neumann--Neumann methods can be given in a similar form, but are not stated here since they are the same as their elliptical counterparts; see~\cite[Chapter 1.3]{quarteroni}.

\section{Time-dependent Steklov--Poincar\'e operators}\label{sec:SP}
The goal is now to reformulate the transmission problem and the space-time domain decomposition methods to problems and methods given on the interface $\Gamma\times\R$. In order to do so, we first introduce the time-dependent Steklov--Poincaré operators $S_i:Z\rightarrow Z^*$ and the functionals $\chi_i\in Z^*$ by
\begin{displaymath}
\langle S_i\eta, \mu\rangle=a_i(F_i\eta, F_i\mu)\quad\text{and}
\quad\langle\chi_i, \mu\rangle=(f_i, F_i\mu)_{L^2(\Omega_i\times\R)}-a_i(G_if_i, F_i\mu),
\end{displaymath}
respectively. We also write $S=S_1+S_2$ and $\chi=\chi_1+\chi_2$. We can then introduce the weak Steklov--Poincaré equation $S\eta=\chi$ in $Z^*$ or, equivalently,
\begin{equation}\label{eq:speq}
	   \sum_{i=1}^2 \langle S_i\eta, \mu\rangle=\sum_{i=1}^2 \langle\chi_i, \mu\rangle\quad \textrm{for all } \mu\in Z.
\end{equation}
\begin{remark}
The Steklov--Poincaré operators do not depend on the choice of extension $F_i$. For an arbitrary extension $R_i:Z\rightarrow W_i$ such that $T_iR_i\mu=\mu$ we have, by \cref{eq:Wi0}, that $F_i\mu-R_i\mu\in W_i^0$.  Combining this with \cref{eq:Fi} implies that
\begin{displaymath}
\langle S_i\eta, \mu\rangle=a_i(F_i\eta, F_i\mu)=a_i(F_i\eta, F_i\mu-R_i\mu)+a_i(F_i\eta, R_i\mu)=a_i(F_i\eta, R_i\mu).
\end{displaymath}
\end{remark}

The Steklov--Poincaré operators have the corresponding properties as the bilinear forms $a_i$ in \cref{lemma:ai}.
\begin{lemma}\label{lemma:sp}
Suppose that \cref{ass:domains,ass:f} hold. Then the time-dependent Steklov--Poincaré operator $S_i$ is bounded and satisfies the coercivity bound
\begin{equation}\label{eq:weakcoer}
\langle S_i\eta, \eta\rangle\geq c\|F_i\eta\|_{L^2(\R)\tensor H^1(\Omega_i)}^2 \quad \text{for all }\eta\in Z.
\end{equation}
Moreover, $S_i$ is coercive-equivalent in $Z$ with the isomorphism $\Hc_\Gamma^\varphi$ for $\varphi>0$ small enough. That is,
\begin{displaymath}
\langle S_i\eta, \Hc_\Gamma^\varphi\eta\rangle\geq c\|\eta\|_Z^2 \quad \text{for all }\eta\in Z.
\end{displaymath}
Analogous results hold for $S$.
\end{lemma}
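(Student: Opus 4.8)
The plan is to transfer the three properties of the bilinear forms $a_i$ from \cref{lemma:ai} to the operators $S_i$ by means of the harmonic extension operator $F_i$ and its left inverse $T_i$. Boundedness is immediate: for $\eta,\mu\in Z$ the Cauchy--Schwarz inequality together with the boundedness of $a_i$ on $W_i$ (\cref{lemma:ai}) and of $F_i:Z\rightarrow W_i$ (\cref{lemma:Fi}) gives $|\langle S_i\eta,\mu\rangle|=|a_i(F_i\eta,F_i\mu)|\leq C\|F_i\eta\|_{W_i}\|F_i\mu\|_{W_i}\leq C\|\eta\|_Z\|\mu\|_Z$, so $\|S_i\eta\|_{Z^*}\leq C\|\eta\|_Z$. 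The coercivity bound \cref{eq:weakcoer} is equally direct, since $\langle S_i\eta,\eta\rangle=a_i(F_i\eta,F_i\eta)$ and $a_i$ is coercive in $L^2(\R)\tensor H^1(\Omega_i)$ by \cref{lemma:ai}.

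The coercive-equivalence is the part that requires an actual argument, because $F_i$ need not commute with the isomorphism $\Hc_i^\varphi$. The idea is to compare the two elements of $W_i$ that share the trace $\Hc_\Gamma^\varphi\eta$ on $\Gamma\times\R$, namely $F_i\Hc_\Gamma^\varphi\eta$ and $\Hc_i^\varphi F_i\eta$. For the latter, the commutation relation \cref{eq:hilbertphi} gives $T_i(\Hc_i^\varphi F_i\eta)=\Hc_\Gamma^\varphi T_iF_i\eta=\Hc_\Gamma^\varphi\eta$, so $\Hc_i^\varphi F_i\eta$ is an extension of $\Hc_\Gamma^\varphi\eta$ and, exactly as in the remark following \cref{eq:speq} (which uses \cref{eq:Wi0} and the fact that the $T_{\partial\Omega_i}$-trace of a function in $W_i$ already vanishes on $(\partial\Omega_i\setminus\Gamma)\times\R$), we have $w:=\Hc_i^\varphi F_i\eta-F_i\Hc_\Gamma^\varphi\eta\in W_i^0$. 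Since $F_i\eta$ is $a_i$-harmonic in the sense of \cref{eq:Fi}, $a_i(F_i\eta,w)=0$, and therefore
\begin{displaymath}
\langle S_i\eta,\Hc_\Gamma^\varphi\eta\rangle=a_i(F_i\eta,F_i\Hc_\Gamma^\varphi\eta)=a_i(F_i\eta,\Hc_i^\varphi F_i\eta)\geq c\|F_i\eta\|_{W_i}^2,
\end{displaymath}
the last inequality being the coercive-equivalence of $a_i$ in $W_i$ from \cref{lemma:ai}, valid for $\varphi>0$ small enough. Finally, since $T_i:W_i\rightarrow Z$ is bounded (\cref{lemma:tracei}) and $T_iF_i\eta=\eta$, we get $\|\eta\|_Z\leq C\|F_i\eta\|_{W_i}$, which combined with the previous display yields $\langle S_i\eta,\Hc_\Gamma^\varphi\eta\rangle\geq c\|\eta\|_Z^2$.

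For $S=S_1+S_2$ all three statements follow by summing: boundedness and the coercivity bound are additive, and choosing $\varphi$ small enough that the coercive-equivalence holds for both $S_1$ and $S_2$ gives $\langle S\eta,\Hc_\Gamma^\varphi\eta\rangle=\sum_{i=1}^2\langle S_i\eta,\Hc_\Gamma^\varphi\eta\rangle\geq c\|\eta\|_Z^2$. The main obstacle is precisely the non-commutativity of $F_i$ with $\Hc_i^\varphi$: the whole argument hinges on replacing $F_i\Hc_\Gamma^\varphi\eta$ by $\Hc_i^\varphi F_i\eta$ modulo $W_i^0$, which is legitimate because $F_i\eta$ is $a_i$-orthogonal to $W_i^0$ and because the trace commutation \cref{eq:hilbertphi} matches the boundary values.
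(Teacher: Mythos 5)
Your proposal is correct and follows essentially the same route as the paper: boundedness and the weak coercivity bound are read off from \cref{lemma:ai,lemma:Fi}, and the coercive-equivalence is obtained by showing $F_i\Hc_\Gamma^\varphi\eta-\Hc_i^\varphi F_i\eta\in W_i^0$ via the trace commutation \cref{eq:hilbertphi} and \cref{eq:Wi0}, then using the $a_i$-orthogonality \cref{eq:Fi} and the boundedness of $T_i$ from \cref{lemma:tracei}. No gaps.
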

\begin{proof}
Throughout the proof, let $\eta,\mu\in Z$ be arbitrary elements. The boundedness of $S_i$ is proved by \cref{lemma:ai,lemma:Fi}, since
\begin{displaymath}
|\langle S_i\eta, \mu\rangle| = |a_i(F_i\eta, F_i\mu)|\leq C\|F_i\eta\|_{W_i}\|F_i\mu\|_{W_i}\leq C\|\eta\|_Z\|\mu\|_Z.
\end{displaymath}
We also obtain the coercivity bound
\begin{displaymath}
\langle S_i\eta, \eta\rangle=a_i(F_i\eta, F_i\eta)\geq c\|F_i\eta\|_{L^2(\R)\tensor H^1(\Omega_i)}^2,
\end{displaymath}
using \cref{lemma:ai}. From \cref{eq:hilbertphi} we have
\begin{displaymath}
T_i(F_i\Hc_{\Gamma}^\varphi-\Hc_i^\varphi F_i)\eta=T_iF_i\Hc_{\Gamma}^\varphi\eta-T_i\Hc_i^\varphi F_i\eta
=\Hc_{\Gamma}^\varphi\eta-\Hc_\Gamma^\varphi T_iF_i\eta=0
\end{displaymath}
and therefore $(F_i\Hc_{\Gamma}^\varphi-\Hc_i^\varphi F_i)\eta\in W_i^0$ by \cref{eq:Wi0}. This together with \cref{eq:Fi} implies that
\begin{align*}
 a_i(F_i\eta, F_i\Hc_{\Gamma}^\varphi\eta) &=
                  a_i\bigl(F_i\eta, (F_i\Hc_{\Gamma}^\varphi-\Hc_i^\varphi F_i)\eta\bigr)+a_i(F_i\eta, \Hc_i^\varphi F_i\eta)\\
                  &=a_i(F_i\eta, \Hc_i^\varphi F_i\eta).
\end{align*}
Then \cref{lemma:tracei,lemma:ai} give that
\begin{displaymath}
\langle S_i\eta, \Hc_\Gamma^\varphi\eta\rangle=a_i(F_i\eta, \Hc_i^\varphi F_i\eta)\geq c\|F_i\eta\|_{W_i}^2\geq c\|T_iF_i\eta\|_Z^2=c\|\eta\|_Z^2.
\end{displaymath}
The bounds for $S$ follow by summing the bounds for $S_1$ and $S_2$. 
\qed
\end{proof}
From \cref{lemma:density} we have the Gelfand triple
\begin{displaymath}
Z\hookrightarrow L^2(\Gamma\times\R)\cong L^2(\Gamma\times\R)^*\hookrightarrow Z^*.
\end{displaymath}
We introduce the Riesz isomorphism $J: L^2(\Gamma\times\R)\rightarrow L^2(\Gamma\times\R)^*$ and note that the Gelfand triple implies the identity
\begin{equation}\label{eq:riesz}
\langle J\eta, \mu\rangle=(\eta, \mu)_{L^2(\Gamma\times\R)}\quad\text{ for all } \eta\in L^2(\Gamma\times\R),\,\mu\in Z.
\end{equation}
We trivially also have the bounds
\begin{equation}\label{eq:rieszbound}
\langle J\eta, \mu\rangle\leq\|\eta\|_{L^2(\Gamma\times\R)}\|\mu\|_{L^2(\Gamma\times\R)}\quad\text{ for all } \eta\in L^2(\Gamma\times\R),\,\mu\in Z
\end{equation}
and
\begin{equation}\label{eq:rieszcoer}
\langle J\eta, \eta\rangle\geq\|\eta\|_{L^2(\Gamma\times\R)}^2\quad\text{ for all } \eta\in Z.
\end{equation}
\begin{theorem}\label{thm:speq}
Suppose that \cref{ass:domains,ass:f} hold and let $s\geq 0$. Then the operators $sJ+S_i,sJ+S: Z\rightarrow Z^*$ are isomorphisms. In particular, there exists a unique solution to the weak Steklov--Poincaré equation \cref{eq:speq}.
\end{theorem}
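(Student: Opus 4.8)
The plan is to realise each of the operators $sJ+S_i$ and $sJ+S$ as the operator induced by a bilinear form on $Z$ that is bounded and coercive-equivalent with the \emph{same} twisted Hilbert transform $\Hc_\Gamma^\varphi$ used in \cref{lemma:sp}, and then to conclude by the Lax--Milgram theorem exactly as in the proof of \cref{lemma:ai}. Fix $s\ge 0$ and define $b_i:Z\times Z\to\R$ by
\begin{displaymath}
b_i(\eta,\mu)=\langle(sJ+S_i)\eta,\mu\rangle=s(\eta,\mu)_{L^2(\Gamma\times\R)}+\langle S_i\eta,\mu\rangle,
\end{displaymath}
where the first equality uses \cref{eq:riesz}; define $b$ analogously with $S$ in place of $S_i$. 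Boundedness of $b_i$ on $Z\times Z$ is immediate from the boundedness of $S_i$ in \cref{lemma:sp}, the bound \cref{eq:rieszbound}, and the continuous embedding $Z\hookrightarrow L^2(\Gamma\times\R)$.

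For the coercive-equivalence I would test with $\Hc_\Gamma^\varphi$ for a $\varphi>0$ small enough that \cref{lemma:sp} applies. The key observation is that the temporal Hilbert transform is skew-adjoint on $L^2(\R)$: by \cref{eq:hilbertfourier} its Fourier multiplier $-\mathrm{i}\,\sgn(\xi)$ is purely imaginary, so $\hat{\Hc}^*=-\hat{\Hc}$, and hence $\Hc_\Gamma=\hat{\Hc}\tensor I$ is skew-adjoint on $L^2(\Gamma\times\R)=L^2(\R)\tensor L^2(\Gamma)$. Consequently $(\eta,\Hc_\Gamma\eta)_{L^2(\Gamma\times\R)}=0$, and \cref{eq:riesz} gives
\begin{displaymath}
\langle J\eta,\Hc_\Gamma^\varphi\eta\rangle=\bigl(\eta,\cos(\varphi)\eta-\sin(\varphi)\Hc_\Gamma\eta\bigr)_{L^2(\Gamma\times\R)}=\cos(\varphi)\|\eta\|_{L^2(\Gamma\times\R)}^2\ge 0.
\end{displaymath}
Combined with the coercive-equivalence of $S_i$ from \cref{lemma:sp}, this yields, for all $\eta\in Z$,
\begin{displaymath}
b_i(\eta,\Hc_\Gamma^\varphi\eta)=s\cos(\varphi)\|\eta\|_{L^2(\Gamma\times\R)}^2+\langle S_i\eta,\Hc_\Gamma^\varphi\eta\rangle\ge\langle S_i\eta,\Hc_\Gamma^\varphi\eta\rangle\ge c\|\eta\|_Z^2,
\end{displaymath}
since $s\ge 0$ and $\cos(\varphi)>0$; the identical estimate holds for $b$.

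It remains to deduce the isomorphism property. Since $\Hc_\Gamma^\varphi:Z\to Z$ is an isomorphism, the form $\tilde b_i(\eta,\mu)=b_i(\eta,\Hc_\Gamma^\varphi\mu)$ is bounded and, by the last display, coercive on $Z$, so by the Lax--Milgram theorem it defines an isomorphism $\tilde B_i:Z\to Z^*$. From $\langle\tilde B_i\eta,\mu\rangle=\langle(sJ+S_i)\eta,\Hc_\Gamma^\varphi\mu\rangle=\langle(\Hc_\Gamma^\varphi)^*(sJ+S_i)\eta,\mu\rangle$ we read off $\tilde B_i=(\Hc_\Gamma^\varphi)^*(sJ+S_i)$, and since the Banach-space adjoint $(\Hc_\Gamma^\varphi)^*:Z^*\to Z^*$ is again an isomorphism, so is $sJ+S_i=\bigl((\Hc_\Gamma^\varphi)^*\bigr)^{-1}\tilde B_i$; the same argument applies to $sJ+S$. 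Specialising to $s=0$ shows $S:Z\to Z^*$ is an isomorphism, so \cref{eq:speq} has the unique solution $\eta=S^{-1}\chi$. I do not anticipate a serious obstacle: the only delicate point is the sign bookkeeping for the Riesz term tested against $\Hc_\Gamma^\varphi$, which is precisely where the skew-adjointness of $\Hc_\Gamma$ on $L^2(\Gamma\times\R)$ is used to make the extra summand $sJ$ harmless.
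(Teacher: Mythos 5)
Your proposal is correct and follows essentially the same route as the paper: test the form against $\Hc_\Gamma^\varphi\mu$, invoke the coercive-equivalence from \cref{lemma:sp}, and conclude by Lax--Milgram. The only difference is that the paper treats $s=0$ first and merely asserts that \cref{eq:rieszbound,eq:rieszcoer} let one generalize to $s\geq 0$, whereas you handle general $s$ directly by observing that $\Hc_\Gamma$ is skew-adjoint on $L^2(\Gamma\times\R)$, so that $\langle J\eta,\Hc_\Gamma^\varphi\eta\rangle=\cos(\varphi)\|\eta\|_{L^2(\Gamma\times\R)}^2\geq 0$ --- a detail the paper leaves implicit and which your argument supplies correctly.
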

\begin{proof}
For simplicity we assume that $s=0$. Then $S_i$ is bijective if and only if for any $z\in Z^*$ there exists a unique solution $\eta\in Z$ to
\begin{displaymath}
\langle S_i\eta, \mu\rangle=\langle z, \mu\rangle\quad \textrm{ for all } \mu\in Z.
\end{displaymath}
Since $\Hc_\Gamma^\varphi: Z\rightarrow Z$ is an isomorphism this is equivalent to
\begin{equation}\label{eq:Sibij}
\langle S_i\eta, \Hc_\Gamma^\varphi\mu\rangle=\langle z, \Hc_\Gamma^\varphi\mu\rangle\quad \textrm{ for all } \mu\in Z.
\end{equation}
Note that $\mu\mapsto \langle z, \Hc_\Gamma^\varphi\mu\rangle$ is also an element in $Z^*$. Moreover, by \cref{lemma:sp} the bilinear form $(\eta, \mu)\mapsto\langle S_i\eta, \Hc_\Gamma^\varphi\mu\rangle$ is bounded and coercive in $Z$. Therefore, by the Lax--Milgram lemma~\cite[Chapter 6, Theorem 6]{lax} there exists a unique solution $\eta$ to~\cref{eq:Sibij}. It follows from~\cref{lemma:sp} that $S_i$ and $S_i^{-1}$ are bounded and therefore $S_i$ is an isomorphism. Since we have \cref{eq:rieszbound,eq:rieszcoer} the proof can be generalized to any $s\geq 0$. The statement for $S$ is proven similarly to $S_i$.
\qed
\end{proof}

\begin{lemma}\label{lemma:tpspeq}
Suppose that \cref{ass:domains,ass:f} hold. The weak transmission problem and the weak Steklov--Poincaré equation are equivalent in the following way: If $(u_1, u_2)$ solves \cref{eq:weaktran} then $\eta=T_iu_i$ solves \cref{eq:speq}. Conversely, if $\eta$ solves \cref{eq:speq} then $(u_1, u_2)=(F_1\eta+G_1f_1, F_2\eta+G_2f_2)$ solves \cref{eq:weaktran}.
\end{lemma}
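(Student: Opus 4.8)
The plan is to verify the equivalence by direct computation, the linchpin being the algebraic decomposition $u_i = F_i\eta + G_if_i$ of the subdomain solutions in terms of the common interface trace $\eta$, the extension operator $F_i$ from \cref{eq:Fi}, and the solution operator $G_i$ from \cref{eq:Gi}.

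For the forward implication, assume $(u_1,u_2)$ solves \cref{eq:weaktran}. The second equation of \cref{eq:weaktran} lets me define $\eta := T_1u_1 = T_2u_2$, and $\eta\in Z$ by \cref{lemma:tracei}. Set $w_i := u_i - F_i\eta - G_if_i \in W_i$. Then $T_iw_i = \eta - \eta - 0 = 0$, using the left-inverse property $T_iF_i\eta = \eta$ and $G_if_i\in W_i^0$; combining this with $w_i\in W_i$ and the identities \cref{eq:Wi,eq:Wi0} (exactly as in the remark following the definition of the Steklov--Poincaré operators) gives $w_i\in W_i^0$. The first equation of \cref{eq:weaktran} together with \cref{eq:Fi,eq:Gi} then shows that $a_i(w_i,v_i) = 0$ for all $v_i\in W_i^0$; testing with $v_i = w_i$ and invoking the coercivity of $a_i$ from \cref{lemma:ai} forces $w_i = 0$, i.e. $u_i = F_i\eta + G_if_i$. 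Substituting this into the third equation of \cref{eq:weaktran} and expanding $a_i(u_i,F_i\mu) = a_i(F_i\eta,F_i\mu) + a_i(G_if_i,F_i\mu) = \langle S_i\eta,\mu\rangle + a_i(G_if_i,F_i\mu)$ yields, after rearranging against the definition of $\chi_i$, precisely \cref{eq:speq}.

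For the converse, assume $\eta\in Z$ solves \cref{eq:speq} and set $u_i := F_i\eta + G_if_i$, which lies in $W_i$ by \cref{lemma:Fi,lemma:ai}. The first equation of \cref{eq:weaktran} is immediate from \cref{eq:Fi,eq:Gi}; the matching condition $T_1u_1 = T_2u_2$ follows since $T_iu_i = T_iF_i\eta + T_iG_if_i = \eta + 0 = \eta$ for $i=1,2$; and the third equation is obtained by reversing the computation above, i.e. by expanding $\sum_{i=1}^2\big(a_i(u_i,F_i\mu) - (f_i,F_i\mu)_{L^2(\Omega_i\times\R)}\big) = \sum_{i=1}^2\big(\langle S_i\eta,\mu\rangle - \langle\chi_i,\mu\rangle\big)$ and using \cref{eq:speq}. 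Uniqueness of the solution to \cref{eq:weaktran} is then inherited from the uniqueness in \cref{thm:speq} (or, alternatively, from \cref{lemma:tranequiv}).

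I do not expect a genuine obstacle here; the computation is routine once the decomposition $u_i = F_i\eta + G_if_i$ is established. The only point that needs a little care — and it is already handled in the excerpt — is the identification $W_i^0 = \{v\in W_i : T_iv = 0\}$, which is exactly what places $w_i$ in $W_i^0$ in the forward direction.
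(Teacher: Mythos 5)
Your proposal is correct and follows the same route the paper intends: the paper dismisses the proof as ``immediate after writing out the definitions of the Steklov--Poincar\'e operators,'' and your argument is exactly that computation, with the one genuinely needed supporting step (the identification $u_i=F_i\eta+G_if_i$ in the forward direction via $T_iw_i=0$, the characterization of $W_i^0$, and coercivity of $a_i$) supplied correctly. No gaps.
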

The proof of \cref{lemma:tpspeq} is immediate after writing out the definitions of the Steklov--Poincaré operators $S_i$, $i=1,2$. 

We will now introduce the iterations on the interface $\Gamma\times\R$ and show equivalence to the corresponding domain decomposition methods. The first one is the weak Peaceman--Rachford iteration, which is given by finding $(\eta^n_1,\eta^n_2)\in Z\times Z$, for $n=1,2,\ldots,$ such that
\begin{equation}\label{eq:pr}
\left\{\begin{aligned}
		 \langle(sJ+S_1)\eta_1^{n+1}-\chi_1, \mu\rangle&=\langle(sJ-S_2)\eta_2^n+\chi_2, \mu\rangle\quad&\text{for all }\mu\in Z,\\
                  \langle(sJ+S_2)\eta_2^{n+1}-\chi_2, \mu\rangle&=\langle(sJ-S_1)\eta_1^{n+1}+\chi_1, \mu\rangle\quad&\text{for all }\mu\in Z.
\end{aligned}\right.
\end{equation}
Here, $\eta_2^0\in Z$ is an initial guess. 
\begin{lemma}\label{lemma:rrpr}
Suppose that \cref{ass:domains,ass:f} hold. Then the weak forms of the Robin--Robin method and the Peaceman--Rachford iteration are equivalent in the following way: If $(u_1^n, u_2^n)_{n\geq 1}$ solves \cref{eq:robin} then $(\eta_1^n, \eta_2^n)_{n\geq 1}$, defined as $\eta_i^n=T_iu^n_i$, solves \cref{eq:pr} with $\eta_2^0=T_2u^0_2$. Conversely, if $(\eta_1^n, \eta_2^n)_{n\geq 1}$ solves \cref{eq:pr} then $(u_1^n, u_2^n)_{n\geq 1}$, defined as $u_i^n=F_i\eta_i^n+G_if_i$, solves \cref{eq:robin} with $u_2^0=F_2\eta_2^0+G_2f_2$.
\end{lemma}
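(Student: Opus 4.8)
The plan is to establish the equivalence by a direct substitution argument, exploiting the already-established correspondences $u_i = F_i\eta_i + G_if_i \leftrightarrow \eta_i = T_iu_i$ together with the definitions of $S_i$, $\chi_i$, and the Riesz identity \cref{eq:riesz}. First I would fix the translation between the two formulations. Given $(\eta_1^n,\eta_2^n)$ in $Z\times Z$, set $u_i^n = F_i\eta_i^n + G_if_i$; then $T_iu_i^n = T_iF_i\eta_i^n + T_iG_if_i = \eta_i^n$, using $T_iF_i = I$ from \cref{eq:Fi} and $T_iG_if_i = 0$ since $G_if_i \in W_i^0$. Conversely, given $(u_1^n,u_2^n)$ solving \cref{eq:robin}, the first line of \cref{eq:robin} says $u_i^{n+1} - G_if_i$ solves the homogeneous problem on $W_i^0$ with trace $T_iu_i^{n+1}$, so by uniqueness (\cref{lemma:ai}) $u_i^{n+1} = F_i(T_iu_i^{n+1}) + G_if_i$; setting $\eta_i^{n+1} = T_iu_i^{n+1}$ recovers the same relation. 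Thus the two parametrizations are genuinely inverse to each other, and it remains to check that the remaining equations match up under this dictionary.

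Next I would verify that, under the substitution $u_i = F_i\eta_i + G_if_i$, the second (interface) equation in each half of \cref{eq:robin} is exactly the corresponding line of \cref{eq:pr}. Testing against $\mu \in Z$ and writing $v_i = F_i\mu$, I expand $a_i(u_i, F_i\mu) = a_i(F_i\eta_i, F_i\mu) + a_i(G_if_i, F_i\mu)$. By the definition of $S_i$, the first term is $\langle S_i\eta_i, \mu\rangle$; by the definition of $\chi_i$, the combination $a_i(G_if_i, F_i\mu) - (f_i, F_i\mu)_{L^2(\Omega_i\times\R)}$ equals $-\langle\chi_i,\mu\rangle$. Hence each expression $a_i(u_i, F_i\mu) - (f_i, F_i\mu)_{L^2(\Omega_i\times\R)}$ becomes $\langle S_i\eta_i - \chi_i, \mu\rangle$. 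On the right-hand side, a term like $s(T_2u_2^n - T_1u_1^{n+1}, \mu)_{L^2(\Gamma\times\R)}$ becomes $s(\eta_2^n - \eta_1^{n+1}, \mu)_{L^2(\Gamma\times\R)} = \langle sJ\eta_2^n - sJ\eta_1^{n+1}, \mu\rangle$ by \cref{eq:riesz}. Collecting terms in the first interface equation of \cref{eq:robin} gives $\langle S_1\eta_1^{n+1} - \chi_1 + S_2\eta_2^n - \chi_2, \mu\rangle = \langle sJ\eta_2^n - sJ\eta_1^{n+1}, \mu\rangle$, i.e., $\langle (sJ+S_1)\eta_1^{n+1} - \chi_1, \mu\rangle = \langle (sJ-S_2)\eta_2^n + \chi_2, \mu\rangle$, which is precisely the first line of \cref{eq:pr}; the second interface equation translates to the second line of \cref{eq:pr} in the same manner. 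Since the argument is a chain of equivalences (each $\mu\in Z$ corresponds to $F_i\mu$, and conversely the $F_i$ extensions are enough to recover the interface equation for all test functions in $W_i$ via the decomposition $W_i = W_i^0 \oplus F_iZ$), both directions follow simultaneously. The base case $n=0$ is just the definition of the initial guesses: $\eta_2^0 = T_2u_2^0$ in one direction and $u_2^0 = F_2\eta_2^0 + G_2f_2$ in the other.

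I do not expect a genuine obstacle here; the proof is essentially bookkeeping, and indeed the paper signals this by stating afterward that the proof ``follows by the same argument'' as for the elliptic case. The one point requiring a little care is the claim that the first (bulk) equation of each half of \cref{eq:robin}, together with the second (interface) equation tested only against the extensions $F_i\mu$, is equivalent to the full variational characterization $u_i = F_i\eta_i + G_if_i$ with $\eta_i = T_iu_i$ — that is, that testing against $W_i^0$ and against $F_iZ$ separately is the same as testing against all of $W_i$. This is exactly the orthogonal-type decomposition $W_i = W_i^0 \oplus F_iZ$ (every $w\in W_i$ writes as $(w - F_iT_iw) + F_iT_iw$ with the first summand in $W_i^0$ by \cref{eq:Wi0}), which is the same mechanism used in \cref{lemma:tpspeq}; I would simply invoke it. Everything else — boundedness, the Riesz identity, $T_iF_i = I$, $G_if_i\in W_i^0$ — has been established in the preceding sections.
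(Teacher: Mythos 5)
Your proposal is correct and follows essentially the same route as the paper's proof: identify $u_i^n=F_i\eta_i^n+G_if_i$ from the subdomain equations (the paper states this identification directly; you justify it via uniqueness in $W_i^0$, which is the right argument), then rewrite the interface equations using the definitions of $S_i$, $\chi_i$ and the Riesz identity \cref{eq:riesz} to obtain exactly the two lines of \cref{eq:pr}. The only superfluous point is your concern about recovering the interface equation for all test functions in $W_i$; since \cref{eq:robin} is formulated only with test functions in $W_i^0$ and extensions $F_i\mu$, nothing beyond the chain of identities you wrote is needed.
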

\begin{proof}
Let $(u_1^n, u_2^n)_{n\geq 1}\subset W_1\times W_2$  be a weak Robin--Robin approximation solving~\cref{eq:robin} and define $\eta_i^n=T_iu^n_i\in Z$. This together with the first and third equation in~\cref{eq:robin} yields the identification $u_i^n=F_i\eta_i^n+G_if_i$.
Entering these observations into the second and fourth equation in~\cref{eq:robin} gives the equalities 
\begin{equation}\label{eq:inter}
   \begin{aligned}
            &s(\eta_1^{n+1}, \mu)_{L^2(\Gamma\times\R)}+a_1(F_1\eta_1^{n+1}+G_1f_1, F_1\mu)-( f_1, F_1\mu)_{L^2(\Omega_1\times\R)}\\
            &=s(\eta_2^n, \mu)_{L^2(\Gamma\times\R)}-a_2(F_2\eta_2^n+G_2f_2, F_2\mu)+( f_2, F_2\mu)_{L^2(\Omega_2\times\R)},\quad\text{and}\\[3pt]
            &s(\eta_2^{n+1}, \mu)_{L^2(\Gamma\times\R)}+a_2(F_2\eta_2^{n+1}+G_2f_2, F_2\mu)-( f_2, F_2\mu)_{L^2(\Omega_2\times\R)}\\
            &=s(\eta_1^{n+1}, \mu)_{L^2(\Gamma\times\R)}-a_1(F_1\eta_1^{n+1}+G_1f_1, F_1\mu)+( f_1, F_1\mu)_{L^2(\Omega_1\times\R)}
    \end{aligned}
\end{equation}
for all $\mu\in Z$. By the definitions of $S_i$ and $\chi_i$, the equalities~\cref{eq:inter} are equivalent to~\cref{eq:pr}, i.e., $(\eta_1^n, \eta_2^n)_{n\geq 1}$ solves~\cref{eq:pr} with $\eta_2^0=T_2u^0_2$. 

Conversely, let $(\eta_1^n, \eta_2^n)_{n\geq 1}\subset Z\times Z$ be a weak Peaceman--Rachford approximation solving~\cref{eq:pr} and define $u_i^{n}=F_i\eta_i^{n}+G_if_i$. This definition ensures that the first and third equation in~\cref{eq:robin} are fulfilled, as well as $T_iu_i^{n}=\eta_i^{n}$. As already observed, one Peaceman--Rachford iterate~\cref{eq:pr} is equivalent to~\cref{eq:inter}. Inserting the definition of $u_i^n$ into~\cref{eq:inter} then yields that the second and fourth equation in~\cref{eq:robin}, i.e.,  $(u_1^n, u_2^n)_{n\geq 1}$ solves \cref{eq:robin} with $u_2^0=F_2\eta_2^0+G_2f_2$.
\qed
\end{proof}
\begin{corollary}\label{cor:rrpr}
Suppose that \cref{ass:domains,ass:f} hold. Then the weak forms of the Robin--Robin method and the Peaceman--Rachford iteration are well defined, that is, \cref{eq:robin,eq:pr} have unique solutions at each iteration.
\end{corollary}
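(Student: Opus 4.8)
The plan is to exploit the equivalence established in \cref{lemma:rrpr} to reduce the claim to the well-definedness of the Peaceman--Rachford iteration \cref{eq:pr}, and then to read off the latter directly from the isomorphism property in \cref{thm:speq}.

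First I would treat \cref{eq:pr} by induction on $n$. Assume $\eta_2^n\in Z$ has already been constructed; for $n=0$ this is the prescribed initial guess. The first line of \cref{eq:pr} requires $\eta_1^{n+1}\in Z$ with
\begin{equation*}
(sJ+S_1)\eta_1^{n+1}=\chi_1+\chi_2+(sJ-S_2)\eta_2^n\quad\text{in }Z^*.
\end{equation*}
The right-hand side is a genuine element of $Z^*$: by definition $\chi_1,\chi_2\in Z^*$; the operator $S_2\colon Z\to Z^*$ is bounded by \cref{lemma:sp}; and through the Gelfand triple $Z\hookrightarrow L^2(\Gamma\times\R)\cong L^2(\Gamma\times\R)^*\hookrightarrow Z^*$ the Riesz map $J$ sends $\eta_2^n\in Z$ into $Z^*$. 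Since $s\geq 0$, \cref{thm:speq} guarantees that $sJ+S_1\colon Z\to Z^*$ is an isomorphism, so $\eta_1^{n+1}$ exists and is unique. The second line of \cref{eq:pr} then determines $\eta_2^{n+1}\in Z$ from $\eta_1^{n+1}$ in exactly the same way, now using that $sJ+S_2\colon Z\to Z^*$ is an isomorphism. This closes the induction, so \cref{eq:pr} has a unique solution at every iteration.

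It remains to transfer this to \cref{eq:robin}. By \cref{lemma:rrpr}, the sequences solving \cref{eq:robin} and \cref{eq:pr} are in one-to-one correspondence via $\eta_i^n=T_iu_i^n$ and $u_i^n=F_i\eta_i^n+G_if_i$, with initial data matched by $\eta_2^0=T_2u_2^0$. Because the first and third equations in \cref{eq:robin} force $u_i^{n+1}=F_i(T_iu_i^{n+1})+G_if_i$, an iterate $u_i^{n+1}$ is completely determined by its trace $\eta_i^{n+1}$; combined with the uniqueness just obtained for \cref{eq:pr}, this yields existence and uniqueness of $(u_1^{n+1},u_2^{n+1})$ at each step, which is the assertion of the corollary.

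I do not expect a genuine obstacle here, since everything follows from \cref{thm:speq} and \cref{lemma:rrpr}. The only points meriting a line of care are (i) verifying that each right-hand side occurring in \cref{eq:pr} is indeed a bounded functional on $Z$, so that \cref{thm:speq} is applicable --- which is immediate from the boundedness of $J$ and $S_i$ together with $\chi_i\in Z^*$ --- and (ii) checking that the reconstruction $u_i^n=F_i\eta_i^n+G_if_i$ is simultaneously consistent with all four lines of \cref{eq:robin}, which is precisely the content of \cref{lemma:rrpr}.
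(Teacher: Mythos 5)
Your proposal is correct and follows exactly the paper's route: the paper's own proof of this corollary is the one-line observation that \cref{thm:speq} gives unique solvability of each step of \cref{eq:pr} and \cref{lemma:rrpr} transfers this to \cref{eq:robin}. Your additional care in checking that each right-hand side lies in $Z^*$ and that the reconstruction $u_i^n=F_i\eta_i^n+G_if_i$ is consistent simply makes explicit what the paper leaves implicit.
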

\begin{proof}
It follows from \cref{thm:speq} that \cref{eq:pr} has a unique solution at each iteration and, by \cref{lemma:rrpr}, so does \cref{eq:robin}. \qed
\end{proof}
The theory above applies to the weak forms of the Dirichlet--Neumann and Neumann--Neumann methods as well. To this end, we introduce two preconditioned fixed point iterations on the interface $\Gamma\times \R$. The first one is given by finding $\eta^n\in Z$, for $n=1,2,\ldots,$ such that
\begin{equation}\label{eq:pfp1}
 \langle S_2\eta^{n+1}, \mu\rangle=\langle S_2\eta^n, \mu\rangle+s_0\langle -S\eta^n+\chi, \mu\rangle \quad \text{ for all } \mu\in Z.
\end{equation}
Here, $\eta^0$ is an initial guess and $s_0\in (0, 1)$ is a method parameter. The second one is given by finding $(\lambda_1^n, \lambda_2^n, \eta^n)\in Z\times Z\times Z$, for $n=1,2,\ldots,$ such that
\begin{equation}\label{eq:pfp2}
\left\{\begin{aligned} 
		\langle S_i\lambda_i^{n+1}, \mu\rangle&=\langle -S\eta^n+\chi, \mu\rangle &&\text{ for all } \mu\in Z,\, i=1, 2,\\
		\eta^{n+1}&=\eta^n+s_1\lambda_1^{n+1}+s_2\lambda_2^{n+1}.&&
\end{aligned}\right.
\end{equation}
Again, $\eta^0$ is an initial guess and $s_1, s_2>0$ are method parameters. 
\begin{lemma}\label{lemma:dnnn}
Suppose that \cref{ass:domains,ass:f} hold. The weak Dirichlet--Neumann method is equivalent to the interface iteration~\cref{eq:pfp1} and the weak Neumann--Neumann method is equivalent to the interface iteration~\cref{eq:pfp2}.
\end{lemma}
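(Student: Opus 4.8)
The plan is to mirror the proof of \cref{lemma:rrpr}. As in the elliptic case~\cite[Chapter~1.3]{quarteroni}, both weak methods are assembled from three building blocks: a \emph{Dirichlet solve} on $\Omega_i\times\R$ with interface datum $\eta\in Z$ and source $f_i$, whose solution is $u_i=F_i\eta+G_if_i$ by~\cref{eq:Fi,eq:Gi}; a \emph{Neumann solve} on $\Omega_i\times\R$ with conormal datum $\psi\in Z^*$ and source $f_i$, i.e.\ the problem of finding $w_i\in W_i$ with $a_i(w_i,v_i)=(f_i,v_i)_{L^2(\Omega_i\times\R)}+\langle\psi,T_iv_i\rangle$ for all $v_i\in W_i$; and a linear update of the interface datum. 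So the first step is to write out the two weak methods explicitly in these terms. The Neumann solve is well posed, since $a_i$ is coercive-equivalent in $W_i$ by \cref{lemma:ai} and $v_i\mapsto\langle\psi,T_iv_i\rangle$ is bounded on $W_i$ by \cref{lemma:tracei}. Two identities then carry the proof. First, the weak conormal derivative of a Dirichlet iterate, i.e.\ the functional $\mu\mapsto a_i(F_i\eta+G_if_i,F_i\mu)-(f_i,F_i\mu)_{L^2(\Omega_i\times\R)}$, equals $S_i\eta-\chi_i$ by the definitions of $S_i$ and $\chi_i$. Second, testing the Neumann solve with $v_i\in W_i^0$ and invoking~\cref{eq:Wi0,eq:Fi,eq:Gi} shows $w_i=F_i(T_iw_i)+G_if_i$, and testing with $v_i=F_i\mu$ then yields $S_i(T_iw_i)=\psi+\chi_i$ in $Z^*$.

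Given these identities the verification is a short computation. One step of the weak Dirichlet--Neumann method consists of a Dirichlet solve on $\Omega_1$ producing $u_1^{n+1}=F_1\eta^n+G_1f_1$ with conormal derivative $S_1\eta^n-\chi_1$; a Neumann solve on $\Omega_2$ with conormal datum $-(S_1\eta^n-\chi_1)$, which by the second identity gives $S_2(T_2u_2^{n+1})=-S_1\eta^n+\chi$; and the relaxation $\eta^{n+1}=(1-s_0)\eta^n+s_0T_2u_2^{n+1}$. Combining these gives $\langle S_2\eta^{n+1},\mu\rangle=\langle S_2\eta^n,\mu\rangle+s_0\langle-S\eta^n+\chi,\mu\rangle$, which is exactly~\cref{eq:pfp1}. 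One step of the weak Neumann--Neumann method consists of Dirichlet solves on both $\Omega_i$ producing $u_i^{n+1}=F_i\eta^n+G_if_i$, whose conormal derivatives sum to the residual $S\eta^n-\chi$; source-free Neumann corrections $\psi_i^{n+1}\in W_i$ — hence of the form $F_i\lambda_i^{n+1}$ — carrying conormal datum $-(S\eta^n-\chi)$, which forces $S_i\lambda_i^{n+1}=-S\eta^n+\chi$; and the update $\eta^{n+1}=\eta^n+s_1\lambda_1^{n+1}+s_2\lambda_2^{n+1}$. This reproduces~\cref{eq:pfp2}. In both cases $\eta^n$ is the interface iterate, the subdomain iterates are recovered through $F_i$, $G_i$ and the $\lambda_i^{n+1}$, and the starting values correspond.

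The converse implications follow by reading the same chains of equalities backwards, exactly as in the second half of the proof of \cref{lemma:rrpr}: given a solution of~\cref{eq:pfp1} (respectively~\cref{eq:pfp2}) one \emph{defines} the subdomain iterates by $u_i^{n+1}=F_i\eta^n+G_if_i$ and, for Neumann--Neumann, the corrections by $\psi_i^{n+1}=F_i\lambda_i^{n+1}$, and the identity $S_i(T_iw_i)=\psi+\chi_i$ then certifies that these iterates solve the respective weak method.

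The main obstacle is not the algebra but the correct formulation of the Neumann subproblems in the space-time $H^{1/2}$-framework: they must be posed on the full spaces $W_i$ rather than $W_i^0$, their conormal data are elements of $Z^*$ extracted from the other subdomain's weak conormal derivative, and one must carefully track the signs stemming from $\nu_1=-\nu_2$ on $\Gamma$ so that the functionals delivered by the Neumann solves match $S_i$ and $\chi_i$ with the right signs. Once both weak methods are written in this operator form, the equivalence is immediate.
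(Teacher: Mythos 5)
Your proposal is correct and follows exactly the route the paper intends: the paper gives no written proof of this lemma, stating only that it ``follows in the same fashion as for'' the Robin--Robin equivalence, and your argument is precisely that analogy carried out in detail --- positing the weak Dirichlet and Neumann subproblems in the $H^{1/2}$-framework, establishing well-posedness of the Neumann solve via the coercive-equivalence of $a_i$ on $W_i$, and using the identities $u_i=F_i(T_iu_i)+G_if_i$ and $S_i(T_iw_i)=\psi+\chi_i$ to translate each method into its interface iteration and back. The sign bookkeeping and the relaxation/update steps you give reproduce~\cref{eq:pfp1,eq:pfp2} exactly, so nothing is missing.
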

Here, the definition of the methods being equivalent and the proofs thereof follow in the same fashion as for~\cref{lemma:rrpr}. By \cref{thm:speq,lemma:dnnn}, we now trivially have the following existence and uniqueness results.
\begin{corollary}
Suppose that \cref{ass:domains,ass:f} hold. The weak forms of the Dirichlet--Neumann and Neumann--Neumann methods, and their corresponding interface iterations \cref{eq:pfp1} and \cref{eq:pfp2}, respectively, are all well defined.
\end{corollary}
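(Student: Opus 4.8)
The plan is to reduce the statement to the isomorphism property of \cref{thm:speq}, mirroring the proof of \cref{cor:rrpr}. First I would record the ingredients: taking $s=0$ in \cref{thm:speq} shows that $S_1,S_2:Z\to Z^*$ and $S:Z\to Z^*$ are all isomorphisms; moreover $\chi=\chi_1+\chi_2$ is a genuine element of $Z^*$ thanks to the boundedness of $F_i$ in \cref{lemma:Fi} and \cref{ass:f}, and $G_if_i\in W_i^0$ is well defined by \cref{lemma:ai}. These are the only analytic facts needed.

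Next I would verify that the two interface iterations are well posed at each step, meaning that each iteration admits a unique solution. For \cref{eq:pfp1}, given $\eta^n\in Z$ the functional $\mu\mapsto\langle S_2\eta^n,\mu\rangle+s_0\langle-S\eta^n+\chi,\mu\rangle$ lies in $Z^*$, so the iteration determines $\eta^{n+1}=S_2^{-1}\bigl(S_2\eta^n+s_0(-S\eta^n+\chi)\bigr)$ uniquely. For \cref{eq:pfp2}, given $\eta^n\in Z$ one has $-S\eta^n+\chi\in Z^*$, hence $\lambda_i^{n+1}=S_i^{-1}(-S\eta^n+\chi)$ is uniquely determined for $i=1,2$, after which $\eta^{n+1}=\eta^n+s_1\lambda_1^{n+1}+s_2\lambda_2^{n+1}$ is given explicitly. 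Thus \cref{eq:pfp1,eq:pfp2} each have a unique solution at every iteration.

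Finally I would invoke \cref{lemma:dnnn}: the weak Dirichlet--Neumann method is equivalent to \cref{eq:pfp1} and the weak Neumann--Neumann method is equivalent to \cref{eq:pfp2}, with the correspondence between iterates given — exactly as in the proof of \cref{lemma:rrpr} — through $\eta^n=T_iu_i^n$ and $u_i^n=F_i\eta_i^n+G_if_i$ via \cref{eq:Fi,eq:Gi}. Since this equivalence is a bijection between the iterates of the two formulations, the unique solvability established for the interface iterations transfers verbatim to the domain decomposition methods themselves, which proves the corollary.

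I do not anticipate a real obstacle here: all the substance is already contained in \cref{thm:speq} and in the equivalence statement \cref{lemma:dnnn}. The only point requiring a little care is bookkeeping — confirming that the right-hand sides of \cref{eq:pfp1,eq:pfp2} are legitimate elements of $Z^*$ and that the equivalence of \cref{lemma:dnnn} respects well-definedness iteration by iteration, precisely as in \cref{cor:rrpr}.
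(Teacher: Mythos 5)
Your argument is correct and is precisely the route the paper takes: the paper states the corollary follows ``trivially'' from \cref{thm:speq} and \cref{lemma:dnnn}, and your proposal simply fills in the (straightforward) details of why the isomorphism property of $S_i$ and $S$ makes each step of \cref{eq:pfp1,eq:pfp2} uniquely solvable before transferring this through the equivalence. No gaps; the bookkeeping you flag (right-hand sides lying in $Z^*$, the correspondence $u_i^n=F_i\eta^n+G_if_i$) is exactly what the paper leaves implicit.
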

The standard convergence theories of the Dirichlet--Neumann and Neumann--Neumann methods~\cite[Chapter 4]{quarteroni} do not apply for the heat equation since the time-dependent Steklov--Poincaré operators are not symmetric. In the following sections we instead prove convergence results for Robin--Robin-type methods.
    
\section{Convergence of the Robin--Robin method}\label{sec:conv}
The variational framework appears to be too general for analyzing the convergence of the Peaceman--Rachford iteration. Therefore, we introduce the Steklov--Poincaré operators as affine unbounded operators on $L^2(\Gamma\times\R)$ before we prove that the iteration converges. To this end, let 
\begin{align*}
        D(\mathcal{S}_i)&=\{\eta\in Z: S_i\eta-\chi_i\in L^2(\Gamma\times\R)^*\},\\
        D(\mathcal{S})&=\{\eta\in Z: S\eta-\chi\in L^2(\Gamma\times\R)^*\}
\end{align*}
and define the unbounded affine operators
\begin{align*}
	\mathcal{S}_i&:D(\mathcal{S}_i)\subseteq L^2(\Gamma\times\R)\rightarrow L^2(\Gamma\times\R): \eta\mapsto J^{-1}(S_i\eta-\chi_i),\\
	\mathcal{S}&:D(\mathcal{S})\subseteq L^2(\Gamma\times\R)\rightarrow L^2(\Gamma\times\R): \eta\mapsto J^{-1}(S\eta-\chi).
\end{align*}
In $L^2(\Gamma\times\R)$ the Steklov--Poincaré equation is to find $\eta\in D(\mathcal{S})$ such that
\begin{equation}\label{eq:spl2}
	  \mathcal{S}\eta=0,
\end{equation}
and the Peaceman--Rachford iteration takes the following form: For each $n=1, 2,\dots$ find $(\eta_1^n, \eta_2^n)\in D(\mathcal{S}_1)\times D(\mathcal{S}_2)$ such that
\begin{equation}\label{eq:prl2}
  \left\{\begin{aligned} (sI+\mathcal{S}_1)\eta_1^{n+1}&=(sI-\mathcal{S}_2)\eta_2^n,\\
        (sI+\mathcal{S}_2)\eta_2^{n+1}&=(sI-\mathcal{S}_1)\eta_1^{n+1}.
\end{aligned}\right.
\end{equation}
Here, $\eta_2^0\in D(\mathcal{S}_2)$ is a given initial guess. We now verify that~\cref{eq:spl2} is indeed a restriction of the weak Steklov--Poincaré equation~\cref{eq:speq}.
\begin{lemma}\label{lemma:spvl2}
Suppose that \cref{ass:domains,ass:f} hold. If $\eta\in D(\mathcal{S})$ solves the $L^2$-Steklov--Poincaré equation~\cref{eq:spl2}, then $\eta$ also solves the weak Steklov--Poincaré equation~\cref{eq:speq}.
\end{lemma}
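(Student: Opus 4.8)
The plan is to unwind the definitions and use the Riesz identity \cref{eq:riesz}. Suppose $\eta\in D(\mathcal S)$ solves \cref{eq:spl2}, i.e.\ $\mathcal S\eta=J^{-1}(S\eta-\chi)=0$ in $L^2(\Gamma\times\R)$. Applying $J$ to both sides gives $S\eta-\chi=0$ as an element of $L^2(\Gamma\times\R)^*\subseteq Z^*$; here we use that $\eta\in D(\mathcal S)$ precisely guarantees $S\eta-\chi\in L^2(\Gamma\times\R)^*$, so the identity $J\mathcal S\eta=S\eta-\chi$ makes sense. Pairing with an arbitrary $\mu\in Z$ and using the Gelfand triple inclusion $L^2(\Gamma\times\R)^*\hookrightarrow Z^*$ then yields $\langle S\eta,\mu\rangle=\langle\chi,\mu\rangle$ for all $\mu\in Z$, which is exactly \cref{eq:speq} after recalling $S=S_1+S_2$ and $\chi=\chi_1+\chi_2$.

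Concretely, I would first record that the composition $J\circ\mathcal S$ acts on $D(\mathcal S)$ as $\eta\mapsto S\eta-\chi$, which is immediate from the definition $\mathcal S\eta=J^{-1}(S\eta-\chi)$ together with the fact that $J:L^2(\Gamma\times\R)\to L^2(\Gamma\times\R)^*$ is an isomorphism (so $J^{-1}$ is well defined on the range, and $S\eta-\chi$ lies in that range by the membership $\eta\in D(\mathcal S)$). Second, from $\mathcal S\eta=0$ I conclude $S\eta-\chi=J(0)=0$ in $Z^*$, since the zero element maps to the zero functional. Third, testing against $\mu\in Z$ and splitting the sum gives $\sum_{i=1}^2\langle S_i\eta,\mu\rangle=\sum_{i=1}^2\langle\chi_i,\mu\rangle$, which is \cref{eq:speq}.

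There is no serious obstacle here; the statement is essentially a bookkeeping lemma asserting that the $L^2$-formulation is a restriction of the weak one. The only point requiring a sentence of care is the role of the domain condition: one must observe that $D(\mathcal S)\subseteq Z$ by definition, so $\eta$ is automatically an admissible trial element for \cref{eq:speq}, and that the embedding $L^2(\Gamma\times\R)^*\hookrightarrow Z^*$ coming from \cref{lemma:density} is what allows the equality $S\eta-\chi=0$, a priori in $L^2(\Gamma\times\R)^*$, to be read as an equality in $Z^*$ and hence tested against all of $Z$. With these remarks the proof is a two-line computation.

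\begin{proof}
By the definition of $\mathcal S$ and the fact that $J:L^2(\Gamma\times\R)\to L^2(\Gamma\times\R)^*$ is an isomorphism, we have for every $\eta\in D(\mathcal S)$ the identity $J\mathcal S\eta=S\eta-\chi$ in $L^2(\Gamma\times\R)^*$. If $\eta\in D(\mathcal S)$ solves \cref{eq:spl2}, then $\mathcal S\eta=0$, so $S\eta-\chi=0$ in $L^2(\Gamma\times\R)^*$. By \cref{lemma:density} we have the Gelfand triple $Z\hookrightarrow L^2(\Gamma\times\R)\cong L^2(\Gamma\times\R)^*\hookrightarrow Z^*$, and thus $S\eta-\chi=0$ in $Z^*$. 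Since $\eta\in D(\mathcal S)\subseteq Z$, we may pair this identity with an arbitrary $\mu\in Z$ to obtain $\langle S\eta,\mu\rangle=\langle\chi,\mu\rangle$. Writing $S=S_1+S_2$ and $\chi=\chi_1+\chi_2$ yields
\begin{displaymath}
\sum_{i=1}^2\langle S_i\eta,\mu\rangle=\sum_{i=1}^2\langle\chi_i,\mu\rangle\quad\text{for all }\mu\in Z,
\end{displaymath}
which is precisely \cref{eq:speq}. Hence $\eta$ solves the weak Steklov--Poincar\'e equation.
\qed
\end{proof}
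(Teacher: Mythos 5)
Your proof is correct and follows essentially the same route as the paper: both arguments unwind the definition $\mathcal{S}\eta=J^{-1}(S\eta-\chi)$ and use the Riesz/Gelfand-triple identification to pass from vanishing in $L^2(\Gamma\times\R)^*$ to vanishing of the dual pairing against every $\mu\in Z$. The paper phrases this via the identity \cref{eq:riesz} applied to test functions $\mu\in Z$ rather than applying $J$ to both sides first, but the content is identical.
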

\begin{proof}
From~\cref{eq:spl2} we have
\begin{displaymath}
(J^{-1}(S\eta-\chi), \mu)_{L^2(\Gamma\times\R)}=0\quad\text{ for all }\mu\in L^2(\Gamma\times\R).
\end{displaymath}
Therefore, we get by~\cref{eq:riesz} that
\begin{displaymath}
\langle S\eta-\chi, \mu \rangle=(J^{-1}(S\eta-\chi), \mu)_{L^2(\Gamma\times\R)}=0
\end{displaymath}
for all $\mu\in Z$.
\qed
\end{proof}
A similar result holds for the Peaceman--Rachford iteration. The proof is left out since it is similar to the proof of~\cref{lemma:spvl2}. 
\begin{lemma}\label{lemma:prl2weak}
Suppose that \cref{ass:domains,ass:f} hold. If $(\eta_1^n, \eta_2^n)_{n\geq 1}$ solves the $L^2$-Peaceman--Rachford iteration~\cref{eq:prl2} with $\eta_2^0\in D(\mathcal{S}_2)$, then $(\eta_1^n, \eta_2^n)_{n\geq 1}$ also solves the weak Peaceman--Rachford iteration~\cref{eq:pr} with the same initial guess.
\end{lemma}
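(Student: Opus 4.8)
The plan is to follow exactly the argument used for \cref{lemma:spvl2}. The two lines of the $L^2$-Peaceman--Rachford iteration \cref{eq:prl2} are equalities between elements of $L^2(\Gamma\times\R)$, so the strategy is to test each of them against an arbitrary $\mu\in Z$ in the $L^2(\Gamma\times\R)$ inner product and then convert the $L^2$-pairings into $Z^*$--$Z$ dualities by means of the Riesz identity \cref{eq:riesz}.

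First I would unfold the definitions $\mathcal{S}_i\eta=J^{-1}(S_i\eta-\chi_i)$ in the first equation of \cref{eq:prl2}, obtaining an identity of the form
\begin{displaymath}
s\eta_1^{n+1}+J^{-1}(S_1\eta_1^{n+1}-\chi_1)=s\eta_2^n-J^{-1}(S_2\eta_2^n-\chi_2)\quad\text{in }L^2(\Gamma\times\R).
\end{displaymath}
Pairing with $\mu\in Z$ and using that $S_1\eta_1^{n+1}-\chi_1,\ S_2\eta_2^n-\chi_2\in L^2(\Gamma\times\R)^*$ (which holds since the iterates lie in $D(\mathcal{S}_1)$ and $D(\mathcal{S}_2)$, respectively), \cref{eq:riesz} rewrites each term $(J^{-1}(S_i\eta-\chi_i),\mu)_{L^2(\Gamma\times\R)}$ as $\langle S_i\eta-\chi_i,\mu\rangle$ and each term $(\eta,\mu)_{L^2(\Gamma\times\R)}$ as $\langle J\eta,\mu\rangle$. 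Rearranging the resulting scalar identity reproduces precisely the first equation of \cref{eq:pr}. Next I would repeat the same manipulation on the second line of \cref{eq:prl2}, with the indices $1$ and $2$ interchanged, to recover the second equation of \cref{eq:pr}.

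Finally, I would observe that the iterates are admissible test/trial elements for the weak formulation: since $D(\mathcal{S}_i)\subseteq Z$, the sequences satisfy $(\eta_1^n,\eta_2^n)\in Z\times Z$ for all $n\geq 1$ and $\eta_2^0\in Z$, so $(\eta_1^n,\eta_2^n)_{n\geq 1}$ indeed solves \cref{eq:pr} with the same initial guess. I do not expect a genuine obstacle here; the only point requiring care is keeping track of the placement of $J$ and $J^{-1}$ when moving between the $L^2(\Gamma\times\R)$ inner product and the duality on $Z$, which is exactly the content of \cref{eq:riesz}.
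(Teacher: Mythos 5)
Your argument is correct and is exactly the route the paper intends: the proof of \cref{lemma:prl2weak} is omitted there precisely because it mirrors that of \cref{lemma:spvl2}, namely pairing each $L^2(\Gamma\times\R)$-identity of \cref{eq:prl2} with $\mu\in Z$ and using \cref{eq:riesz} to convert $(J^{-1}(S_i\eta-\chi_i),\mu)_{L^2(\Gamma\times\R)}$ into $\langle S_i\eta-\chi_i,\mu\rangle$ and $s(\eta,\mu)_{L^2(\Gamma\times\R)}$ into $\langle sJ\eta,\mu\rangle$, which reproduces \cref{eq:pr}. No gaps.
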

\begin{lemma}
Suppose that \cref{ass:domains,ass:f} hold. Then $\mathcal{S}_i,\, i=1, 2$, satisfy the monotonicity property
\begin{equation}\label{eq:coerl2} 
(\mathcal{S}_i\eta-\mathcal{S}_i\mu, \eta-\mu)_{L^2(\Gamma\times\R)}\geq 
	c\|F_i(\eta-\mu)\|^2_{L^2(\R)\tensor H^1(\Omega_i)}\quad \text{ for all }\eta, \mu\in D(\mathcal{S}_i).
\end{equation}
Moreover, for any $s\geq0$ the operators $sI+\mathcal{S}_i: D(\mathcal{S}_i)\rightarrow L^2(\Gamma\times\R),\, i=1, 2$, are bijective. Similar results hold for $\mathcal{S}$. In particular, there exists a unique solution to \cref{eq:spl2} and the iteration \cref{eq:prl2} is well defined.
\end{lemma}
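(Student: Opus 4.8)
The three assertions will follow almost directly from \cref{lemma:sp,thm:speq} once the affine operators $\mathcal{S}_i$ are related back to the weak operators $S_i$ through the Riesz map $J$, so the plan is mostly bookkeeping within the Gelfand triple.

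\textbf{Monotonicity.} First I would use the linearity of $S_i$ to write, for $\eta,\mu\in D(\mathcal{S}_i)$,
\begin{displaymath}
\mathcal{S}_i\eta-\mathcal{S}_i\mu=J^{-1}S_i(\eta-\mu),
\end{displaymath}
noting that $S_i(\eta-\mu)=(S_i\eta-\chi_i)-(S_i\mu-\chi_i)\in L^2(\Gamma\times\R)^*$, so this is a genuine element of $L^2(\Gamma\times\R)$. Pairing with $\eta-\mu\in Z$ and using the Gelfand-triple identity \cref{eq:riesz} turns the $L^2$-inner product into the duality pairing $\langle S_i(\eta-\mu),\eta-\mu\rangle$, and the coercivity bound \cref{eq:weakcoer} of \cref{lemma:sp} applied to $\eta-\mu$ then gives \cref{eq:coerl2}. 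The same computation with the $S$-version of \cref{eq:weakcoer} yields the corresponding monotonicity inequality for $\mathcal{S}$.

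\textbf{Bijectivity.} For fixed $s\ge0$ and $g\in L^2(\Gamma\times\R)$, I would observe that an element $\eta\in D(\mathcal{S}_i)$ solves $(sI+\mathcal{S}_i)\eta=g$ if and only if $s\eta+J^{-1}(S_i\eta-\chi_i)=g$, which after applying $J$ is equivalent to the weak equation $(sJ+S_i)\eta=Jg+\chi_i$ in $Z^*$. Since $Jg\in L^2(\Gamma\times\R)^*\hookrightarrow Z^*$ and $\chi_i\in Z^*$, \cref{thm:speq} furnishes a unique $\eta\in Z$ solving this equation. The key check is that this $\eta$ automatically lies in $D(\mathcal{S}_i)$: indeed $S_i\eta-\chi_i=Jg-sJ\eta\in L^2(\Gamma\times\R)^*$ because $\eta\in Z\subset L^2(\Gamma\times\R)$. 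Then $\mathcal{S}_i\eta=J^{-1}(S_i\eta-\chi_i)=g-s\eta$, so $(sI+\mathcal{S}_i)\eta=g$, and uniqueness transfers from $Z$ to $D(\mathcal{S}_i)\subset Z$. Hence $sI+\mathcal{S}_i$ is a bijection of $D(\mathcal{S}_i)$ onto $L^2(\Gamma\times\R)$; replacing $S_i,\chi_i$ by $S,\chi$ and invoking the $S$-parts of \cref{lemma:sp,thm:speq} gives the analogous statement for $\mathcal{S}$.

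\textbf{Consequences.} Taking $s=0$ in the bijectivity of $sI+\mathcal{S}$ shows that $\mathcal{S}:D(\mathcal{S})\to L^2(\Gamma\times\R)$ is bijective, which is exactly existence and uniqueness for \cref{eq:spl2}. For the iteration \cref{eq:prl2}, if $\eta_2^n\in D(\mathcal{S}_2)$ then $(sI-\mathcal{S}_2)\eta_2^n=s\eta_2^n-\mathcal{S}_2\eta_2^n\in L^2(\Gamma\times\R)$, so bijectivity of $sI+\mathcal{S}_1$ produces a unique $\eta_1^{n+1}\in D(\mathcal{S}_1)$ solving the first line; the second line is handled identically, and the iteration is well defined by induction on $n$. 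I expect the only slightly delicate point to be this domain check in the bijectivity step — verifying that the $Z$-solution supplied by \cref{thm:speq} has the extra $L^2$-regularity needed to belong to $D(\mathcal{S}_i)$ — but nothing beyond the already established results is required.
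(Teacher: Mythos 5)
Your proposal is correct and follows essentially the same route as the paper: monotonicity via the Gelfand-triple identity \cref{eq:riesz} reducing the $L^2$-pairing to $\langle S_i(\eta-\mu),\eta-\mu\rangle$ and the coercivity bound \cref{eq:weakcoer}, and bijectivity by rewriting $(sI+\mathcal{S}_i)\eta=g$ as $(sJ+S_i)\eta=Jg+\chi_i$ in $Z^*$, invoking \cref{thm:speq}, and checking that the resulting $\eta$ lies in $D(\mathcal{S}_i)$. The domain check and the induction for the well-posedness of \cref{eq:prl2} are exactly the points the paper's proof also handles (the latter implicitly), so no gaps remain.
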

\begin{proof}
The monotonicity follows from~\cref{eq:riesz,eq:weakcoer}, since
\begin{align*}
      (\mathcal{S}_i\eta-\mathcal{S}_i\mu, \eta-\mu)_{L^2(\Gamma\times\R)}&=\langle (S_i\eta-\chi_i)-(S_i\mu-\chi_i), \eta-\mu\rangle\\
             &=\langle S_i(\eta-\mu), \eta-\mu\rangle\geq c\|F_i(\eta-\mu)\|^2_{L^2(\R)\tensor H^1(\Omega_i)}
\end{align*}
for all $\eta, \mu\in D(\mathcal{S}_i)$. Let $\mu\in L^2(\Gamma\times\R)$ be arbitrary. Then $\chi_i+J\mu\in Z^*$ and, by~\cref{thm:speq}, there exists a unique $\eta\in Z$ such that $(sJ+S_i)\eta=\chi_i+J\mu$ in $Z^*$. Rearranging yields that $S_i\eta-\chi_i=J(\mu-s\eta)\in L^2(\Gamma\times\R)^*$, i.e.,  $\eta\in D(\mathcal{S}_i)$ with
\begin{displaymath}
            (sI+\mathcal{S}_i)\eta=s\eta+J^{-1}(S_i\eta-\chi_i)=\mu.
\end{displaymath}
Thus, we have shown that $(sI+\mathcal{S}_i)$ is bijective. The proof for $\mathcal{S}$ is similar and is therefore left out.
\qed
\end{proof}

In order to proceed with the convergence analysis, we require the following regularity of the solution to the weak heat equation~\cref{eq:weak}. 
\begin{assumption}\label{ass:regularity}
The functionals  
\begin{displaymath}
\mu\mapsto a_i(\restr{u}{\Omega_i\times\R}, F_i\mu)-(f_i, F_i\mu)_{L^2(\Omega_i\times\R)},\quad{i=1,2},
\end{displaymath}
are elements in $L^2(\Gamma\times\R)^*$, where $u\in W$ is the solution to~\cref{eq:weak}.
\end{assumption}
\begin{remark}
The assumption is somewhat implicit, but can be interpreted as the solution having a normal derivative $\nabla u\cdot \nu_i$ on the space-time interface belonging to $L^2(\Gamma\times\R)$. This holds, for instance, if the solution satisfies the additional regularity 
\begin{displaymath}
u\in W\cap L^2\bigl(\R, H^2(\Omega)\bigr)\quad\text{or}\quad u\in W\cap L^2\bigl(\R,  C^1(\bar{\Omega})\bigr). 
\end{displaymath}
To see this, first observe that a Lipschitz manifold $\partial\Omega_i$ has a normal vector $\nu_i$ in $L^\infty(\partial\Omega_i)^d$. The additional regularity of~$u$ then yields that each term $\partial_j u\,(\nu_i)_j$ of the normal derivative becomes an element in $L^2(\Gamma\times\R)$. 
\end{remark}
\begin{lemma}\label{lemma:friedrich}
Suppose that \cref{ass:domains,ass:f,ass:regularity} hold. If $\eta$ is the solution to the $L^2$-Steklov--Poincaré equation~\cref{eq:spl2} then $\eta\in D(\mathcal{S}_1)\cap D(\mathcal{S}_2)$.
\end{lemma}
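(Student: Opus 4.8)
The plan is to show that the solution $\eta$ of the $L^2$-Steklov--Poincaré equation lies in the domains $D(\mathcal{S}_i)$ individually, not merely in $D(\mathcal{S})$. Since $\eta\in D(\mathcal{S})$ means $S_1\eta+S_2\eta-\chi_1-\chi_2\in L^2(\Gamma\times\R)^*$, it suffices to prove that one of the terms, say $S_1\eta-\chi_1$, belongs to $L^2(\Gamma\times\R)^*$; the other then follows by subtraction. The key observation is that $\eta$ is the interface trace of the solution $u$ to the weak heat equation: by \cref{lemma:tpspeq} we have $\eta=T_iu_i$ with $u_i=\restr{u}{\Omega_i\times\R}$, and hence $F_i\eta=u_i-G_if_i$ up to an element of $W_i^0$ (more precisely, $F_i\eta$ and $u_i-G_if_i$ have the same trace $\eta$, so they differ by an element of $W_i^0$, on which $a_i(F_i\eta,\cdot)$ vanishes by \cref{eq:Fi}).

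Concretely, first I would use the remark following \cref{eq:speq} to replace $F_i$ by the extension $R_i := u_i - G_if_i\in W_i$ when evaluating $S_i$: for any $\mu\in Z$,
\begin{displaymath}
\langle S_i\eta-\chi_i,\mu\rangle = a_i(F_i\eta, F_i\mu) - \bigl((f_i,F_i\mu)_{L^2(\Omega_i\times\R)} - a_i(G_if_i,F_i\mu)\bigr).
\end{displaymath}
Using $a_i(F_i\eta,v)=0$ and $a_i(G_if_i,v)=(f_i,v)_{L^2(\Omega_i\times\R)}$ for $v\in W_i^0$, together with $T_i(F_i\eta+G_if_i) = \eta = T_iu_i$, one gets that $F_i\eta + G_if_i$ and $u_i$ differ by an element of $W_i^0$, whence after cancellation
\begin{displaymath}
\langle S_i\eta-\chi_i,\mu\rangle = a_i(u_i, F_i\mu) - (f_i, F_i\mu)_{L^2(\Omega_i\times\R)}.
\end{displaymath}
The right-hand side is exactly the functional appearing in \cref{ass:regularity}, which is assumed to lie in $L^2(\Gamma\times\R)^*$. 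Therefore $S_i\eta-\chi_i\in L^2(\Gamma\times\R)^*$, i.e. $\eta\in D(\mathcal{S}_i)$, for each $i=1,2$.

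The main obstacle is the bookkeeping in the second step: carefully verifying that the extension $F_i\eta + G_if_i$ and the restriction $u_i$ coincide modulo $W_i^0$, and that all the bilinear-form terms involving $W_i^0$-elements cancel correctly, so that $\langle S_i\eta-\chi_i,\mu\rangle$ reduces precisely to the \cref{ass:regularity} functional evaluated at $F_i\mu$. Once that identification is in place, the conclusion is immediate. A minor point is that \cref{lemma:tpspeq} is stated for the weak Steklov--Poincaré equation \cref{eq:speq}, so one first invokes \cref{lemma:spvl2} to know that the $L^2$-solution $\eta$ indeed solves \cref{eq:speq}, and hence $(u_1,u_2)=(F_1\eta+G_1f_1,F_2\eta+G_2f_2)$ solves the transmission problem \cref{eq:weaktran}; then by \cref{lemma:tranequiv} the pasted function is the solution $u$ of \cref{eq:weak}, which is what \cref{ass:regularity} refers to.
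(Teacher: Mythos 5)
Your proposal is correct and follows essentially the same route as the paper: identify $\restr{u}{\Omega_i\times\R}=F_i\eta+G_if_i$ via \cref{lemma:spvl2,lemma:tpspeq,lemma:tranequiv} together with uniqueness of the weak solution, rewrite $\langle S_i\eta-\chi_i,\mu\rangle=a_i(\restr{u}{\Omega_i\times\R},F_i\mu)-(f_i,F_i\mu)_{L^2(\Omega_i\times\R)}$, and invoke \cref{ass:regularity}. The only caveat is that your intermediate ``differ by an element of $W_i^0$, whence after cancellation'' step is both unnecessary and not justified as stated---since $a_i$ is not symmetric, $a_i(w,F_i\mu)$ need not vanish for $w\in W_i^0$ in the first argument---but your final paragraph supplies the exact identification, which is precisely what the paper uses.
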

\begin{proof}
Let $\eta\in D(\mathcal{S})$ be the solution to~\cref{eq:spl2}. By~\cref{lemma:tpspeq,lemma:tranequiv}, we have the identification $\restr{u}{\Omega_i\times\R}=F_i\eta+G_if_i$, where $u\in W$ is the solution to the weak heat equation~\cref{eq:weak}. \cref{ass:regularity} then yields that 
 \begin{align*}
            \mu\mapsto\langle S_i\eta-\chi_i, \mu\rangle&=a_i(F_i\eta, F_i\mu)+a_i(G_if_i, F_i\mu)-(f_i, F_i\mu)_{L^2(\Omega_i\times\R)}\\
            &=a_i(\restr{u}{\Omega_i\times\R}, F_i\mu)-(f_i, F_i\mu)_{L^2(\Omega_i\times\R)}
\end{align*}
is an element in $L^2(\Gamma\times\R)^*$, i.e., $\eta\in D(\mathcal{S}_i)$ for $i=1,2$.
\qed
\end{proof}
 
\begin{lemma}\label{lemma:prlimit}
Suppose that \cref{ass:domains,ass:f,ass:regularity} hold. Let $\eta$ be the solution to the $L^2$-Steklov--Poincaré equation~\cref{eq:spl2} and $(\eta_1^n, \eta_2^n)_{n\geq 1}$ be the $L^2$-Peaceman--Rachford iterates~\cref{eq:prl2} with $\eta_2^0\in D(\mathcal{S}_2)$. Then we have the limit
\begin{equation}\label{eq:prlimit}
 (\mathcal{S}_i\eta^{n+1}_i-\mathcal{S}_i \eta, \eta^{n+1}_i -\eta)_{L^2(\Gamma\times\R)}\rightarrow 0,
\end{equation}
 as $n$ tends to infinity.
\end{lemma}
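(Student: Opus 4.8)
The plan is to run the classical Peaceman--Rachford energy/telescoping argument, adapted to the present $L^2(\Gamma\times\R)$ setting. First I would use \cref{lemma:friedrich} to place the exact solution $\eta$ in $D(\mathcal{S}_1)\cap D(\mathcal{S}_2)$, so that $\mathcal{S}\eta=\mathcal{S}_1\eta+\mathcal{S}_2\eta=0$ splits and $\eta$ satisfies the ``stationary'' forms of the iteration, $(sI+\mathcal{S}_1)\eta=(sI-\mathcal{S}_2)\eta$ and $(sI+\mathcal{S}_2)\eta=(sI-\mathcal{S}_1)\eta$. Subtracting these from \cref{eq:prl2} and using that the difference of two affine maps with common linear part $J^{-1}S_i$ is linear, I obtain for the errors $e_i^n:=\eta_i^n-\eta$ and $w_i^n:=\mathcal{S}_i\eta_i^n-\mathcal{S}_i\eta=J^{-1}S_i e_i^n$ the relations $s e_1^{n+1}+w_1^{n+1}=s e_2^n-w_2^n$ and $s e_2^{n+1}+w_2^{n+1}=s e_1^{n+1}-w_1^{n+1}$. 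All terms here are genuine elements of $L^2(\Gamma\times\R)$, since $\eta$ and all the iterates (including $\eta_2^0$ by hypothesis) lie in the relevant domains.

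The core is the polarization identity $\|a+b\|^2-\|a-b\|^2=4(a,b)$ in $L^2(\Gamma\times\R)$, together with the quantities $q_i^n:=(e_i^n,w_i^n)_{L^2(\Gamma\times\R)}=(\mathcal{S}_i\eta_i^n-\mathcal{S}_i\eta,\eta_i^n-\eta)_{L^2(\Gamma\times\R)}$, which are nonnegative by the monotonicity bound \cref{eq:coerl2}. Taking norms in the first error relation gives $\|s e_1^{n+1}+w_1^{n+1}\|^2=\|s e_2^n-w_2^n\|^2$; polarization rewrites the left side as $\|s e_1^{n+1}-w_1^{n+1}\|^2+4s q_1^{n+1}$; the second error relation identifies $\|s e_1^{n+1}-w_1^{n+1}\|^2=\|s e_2^{n+1}+w_2^{n+1}\|^2$; and a second polarization turns this into $\|s e_2^{n+1}-w_2^{n+1}\|^2+4s q_2^{n+1}$. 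Chaining these equalities yields, with $d_n:=\|s e_2^n-w_2^n\|_{L^2(\Gamma\times\R)}^2\geq0$, the telescoping identity $d_n=d_{n+1}+4s\,(q_1^{n+1}+q_2^{n+1})$.

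Since $s>0$ and $q_i^{n+1}\geq0$, the sequence $(d_n)$ is nonincreasing and bounded below, hence convergent; therefore $q_1^{n+1}+q_2^{n+1}=(d_n-d_{n+1})/(4s)\to0$, and as each summand is nonnegative we get $q_i^{n+1}\to0$ for $i=1,2$, which is precisely \cref{eq:prlimit}. I expect no real obstacle: the only delicate point --- that $\eta$ must lie in both $D(\mathcal{S}_1)$ and $D(\mathcal{S}_2)$, so that the splitting $\mathcal{S}\eta=\mathcal{S}_1\eta+\mathcal{S}_2\eta$ is legitimate and all the $L^2$-norms above are finite --- has already been secured through \cref{ass:regularity} and \cref{lemma:friedrich}; the remainder is the standard $\pm$-sign bookkeeping of the Peaceman--Rachford identity.
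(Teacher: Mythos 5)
Your proof is correct and is essentially the argument the paper relies on: the paper does not write it out but defers to the abstract Peaceman--Rachford result of Lions--Mercier (Proposition~1 there, or Lemma~8.8 of the cited companion paper), whose ingredients are exactly the ones you use --- the monotonicity bound \cref{eq:coerl2} and the fact from \cref{lemma:friedrich} that $\eta\in D(\mathcal{S}_1)\cap D(\mathcal{S}_2)$ so that $\mathcal{S}_1\eta=-\mathcal{S}_2\eta$ and the telescoping identity $d_n=d_{n+1}+4s(q_1^{n+1}+q_2^{n+1})$ holds in $L^2(\Gamma\times\R)$. Your explicit polarization/telescoping derivation is a valid self-contained substitute for that citation.
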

\cref{lemma:prlimit} is a consequence of the abstract result~\cite[Proposition 1]{lionsmercier}. The latter requires the monotonicity~\cref{eq:coerl2} and the fact that the solution to~\cref{eq:spl2} satisfies $\eta\in D(\mathcal{S}_1)\cap D(\mathcal{S}_2)$, which follows from~\cref{lemma:friedrich}. A simpler proof of~\cref{lemma:prlimit} can be found in~\cite[Lemma~8.8]{EHEE22}.

We are now in a position to prove that the Robin--Robin method converges.
\begin{theorem}\label{thm:rrconv}
Suppose that \cref{ass:domains,ass:f,ass:regularity} hold. Let $u$ be the solution to the weak heat equation~\cref{eq:weak} and $\eta$ be the solution to the $L^2$-Steklov--Poincaré equation~\cref{eq:spl2}. The iterates $(\eta_1^n, \eta_2^n)_{n\geq 1}$ of the $L^2$-Peaceman--Rachford iteration~\cref{eq:prl2} converges to $\eta$, i.e.,
\begin{equation*}\label{eq:prconv}
\|\eta^n_1-\eta\|_{L^2(\R)\tensor \Lambda}+\|\eta^n_2-\eta\|_{L^2(\R)\tensor \Lambda}\rightarrow 0,
\end{equation*}
as $n$ tends to infinity. Moreover the weak Robin--Robin approximation $(u_1^n, u_2^n)_{n\geq 1}$ converges to $(u_1, u_2)=(\restr{u}{\Omega_1\times\R}, \restr{u}{\Omega_2\times\R})$, i.e.,
\begin{equation*}\label{eq:rrconv}
\|u^n_1- u_1\|_{L^2(\R)\tensor H^1(\Omega_1)}+\|u^n_2-u_2\|_{L^2(\R)\tensor H^1(\Omega_2)}\rightarrow 0,
\end{equation*}
as $n$ tends to infinity.
\end{theorem}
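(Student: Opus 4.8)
The plan is to upgrade the abstract limit furnished by \cref{lemma:prlimit} into the two claimed strong convergences. The mechanism is twofold: the monotonicity bound \cref{eq:coerl2} turns that dual pairing into an $L^2(\R)\tensor H^1(\Omega_i)$-norm of $F_i(\eta_i^n-\eta)$, and the boundedness of the trace operator then transfers this control back to the interface. The bridge between the interface iterates $\eta_i^n$ and the subdomain approximations $u_i^n$ is provided by the identities $u_i^n = F_i\eta_i^n + G_if_i$ (\cref{lemma:rrpr}) and $u_i = \restr{u}{\Omega_i\times\R} = F_i\eta + G_if_i$ (\cref{lemma:tpspeq,lemma:tranequiv}).

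First I would record the two consequences of these identities that drive the argument: subtracting gives $u_i^n - u_i = F_i(\eta_i^n-\eta)$; and, since $T_iF_i$ is the identity on $Z$ while $G_if_i \in W_i^0 = \ker T_i$ by \cref{eq:Wi0}, also $\eta_i^n - \eta = T_i(u_i^n - u_i)$. Then, noting that $\eta_i^{n+1}\in D(\mathcal{S}_i)$ by construction of the iteration and $\eta \in D(\mathcal{S}_1)\cap D(\mathcal{S}_2)$ by \cref{lemma:friedrich}, I would apply \cref{eq:coerl2} to the pair $(\eta_i^{n+1},\eta)$ and combine it with \cref{lemma:prlimit} to obtain, for $i=1,2$,
\begin{displaymath}
c\,\|F_i(\eta_i^{n+1}-\eta)\|_{L^2(\R)\tensor H^1(\Omega_i)}^2 \le (\mathcal{S}_i\eta_i^{n+1}-\mathcal{S}_i\eta,\, \eta_i^{n+1}-\eta)_{L^2(\Gamma\times\R)} \longrightarrow 0 .
\end{displaymath}
By the first identity this is exactly $\|u_i^n - u_i\|_{L^2(\R)\tensor H^1(\Omega_i)}\to 0$, which is the second assertion. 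For the interface convergence I would use the second identity: $u_i^n - u_i = F_i(\eta_i^n-\eta)$ lies in $W_i\subset L^2(\R)\tensor V_i$, and $T_i\colon L^2(\R)\tensor V_i\to L^2(\R)\tensor\Lambda$ is bounded (from $\hat T_i\colon V_i\to\Lambda$ bounded together with \cref{lemma:tensor}); since the $V_i$-norm coincides with the $H^1(\Omega_i)$-norm, this yields $\|\eta_i^n-\eta\|_{L^2(\R)\tensor\Lambda} = \|T_i(u_i^n-u_i)\|_{L^2(\R)\tensor\Lambda} \le C\|u_i^n-u_i\|_{L^2(\R)\tensor H^1(\Omega_i)} \to 0$, which is the first assertion.

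I do not expect a genuine obstacle here: the analytic substance is already contained in \cref{lemma:prlimit} (the Lions--Mercier-type argument) and in the coercive-equivalence theory of \cref{sec:SP}. The steps that demand care are the index bookkeeping --- \cref{lemma:prlimit} is stated for $\eta_i^{n+1}$ and must be invoked for both $i=1$ and $i=2$, with the harmless shift back to $\eta_i^n$ --- and the observation that \cref{eq:coerl2} only delivers convergence of $F_i(\eta_i^n-\eta)$ in the weaker $L^2(\R)\tensor H^1(\Omega_i)$-norm, not in $\|\cdot\|_{W_i}$; consequently the passage back to the $\Lambda$-topology must go through the trace operator acting on $L^2(\R)\tensor V_i$, rather than through the bounded trace $T_i\colon W_i\to Z$ of \cref{lemma:tracei}, which would require a $W_i$-convergence that is not available.
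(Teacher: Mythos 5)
Your proposal is correct and follows essentially the same route as the paper's proof: the monotonicity bound \cref{eq:coerl2} combined with the limit from \cref{lemma:prlimit} gives $\|F_i(\eta_i^n-\eta)\|_{L^2(\R)\tensor H^1(\Omega_i)}\to 0$, the identities $u_i^n-u_i=F_i(\eta_i^n-\eta)$ yield the subdomain convergence, and the boundedness of $T_i$ on $L^2(\R)\tensor V_i$ (not the $W_i\to Z$ trace) transfers this to the interface. Your remark that only the weaker $L^2(\R)\tensor H^1$ control is available, so the trace must be taken in that topology, is exactly the point the paper's chain of inequalities relies on.
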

\begin{proof}
From \cref{eq:prlimit,eq:coerl2} we obtain
\begin{align*}
            &\|\eta^n_1-\eta\|^2_{L^2(\R)\tensor \Lambda}+\|\eta^n_2-\eta\|^2_{L^2(\R)\tensor \Lambda}\\
            &\quad=\|T_1F_1(\eta^n_1-\eta)\|^2_{L^2(\R)\tensor \Lambda}+\|T_2F_2(\eta^n_2-\eta)\|^2_{L^2(\R)\tensor \Lambda}\\
            &\quad\leq C\bigl(\|F_1(\eta^n_1-\eta)\|^2_{L^2(\R)\tensor H^1(\Omega_1)}+\|F_2(\eta^n_2-\eta)\|^2_{L^2(\R)\tensor H^1(\Omega_2)}\bigr)\\
            &\quad\leq C\bigl((\mathcal{S}_1\eta^{n+1}_1-\mathcal{S}_1 \eta, \eta^{n+1}_1 -\eta)_{L^2(\Gamma\times\R)}
                +(\mathcal{S}_2\eta^n_2- \mathcal{S}_2 \eta, \eta^n_2 -\eta)_{L^2(\Gamma\times\R)}\bigr)\rightarrow 0,
\end{align*}
as $n$ tends to infinity. By~\cref{lemma:tpspeq,lemma:tranequiv,lemma:rrpr}, one has the identities
\begin{displaymath}
 (u_1, u_2)=(F_1\eta+G_1f_1, F_2\eta+G_2f_2)\quad\text{and}\quad (u_1^n, u_2^n)=(F_1\eta_1^n+G_1f_1, F_2\eta_2^n+G_2f_2).
\end{displaymath}
This together with the last bound in the limit above yields that 
\begin{align*}
            &\|u^n_1-u_1\|^2_{L^2(\R)\tensor H^1(\Omega_1)}+\|u^n_2-u_2\|^2_{L^2(\R)\tensor H^1(\Omega_2)}\\
            &\quad=\|F_1(\eta^n_1-\eta)\|^2_{L^2(\R)\tensor H^1(\Omega_1)}+\|F_2(\eta^n_2-\eta)\|^2_{L^2(\R)\tensor H^1(\Omega_2)}\rightarrow 0,
\end{align*}
as $n$ tends to infinity.
\qed
\end{proof}
\begin{remark}
    Note that the Robin--Robin method is unlikely to have a geometric convergence rate in the present continuous framework. This is indicated already in the elliptic case, by observing that the spatially discrete method has a convergence rate of the form $1-\mathcal{O}(\sqrt{h})$; see~\cite{gander06}. That is, a rate deteriorating to one as the spatial discretization parameter $h$ tends to zero.
\end{remark}

\section{Modified Robin--Robin method}\label{sec:convmod}
One can achieve a slightly stronger convergence result if one allows the evaluation of the (non-local) Hilbert transform in the method. Instead of the standard Robin--Robin method, we therefore also consider the method given by finding, for each $n=1,2,\dots$, functions $(u_1^n, u_2^n)\in W_1\times W_2$ such that
	\begin{equation}\label{eq:hrobin}
    \left\{\begin{aligned}
    	&a_1(u_1^{n+1}, v_1)=(f_1, v_1)_{L^2(\Omega_1\times\R)} & &\text{for all } v_1\in W_1^0,\\
            &a_1(u_1^{n+1}, F_1\Hc^\varphi_\Gamma\mu)-(f_1, F_1\Hc^\varphi_\Gamma\mu)_{L^2(\Omega_1\times\R)} & &\\
            &\quad+a_2(u_2^n, F_2\Hc^\varphi_\Gamma\mu)-(f_2, F_2\Hc^\varphi_\Gamma\mu)_{L^2(\Omega_2\times\R)} & &\\
            &\quad=s(T_{2}u_2^n-T_{1}u_1^{n+1}, \mu)_{L^2(\Gamma\times\R)} & &\text{for all }\mu\in Z,\\[5pt]
    	&a_2(u_2^{n+1}, v_2)=(f_2, v_2)_{L^2(\Omega_2\times\R)} & &\text{for all } v_2\in W_2^0,\\
    	&a_2(u_2^{n+1}, F_2\Hc^\varphi_\Gamma\mu)-(f_2, F_2\Hc^\varphi_\Gamma\mu)_{L^2(\Omega_2\times\R)} & &\\
    	&\quad+a_1(u_1^{n+1}, F_1\Hc^\varphi_\Gamma\mu)-(f_1, F_1\Hc^\varphi_\Gamma\mu)_{L^2(\Omega_1\times\R)} & & \\
    	&\quad=s(T_{1}u_1^{n+1}-T_{2}u_2^{n+1}, \mu)_{L^2(\Gamma\times\R)} & &\text{for all }\mu\in Z.
    \end{aligned}\right.
    \end{equation}
	Here, $u_2^0 \in W_2$ is an initial guess and $s, \varphi>0$ are suitably chosen method parameters. We refer to~\cref{eq:hrobin} as the weak formulation of the modified Robin--Robin method. Moreover, we consider the weak modified Peaceman--Rachford iteration, as finding functions $(\eta_1^n, \eta_2^n)\in Z\times Z$, for each $n=1,2,\dots$, such that
	\begin{equation}\label{eq:hpr}
	    \left\{\begin{aligned}
        &\langle(sJ+\Hc_\Gamma^{\varphi*} S_1)\eta_1^{n+1}-\Hc_\Gamma^{\varphi*}\chi_1, \mu\rangle & &\\
        &\quad=\langle(sJ-\Hc_\Gamma^{\varphi*} S_2)\eta_2^n+\Hc_\Gamma^{\varphi*}\chi_2, \mu\rangle& &\text{for all } \mu\in Z,\\
        &\langle(sJ+\Hc_\Gamma^{\varphi*} S_2)\eta_2^{n+1}-\Hc_\Gamma^{\varphi*}\chi_2, \mu\rangle & &\\
        &\quad=\langle(sJ-\Hc_\Gamma^{\varphi*} S_1)\eta_1^{n+1}+\Hc_\Gamma^{\varphi*}\chi_1, \mu\rangle & &\text{for all } \mu\in Z.
        \end{aligned}\right.
	\end{equation}
	Here, $\eta_2^0\in Z$ is an initial guess and $\Hc_\Gamma^{\varphi*}$ denotes the adjoint of $\Hc_\Gamma^\varphi$ in $Z$, i.e.
	\begin{displaymath}
	    \langle \Hc_\Gamma^{\varphi*}z, \mu\rangle=\langle z, \Hc_\Gamma^\varphi\mu\rangle.
	\end{displaymath}
	for all $z\in Z^*$ and $\mu\in Z$.
	\begin{lemma}\label{lemma:rrhpr}
        Suppose that \cref{ass:domains,ass:f} hold. Then the weak forms of the modified Robin--Robin method~\cref{eq:hrobin} and the modified Peaceman--Rachford iteration~\cref{eq:hpr} are equivalent.
    \end{lemma}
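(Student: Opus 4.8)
The plan is to mirror the proof of \cref{lemma:rrpr} almost verbatim; the only new ingredient is that the extra Hilbert-transform factor appearing on the test side of \cref{eq:hrobin} is moved onto the Steklov--Poincar\'e operators and functionals by means of the adjoint $\Hc_\Gamma^{\varphi*}$, which turns the scheme \cref{eq:hrobin} into the iteration \cref{eq:hpr}.

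First I would treat the forward implication. Let $(u_1^n,u_2^n)_{n\geq1}\subset W_1\times W_2$ solve \cref{eq:hrobin} and set $\eta_i^n=T_iu_i^n$, which lies in $Z$ by \cref{lemma:tracei}. The first and third equations of \cref{eq:hrobin} yield the identification $u_i^n=F_i\eta_i^n+G_if_i$, exactly as in \cref{lemma:rrpr}. Since $\Hc_\Gamma^\varphi\mu\in Z$ for every $\mu\in Z$, the element $F_i\Hc_\Gamma^\varphi\mu$ is a legitimate test function in $W_i$ by \cref{lemma:hilbert,lemma:Fi}, and the definitions of $S_i$ and $\chi_i$ give
\[
a_i(F_i\eta_i^n+G_if_i,\,F_i\Hc_\Gamma^\varphi\mu)-(f_i,\,F_i\Hc_\Gamma^\varphi\mu)_{L^2(\Omega_i\times\R)}
=\langle S_i\eta_i^n-\chi_i,\,\Hc_\Gamma^\varphi\mu\rangle .
\]
Applying the adjoint relation $\langle z,\Hc_\Gamma^\varphi\mu\rangle=\langle\Hc_\Gamma^{\varphi*}z,\mu\rangle$ with $z=S_i\eta_i^n-\chi_i\in Z^*$ rewrites each left-hand side as $\langle\Hc_\Gamma^{\varphi*}(S_i\eta_i^n-\chi_i),\mu\rangle$, while \cref{eq:riesz} turns the Robin coupling terms $s(T_2u_2^n-T_1u_1^{n+1},\mu)_{L^2(\Gamma\times\R)}$ and $s(T_1u_1^{n+1}-T_2u_2^{n+1},\mu)_{L^2(\Gamma\times\R)}$ into $\langle sJ(\eta_2^n-\eta_1^{n+1}),\mu\rangle$ and $\langle sJ(\eta_1^{n+1}-\eta_2^{n+1}),\mu\rangle$, respectively. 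Substituting $u_i^n=F_i\eta_i^n+G_if_i$ into the second and fourth equations of \cref{eq:hrobin} and rearranging is then exactly \cref{eq:hpr} with $\eta_2^0=T_2u_2^0$.

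For the converse I would start from a solution $(\eta_1^n,\eta_2^n)_{n\geq1}\subset Z\times Z$ of \cref{eq:hpr} and define $u_i^n=F_i\eta_i^n+G_if_i\in W_i$. Then \cref{eq:Fi,eq:Gi} show that the first and third equations of \cref{eq:hrobin} hold, and since $T_iF_i\eta=\eta$ while $G_if_i\in W_i^0$ (so $T_iG_if_i=0$) one gets $T_iu_i^n=\eta_i^n$. Running the chain of equalities from the forward part backwards then shows that the second and fourth equations of \cref{eq:hrobin} hold as well, so $(u_1^n,u_2^n)_{n\geq1}$ solves \cref{eq:hrobin} with $u_2^0=F_2\eta_2^0+G_2f_2$.

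I do not expect a serious obstacle: the argument is a routine reorganization of \cref{lemma:rrpr}. The one point that needs care is the bookkeeping of the adjoint $\Hc_\Gamma^{\varphi*}$ --- in particular, the fact that it must act on the functionals $S_i\eta$ and $\chi_i$ in $Z^*$ rather than on elements of $Z$, which is precisely why \cref{eq:hpr} is posed with $\Hc_\Gamma^{\varphi*}$ and not $\Hc_\Gamma^\varphi$. One should also observe that, unlike in \cref{eq:inter}, one cannot dispose of the factor $\Hc_\Gamma^\varphi$ by the substitution $\mu\mapsto\Hc_\Gamma^\varphi\mu$ in the test space, because the Robin coupling terms on the right-hand side of \cref{eq:hrobin} are paired with $\mu$ itself; this asymmetry is exactly what forces the use of the adjoint.
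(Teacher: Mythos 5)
Your proposal is correct and is exactly the argument the paper intends: the paper omits the proof with the remark that it is ``similar to the proof of Lemma~\ref{lemma:rrpr}'', and your write-up carries out precisely that mirroring, with the one genuinely new ingredient --- moving the test-side factor $\Hc_\Gamma^\varphi$ onto the functionals $S_i\eta_i^n-\chi_i$ via the adjoint $\Hc_\Gamma^{\varphi*}$ while leaving the Robin coupling terms, which are paired with $\mu$ itself, untouched --- handled correctly. Your closing observation about why the substitution $\mu\mapsto\Hc_\Gamma^\varphi\mu$ cannot be used to eliminate the Hilbert transform is apt and identifies the precise reason the iteration \cref{eq:hpr} must be posed with the adjoint.
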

	The proof of~\cref{lemma:rrhpr} is similar to the proof of~\cref{lemma:rrpr} and is therefore left out. As in the case of the Robin--Robin method we restrict the Steklov--Poincaré operators to $L^2(\Gamma\times\R)$. Let
	\begin{align*}
        D(\mathcal{S}_i^\varphi)&=\{\eta\in Z: \Hc_\Gamma^{\varphi*}(S_i\eta-\chi_i)\in L^2(\Gamma\times\R)^*\},\\
        D(\mathcal{S}^\varphi)&=\{\eta\in Z: \Hc_\Gamma^{\varphi*}(S\eta-\chi)\in L^2(\Gamma\times\R)^*\}
    \end{align*}
    and define the unbounded affine operators
	\begin{align*}
	    \mathcal{S}_i^\varphi&:D(\mathcal{S}_i^\varphi)\subseteq L^2(\Gamma\times\R)\rightarrow L^2(\Gamma\times\R): \eta\mapsto J^{-1}\Hc_\Gamma^{\varphi*}(S_i\eta-\chi_i),\\
	    \mathcal{S}^\varphi&:D(\mathcal{S}^\varphi)\subseteq L^2(\Gamma\times\R)\rightarrow L^2(\Gamma\times\R): \eta\mapsto J^{-1}\Hc_\Gamma^{\varphi*}(S\eta-\chi).
	\end{align*}
	The $L^2$-formulation of the modified Peaceman--Rachford iteration is then the following: For each $n=1,2,\dots$ find $(\eta_1^{n+1}, \eta_2^{n+1})\in D(\mathcal{S}_1^\varphi)\times D(\mathcal{S}_2^\varphi)$ such that
	\begin{equation}\label{eq:hprl2}
	    \left\{\begin{aligned} (sI+\mathcal{S}_1^\varphi)\eta_1^{n+1}&=(sI-\mathcal{S}_2^\varphi)\eta_2^n,\\
        (sI+\mathcal{S}_2^\varphi)\eta_2^{n+1}&=(sI-\mathcal{S}_1^\varphi)\eta_1^{n+1}.
        \end{aligned}
        \right.
	\end{equation}
	Here $\eta_2^0 \in D(\mathcal{S}_2^\varphi)$ is an initial guess. We have an analogous result to~\cref{lemma:prl2weak} so that \cref{eq:hprl2} and \cref{eq:hpr} are equivalent for $\eta_0^n\in D(\mathcal{S}_2^\varphi)$. Moreover, we have the following result on the domain $D(\mathcal{S}_i^\varphi)$:
	\begin{lemma}
	    Suppose that \cref{ass:domains,ass:f} hold. Then
	    \begin{displaymath}
	        D(\mathcal{S}_i^\varphi)=D(\mathcal{S}_i),\quad i=1,2.
	    \end{displaymath}
	    If \cref{ass:regularity} also holds and $\eta$ is the solution to \cref{eq:spl2} then $\eta\in D(\mathcal{S}_1^\varphi)\cap D(\mathcal{S}_2^\varphi)$.
	\end{lemma}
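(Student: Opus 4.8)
The plan is to reduce both claims to a single fact: the adjoint $\Hc_\Gamma^{\varphi*}$ maps the subspace $L^2(\Gamma\times\R)^*$ of $Z^*$ bijectively onto itself. Granting this, for any $\eta\in Z$ the element $S_i\eta-\chi_i$ lies in $Z^*$, and it belongs to $L^2(\Gamma\times\R)^*$ if and only if $\Hc_\Gamma^{\varphi*}(S_i\eta-\chi_i)$ does; comparing this with the definitions of $D(\mathcal{S}_i)$ and $D(\mathcal{S}_i^\varphi)$ immediately yields $D(\mathcal{S}_i^\varphi)=D(\mathcal{S}_i)$, and the same argument applied to $S$ and $\chi$ gives $D(\mathcal{S}^\varphi)=D(\mathcal{S})$.

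To establish this fact I would first observe that $\Hc_\Gamma=\hat{\Hc}\tensor I$ is, by \cref{lemma:abstracthilbert} with $s=0$ and $X=L^2(\Gamma)$, an isomorphism of $L^2(\Gamma\times\R)=L^2(\R)\tensor L^2(\Gamma)$ with $\Hc_\Gamma^{-1}=-\Hc_\Gamma$, so that $\Hc_\Gamma^2=-I$ there. Hence $\Hc_\Gamma^\varphi=\cos(\varphi)I-\sin(\varphi)\Hc_\Gamma$ is an isomorphism of $L^2(\Gamma\times\R)$ with bounded inverse $\cos(\varphi)I+\sin(\varphi)\Hc_\Gamma$, and this inverse agrees on $Z$ with the inverse of $\Hc_\Gamma^\varphi\colon Z\to Z$, since $\Hc_\Gamma^2=-I$ holds on $Z$ as well. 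Now let $w\in Z^*$. Using that $Z$ is dense in $L^2(\Gamma\times\R)$ (\cref{lemma:density}) and the Gelfand triple of \cref{eq:riesz}, the condition $w\in L^2(\Gamma\times\R)^*$ is equivalent to the bound $|\langle w,\mu\rangle|\le C\|\mu\|_{L^2(\Gamma\times\R)}$ for all $\mu\in Z$. If this holds, then $|\langle \Hc_\Gamma^{\varphi*}w,\mu\rangle|=|\langle w,\Hc_\Gamma^\varphi\mu\rangle|\le C\|\Hc_\Gamma^\varphi\mu\|_{L^2(\Gamma\times\R)}\le C'\|\mu\|_{L^2(\Gamma\times\R)}$, so $\Hc_\Gamma^{\varphi*}w\in L^2(\Gamma\times\R)^*$. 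Conversely, writing $\mu=\Hc_\Gamma^\varphi\nu$ with $\nu=(\Hc_\Gamma^\varphi)^{-1}\mu\in Z$ gives $\langle w,\mu\rangle=\langle \Hc_\Gamma^{\varphi*}w,\nu\rangle$, and since $\|\nu\|_{L^2(\Gamma\times\R)}=\|(\cos(\varphi)I+\sin(\varphi)\Hc_\Gamma)\mu\|_{L^2(\Gamma\times\R)}\le C'\|\mu\|_{L^2(\Gamma\times\R)}$, the membership $\Hc_\Gamma^{\varphi*}w\in L^2(\Gamma\times\R)^*$ forces $w\in L^2(\Gamma\times\R)^*$. This proves the fact, and hence $D(\mathcal{S}_i^\varphi)=D(\mathcal{S}_i)$, $i=1,2$.

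For the last assertion, if \cref{ass:regularity} also holds and $\eta$ solves \cref{eq:spl2}, then \cref{lemma:friedrich} gives $\eta\in D(\mathcal{S}_1)\cap D(\mathcal{S}_2)$, which by the identity just proved equals $D(\mathcal{S}_1^\varphi)\cap D(\mathcal{S}_2^\varphi)$; thus $\eta\in D(\mathcal{S}_1^\varphi)\cap D(\mathcal{S}_2^\varphi)$. The only mildly delicate point is the middle step: keeping straight how the $Z$-duality pairing interacts with the $L^2(\Gamma\times\R)$-structure of the Gelfand triple, and checking that the $Z$-inverse of $\Hc_\Gamma^\varphi$ is bounded in the $L^2(\Gamma\times\R)$-norm. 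That is routine once the rotation identity $\Hc_\Gamma^2=-I$ is in hand; everything else is bookkeeping against \cref{lemma:abstracthilbert,lemma:density,lemma:friedrich}.
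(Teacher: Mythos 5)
Your proof is correct and follows essentially the same route as the paper: the paper's argument is precisely that $\Hc_\Gamma^\varphi$ is an isomorphism of $L^2(\Gamma\times\R)$, hence $\Hc_\Gamma^{\varphi*}$ is an isomorphism of $L^2(\Gamma\times\R)^*$, which gives $D(\mathcal{S}_i^\varphi)=D(\mathcal{S}_i)$, and the final claim then follows from \cref{lemma:friedrich}. You have merely filled in the duality and density details that the paper leaves implicit.
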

	\begin{proof}
	    Since $\Hc_\Gamma^\varphi: L^2(\Gamma\times\R)\rightarrow L^2(\Gamma\times\R)$ is an isomorphism we have that $\Hc_\Gamma^{\varphi*}:L^2(\Gamma\times\R)^*\rightarrow L^2(\Gamma\times\R)^*$ also is an isomorphism. Therefore $D(\mathcal{S}_i^\varphi)=D(\mathcal{S}_i)$ and the remaining statement follows from \cref{lemma:friedrich}.
	    \qed
	\end{proof}
	The theorem below follows by the same argument as \cref{thm:rrconv}, the difference being that the operator $\Hc_\Gamma^{\varphi*}S_i$ is coercive in the stronger norm of $Z$; see~\cref{lemma:sp}. This in turn gives the stronger monotonicity property
    \begin{equation} 
        (\mathcal{S}_i^\varphi\eta-\mathcal{S}_i^\varphi\mu, \eta-\mu)_{L^2(\Gamma\times\R)}\geq 
	    c\|F_i(\eta-\mu)\|^2_Z\quad \text{ for all }\eta, \mu\in D(\mathcal{S}_i^\varphi).
    \end{equation}
	\begin{theorem}\label{thm:trrconv}
        Suppose that \cref{ass:domains,ass:f,ass:regularity} hold. Let $u$ be the solution to the weak heat equation~\cref{eq:weak} and $\eta$ be the solution to the $L^2$-Steklov--Poincaré equation~\cref{eq:spl2}. The iterates $(\eta_1^n, \eta_2^n)_{n\geq 1}$ of the $L^2$-modified Peaceman--Rachford iteration~\cref{eq:hprl2} converges to $\eta$, i.e.,
        \begin{equation*}\label{eq:tprconv}
            \|\eta^n_1-\eta\|_Z+\|\eta^n_2-\eta\|_Z\rightarrow 0,
        \end{equation*}
        as $n$ tends to infinity. Moreover, the weak modified Robin--Robin approximation\linebreak $(u_1^n, u_2^n)_{n\geq 1}$ converges to $(u_1, u_2)=(\restr{u}{\Omega_1\times\R}, \restr{u}{\Omega_2\times\R})$, i.e.,
        \begin{equation*}\label{eq:trrconv}
            \|u^n_1-u_1\|_{W_1}+\|u^n_2-u_2\|_{W_2}\rightarrow 0,
        \end{equation*}
        as $n$ tends to infinity.
    \end{theorem}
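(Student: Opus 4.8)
The plan is to transcribe the proof of \cref{thm:rrconv} almost word for word, replacing the operators $\mathcal{S}_i$ by the modified operators $\mathcal{S}_i^\varphi$ and the interface norm $\|\cdot\|_{L^2(\R)\tensor\Lambda}$ by the stronger norm $\|\cdot\|_Z$. The only genuinely new ingredient is that, whereas $a_i$ is merely coercive in $L^2(\R)\tensor H^1(\Omega_i)$, it is coercive-equivalent in $W_i$ with the isomorphism $\Hc_i^\varphi$ by \cref{lemma:ai}. Composing with the extension $F_i$ and arguing as in the proof of \cref{lemma:sp}, one obtains from \cref{eq:riesz} and the definition of $\Hc_\Gamma^{\varphi*}$ that
\begin{align*}
(\mathcal{S}_i^\varphi\eta-\mathcal{S}_i^\varphi\mu,\eta-\mu)_{L^2(\Gamma\times\R)} &= \langle S_i(\eta-\mu),\Hc_\Gamma^\varphi(\eta-\mu)\rangle \\
&= a_i\bigl(F_i(\eta-\mu),\Hc_i^\varphi F_i(\eta-\mu)\bigr) \geq c\|F_i(\eta-\mu)\|_{W_i}^2
\end{align*}
for all $\eta,\mu\in D(\mathcal{S}_i^\varphi)$, which strengthens \cref{eq:coerl2} by replacing the $L^2(\R)\tensor H^1(\Omega_i)$-norm on the right-hand side by the $W_i$-norm.

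Next I would verify the hypotheses of the abstract Peaceman--Rachford convergence result \cite[Proposition~1]{lionsmercier} (cf.\ \cite[Lemma~8.8]{EHEE22}) for the pair $(\mathcal{S}_1^\varphi,\mathcal{S}_2^\varphi)$: monotonicity is the estimate just displayed; the solution $\eta$ of \cref{eq:spl2} lies in $D(\mathcal{S}_1^\varphi)\cap D(\mathcal{S}_2^\varphi)$ by the lemma immediately preceding this theorem; and one needs $sI+\mathcal{S}_i^\varphi:D(\mathcal{S}_i^\varphi)\to L^2(\Gamma\times\R)$ to be bijective for every $s\geq 0$. For the last point, since $\Hc_\Gamma^{\varphi*}$ is an isomorphism on $Z^*$, the argument reduces, exactly as in the bijectivity proof for $sI+\mathcal{S}_i$, to showing that $sJ+\Hc_\Gamma^{\varphi*}S_i:Z\to Z^*$ is an isomorphism, which I would obtain by Lax--Milgram applied to the bounded form $(\eta,\mu)\mapsto\langle(sJ+\Hc_\Gamma^{\varphi*}S_i)\eta,\Hc_\Gamma^\varphi\mu\rangle$: the half-angle identity $\Hc_\Gamma^\varphi\Hc_\Gamma^\varphi=\Hc_\Gamma^{2\varphi}$ together with \cref{lemma:sp} (applied with parameter $2\varphi$) gives $\langle S_i\eta,\Hc_\Gamma^\varphi\Hc_\Gamma^\varphi\eta\rangle\geq c\|\eta\|_Z^2$, while the skew-adjointness of $\Hc_\Gamma$ on $L^2(\Gamma\times\R)$ gives $\langle J\eta,\Hc_\Gamma^\varphi\eta\rangle=\cos(\varphi)\|\eta\|_{L^2(\Gamma\times\R)}^2\geq 0$, so the form is coercive in $Z$ for $\varphi$ small. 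With these three facts the abstract result yields the analogue of \cref{lemma:prlimit}, namely $(\mathcal{S}_i^\varphi\eta_i^n-\mathcal{S}_i^\varphi\eta,\eta_i^n-\eta)_{L^2(\Gamma\times\R)}\to 0$ as $n\to\infty$.

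The conclusion then follows as in \cref{thm:rrconv}. Using $T_iF_i\eta=\eta$, the boundedness $T_i:W_i\to Z$ from \cref{lemma:tracei}, and the strengthened monotonicity, one gets
\begin{align*}
\|\eta_i^n-\eta\|_Z^2 = \|T_iF_i(\eta_i^n-\eta)\|_Z^2 &\leq C\|F_i(\eta_i^n-\eta)\|_{W_i}^2 \\
&\leq C\,(\mathcal{S}_i^\varphi\eta_i^n-\mathcal{S}_i^\varphi\eta,\eta_i^n-\eta)_{L^2(\Gamma\times\R)}\to 0,
\end{align*}
hence $\|\eta_1^n-\eta\|_Z+\|\eta_2^n-\eta\|_Z\to 0$. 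For the approximations themselves, \cref{lemma:rrhpr} identifies the modified Robin--Robin iterates with the modified Peaceman--Rachford iterates via $u_i^n=F_i\eta_i^n+G_if_i$, while \cref{lemma:tpspeq,lemma:tranequiv} give $\restr{u}{\Omega_i\times\R}=F_i\eta+G_if_i$; therefore $\|u_i^n-u_i\|_{W_i}=\|F_i(\eta_i^n-\eta)\|_{W_i}\to 0$ by the middle estimate above. The only step that is not a literal transcription of the proof of \cref{thm:rrconv} is checking that the Peaceman--Rachford machinery still applies to $\mathcal{S}_i^\varphi$ — concretely the bijectivity of $sI+\mathcal{S}_i^\varphi$, where one must track the composition $\Hc_\Gamma^{\varphi*}S_i$; the half-angle identity and the skew-adjointness of the Hilbert transform are the crux there, and I expect that to be the main (though minor) obstacle.
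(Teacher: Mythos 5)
Your proposal is correct and takes essentially the same route as the paper, which simply observes that the argument of \cref{thm:rrconv} carries over once the coercive-equivalency $\langle S_i\zeta,\Hc_\Gamma^\varphi\zeta\rangle=a_i(F_i\zeta,\Hc_i^\varphi F_i\zeta)\geq c\|F_i\zeta\|_{W_i}^2$ from \cref{lemma:sp} is used to upgrade the monotonicity of $\mathcal{S}_i^\varphi$. Your extra step establishing bijectivity of $sI+\mathcal{S}_i^\varphi$ via the identity $\Hc_\Gamma^\varphi\Hc_\Gamma^\varphi=\Hc_\Gamma^{2\varphi}$ and the skew-adjointness of $\Hc_\Gamma$ is a valid (and welcome) filling-in of a detail the paper leaves implicit, requiring only that $\varphi$ be chosen small enough that \cref{lemma:sp} applies with parameter $2\varphi$.
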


\bibliographystyle{plain}
\bibliography{references}

\end{document}